\definecolor{orcidlogocol}{HTML}{A6CE39}
\tikzset{
  orcidlogo/.pic={
    \fill[orcidlogocol] svg{M256,128c0,70.7-57.3,128-128,128C57.3,256,0,198.7,0,128C0,57.3,57.3,0,128,0C198.7,0,256,57.3,256,128z};
    \fill[white] svg{M86.3,186.2H70.9V79.1h15.4v48.4V186.2z}
                 svg{M108.9,79.1h41.6c39.6,0,57,28.3,57,53.6c0,27.5-21.5,53.6-56.8,53.6h-41.8V79.1z M124.3,172.4h24.5c34.9,0,42.9-26.5,42.9-39.7c0-21.5-13.7-39.7-43.7-39.7h-23.7V172.4z}
                 svg{M88.7,56.8c0,5.5-4.5,10.1-10.1,10.1c-5.6,0-10.1-4.6-10.1-10.1c0-5.6,4.5-10.1,10.1-10.1C84.2,46.7,88.7,51.3,88.7,56.8z};
  }
}
\newcommand\orcidicon[1]{\href{https://orcid.org/#1}{\mbox{\scalerel*{
\begin{tikzpicture}[yscale=-1,transform shape]
\pic{orcidlogo};
\end{tikzpicture}
}{|}}}}
\newtheorem{theorem}{Theorem}[section]
\newtheorem{lemma}[theorem]{Lemma}
\newtheorem{proposition}[theorem]{Proposition}
\theoremstyle{definition}
\newtheorem{definition}[theorem]{Definition}
\newtheorem{remark}[theorem]{Remark}
\newtheorem{example}[theorem]{Example}
\newcommand{\A}{\mathcal{A}}
\newcommand{\V}{\mathcal{V}}
\newcommand{\Wc}{\mathcal{W}}
\newcommand{\F}{\mathbb{F}}
\newcommand{\Sk}{\mathfrak{S}}
\newcommand{\As}{\mathscr{A}}
\newcommand{\Bs}{\mathscr{B}}
\newcommand{\Ds}{\mathscr{D}}
\newcommand{\G}{\mathscr{G}}
\newcommand{\ov}{\overline{v}}
\newcommand{\oc}{\overline{c}}
\newcommand{\ou}{\overline{u}}
\newcommand{\ozero}{\overline{0}}
\newcommand{\T}{\text}
\newcommand{\db}{\displaybreak[3]}
\begin{document}
\title{
New infinite families of uniformly packed near-MDS codes and multiple coverings, based on the ternary Golay code
\date{}
\thanks{The research of S. Marcugini and F. Pambianco was supported in part by the Italian
National Group for Algebraic and Geometric Structures and their Applications (GNSAGA -
INDAM) (Contract No. U-UFMBAZ-2019-000160, 11.02.2019) and by University of Perugia
(Project No. 98751: Strutture Geometriche, Combinatoria e loro Applicazioni, Base Research
Fund 2017--2019; Fighting Cybercrime with OSINT, Research Fund 2021).}
}
\maketitle
\begin{center}
{\sc Alexander A. Davydov \orcidicon{0000-0002-5827-4560}}\\
 {\sc\small Kharkevich Institute for Information Transmission Problems}\\
 {\sc\small Russian Academy of Sciences,
Moscow, 127051, Russian Federation}\\
 \emph{E-mail address:} alexander.davydov121@gmail.com\medskip\\
 {\sc Stefano Marcugini \orcidicon{0000-0002-7961-0260}and
 Fernanda Pambianco \orcidicon{0000-0001-5476-5365}}\\
 {\sc\small Department of  Mathematics  and Computer Science,  Perugia University,}\\
 {\sc\small Perugia, 06123, Italy}\\
 \emph{E-mail address:} \{stefano.marcugini, fernanda.pambianco\}@unipg.it
\end{center}

\textbf{Abstract.}
We present five new infinite families of linear near-MDS codes uniformly packed in the wide sense (UPWS). These codes are also almost perfect multiple coverings of the deep holes or farthest-off points (APMCF), i.e.\ the vectors lying at distance $R$ (covering radius) from the code. The families are constructed by $m$-lifting when one takes a starting code $C$ over the ground Galois field $\F_q$ with a parity check matrix $H(C)$ and then considers the codes $C_m$ over $F_{q^m}$, $m\ge2$, with the same parity check matrix $H(C)$. As starting codes we used the ternary perfect Golay code and codes obtained by its extension and puncturing. To prove the needed combinatorial properties (UPWS and APMCF), we used the $m$-lifting  of the dual codes and features of near-MDS codes. A general theorem on infinite families of UPWS near-MDS codes is proved.

\textbf{Keywords:} uniformly packed codes, ternary Golay code, multiple coverings, near-MDS codes

\textbf{Mathematics Subject Classification (2010).} 94B25, 94B60, 94B05.

\section{Introduction and background}\label{sec1:Intro}
Let $\F_q$ be the Galois field with $q$ elements, $\F_q^*=\F_q\setminus\{0\}$. Let $\#S$ be the cardinality of a set $S$.  Let a $t$-set  be a set  of cardinality~$t$. Let $\triangleq$ be the sign ``equality by definition''. We use lower-case letters, e.g. $v$, for scalars, and overlined lower-case letters, e.g. $\ov$, for vectors. Let $\ozero$ be the zero vector. The $n$-dimensional vector space over $\F_{q}$ is denoted by $\F_{q}^{\,n}$. Let $\F_q^{\,m\times n}$ be the set of $m \times n$ matrices with entries from $\F_q$.

 The \emph{Hamming distance} $d(\ov,\ov')$ between two vectors $\ov$, $\ov'$ is the number of coordinates in which they differ. The \emph{Hamming ball} of radius $\rho$ with center $\ov\in \F_{q}^{\,n}$ is the set $\Bs_q(\rho,n,\ov)\triangleq\{\ov'\,|\,\ov'\in \F_{q}^{\,n},~d(\ov,\ov')\leq \rho\}\subseteq\F_{q}^{\,n}$.
The \emph{Hamming weight} $\T{wt}(\ov)$ of the vector $\ov\in\F_{q}^{\,n}$ is the number of its non-zero coordinates.

A \emph{linear code} $C$ over $\F_q$ of length $n$ and dimension $k$ is a $k$-dimensional subspace of $\F_{q}^{\,n}$.  A vector $\oc\in C$ is a \emph{codeword} of $C$.  The \emph{minimum distance} (or simply \emph{distance}) $d$ of $C$ is the minimum Hamming distance between any pair of codewords. The \emph{covering radius} $R$ of $C$ is the smallest integer such that the space $\F_{q}^{\,n}$ is covered by the Hamming balls of radius $R$ centered at codewords.
Such a code $C$ is denoted by $[n,k,d]_qR$, where $d$ and $R$ can be omitted. The \emph{packing radius} of $C$ is $e(C)\triangleq\left\lfloor(d-1)/2\right\rfloor$. If $R=e(C)$, then $C$ is a \emph{perfect code}; if $R=e(C)+1$, then $C$ is a \emph{quasi-perfect code}.

The number of codewords of the code $C$ with Hamming weight $i$ is denoted by $A_i(C)$. The set $\{A_i(C)\,|\,0\le i\le n\}$ is the \emph{weight distribution} of~$C$.
The support of the set of the non-zero weights $A_i(C)$ of the code $C$ is denoted by $S(C)$, \emph{the number of the non-zero weights} in the code $C$ is denoted by $s(C)$, i.e.
\begin{equation}\label{eq1:support non-zero weights}
 S(C)\triangleq\{i>0\,|\,A_i(C)\ne0\}, ~s(C)\triangleq\#S(C).
\end{equation}

Let $C$ be an $[n, k,d]_qR$ code with codewords $\oc$. The $[n, n-k,d^\bot]_qR^\bot$ code $C^\bot$ \emph{dual} to $C$ is the set
$C^\bot\triangleq\{\ov\in\F_q^{\,n}\,|\, \langle\ov\cdot\oc\rangle=\ozero\in\F_q^{\,n}\T{ for every }\oc\in C\}$
where $\langle\ov\cdot\oc\rangle$ denotes the inner product of vectors $\ov$ and $\oc$ in $\F_q^{\,n}$;  a code $C$ is \emph{self-dual} if $C=C^\bot$. The value $s(C^\bot)$ is the \emph{external distance} of the code $C$ \cite{Delsarte}.

For a vector $\ov$ of $\F_q^{\,n}$ and a code $C$, let $d(\ov,C)\triangleq\min\limits_{\oc\in C}d(\ov,\oc)$ be the Hamming \emph{distance between $\ov$ and $C$} and let $B_j(\ov,C)$ be \emph{the number of codewords of $C$ at distance $j$ from $\ov$}.

A code $C$ \emph{punctured on position $i,j,k$} is denoted $C_{i,j,k}^*$; it can be obtained by deleting the same positions $i,j,k$ in each codeword of $C$.
The \emph{extended} $[n, k,d]_qR$ code $C$ is the $[n+1,k,\widehat{d}\,]_q\widehat{R}$ code $\widehat{C}$ obtained by adding an overall parity check digit in each codeword.

For an introduction to coding theory, see \cite{HufPless,MWS,Roth}; about covering radius, see
\cite{CHLL-bookCovCod,Handbook-coverings,Delsarte,DGMP-AMC,HufPless}; see also Section \ref{subsec21:CosetCRlift}.

\begin{definition}\label{def1:lift} \cite[Definition 4]{GCR2022IEEE-EWS}, \cite[Definition 11]{FaldWillSmallDef1997}, \cite{RiZinLiftPerf}
  Let $C$ be an $[n,k,d]_qR$ code over the ground field $\F_q$ with a parity check matrix $H(C)\subset\F_q^{\,(n-k)\times n}$.  The \emph{$m$-lifted code} $C$ is the  $[n,k]_{q^m}R_m$ code, say $C_m$, over the field $\F_{q^m}$ with covering radius $R_m$ and with \emph{the same parity check matrix $H(C)$}, $m\ge1$. We put $C_1=C$, $R_1=R$. It is possible that $R_m\ne R$ if $m\ge2$. We say that the code $C_m$ is obtained by \emph{$m$-lifting} $C$. In the literature, see e.g. \cite[Definition 11]{FaldWillSmallDef1997},  \cite {JurPelCoset2015}, the term``extension'' is used instead of ``lifting''.
\end{definition}

\begin{proposition}\label{prop1:m-lifted} \emph{\cite[Lemma 5]{GCR2022IEEE-EWS}, \cite[Proposition 12]{FaldWillSmallDef1997}, \cite[Lemma 2.2]{RiZinLiftPerf}}
 Let $C_m$ be the $[n,k]_{q^m}R_m$ code, obtained by $m$-lifting an $[n,k,d]_qR$ code $C$. Then
 for any $m\ge 1$,  the minimum distance of $C_m$ is the same as in $C$, i.e. $C_m$ is an $[n,k,d]_{q^m}R_m$ code.
\end{proposition}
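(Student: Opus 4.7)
The plan is to use the standard characterization of minimum distance in terms of the parity check matrix: for a linear code with parity check matrix $H$, the minimum distance equals the smallest number of columns of $H$ that are linearly dependent over the underlying field. Since both $C$ and $C_m$ share the parity check matrix $H(C)$ (by Definition \ref{def1:lift}), the task reduces to showing that the minimum size of an $\F_q$-linearly dependent subset of columns of $H(C)$ agrees with the minimum size of an $\F_{q^m}$-linearly dependent subset of the same columns.

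First I would dispatch the easy inequality $d(C_m)\le d(C)$: any nontrivial $\F_q$-dependence among columns of $H(C)$ is \emph{a fortiori} an $\F_{q^m}$-dependence, since $\F_q\subseteq\F_{q^m}$. Equivalently, every codeword of $C$ is a codeword of $C_m$, so the minimum weight can only decrease.

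The substantive step is the reverse inequality $d(C_m)\ge d(C)$. For this, suppose columns $h_{i_1},\ldots,h_{i_r}$ of $H(C)$ satisfy a nontrivial relation $\sum_{j=1}^{r}a_j h_{i_j}=\ozero$ with $a_j\in\F_{q^m}$ not all zero. Fix an $\F_q$-basis $\beta_1,\ldots,\beta_m$ of $\F_{q^m}$ and expand $a_j=\sum_{\ell=1}^{m}a_{j\ell}\beta_\ell$ with $a_{j\ell}\in\F_q$. Substituting and regrouping yields $\sum_{\ell=1}^{m}\bigl(\sum_{j=1}^{r}a_{j\ell}h_{i_j}\bigr)\beta_\ell=\ozero$. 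Since the inner sums have entries in $\F_q$ and the $\beta_\ell$ are $\F_q$-linearly independent, each inner sum must vanish, and because some $a_j\ne 0$ at least one coefficient vector $(a_{1\ell},\ldots,a_{r\ell})$ is nonzero. This produces a nontrivial $\F_q$-dependence among the same $r$ columns, so $d(C)\le r$, giving $d(C)\le d(C_m)$.

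I do not expect any genuine obstacle: the argument is a clean application of the fact that the rank of a matrix over $\F_q$ is invariant under scalar extension to $\F_{q^m}$. The only point that needs a little care is to avoid circularity by phrasing the proof column-wise on $H(C)$ rather than codeword-wise, since the identification of codewords with syndromes is exactly the link that preserves distance under lifting.
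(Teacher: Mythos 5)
Your argument is correct: the characterization of minimum distance as the smallest number of linearly dependent columns of a parity check matrix, the trivial inclusion $\F_q\subseteq\F_{q^m}$ for one inequality, and the expansion of $\F_{q^m}$-coefficients in an $\F_q$-basis (forcing each $\F_q$-component of the relation to vanish) for the other, together give $d(C_m)=d(C)$ with no gaps. Note, however, that the paper does not prove Proposition \ref{prop1:m-lifted} at all; it imports the statement from the cited literature. Your proof is essentially the standard one found there, phrased column-wise on $H(C)$; the cited source \cite[Lemma 2.2]{RiZinLiftPerf} in fact establishes a slightly stronger fact, recalled in Proposition \ref{prop24:m-lifted}, namely that every minimum-weight codeword of $C_m$ is an $\F_{q^m}$-scalar multiple of a minimum-weight codeword of $C$; your basis-expansion step applied to a weight-$d$ codeword of $C_m$ is the natural starting point for that refinement, but the equality of minimum distances, which is all the proposition claims, follows already from what you wrote. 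One small point worth making explicit is that $H(C)$ keeps rank $n-k$ over $\F_{q^m}$ (rank is preserved under scalar extension), so $C_m$ indeed has dimension $k$ and the column-dependence characterization applies to it.
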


There are three different concepts of uniformly packed codes. The uniformly packed
codes in the narrow sense were introduced in 1971 in \cite{SemZinovZaitsUP}. They are a subclass of uniformly packed codes introduced in 1975 in \cite{GoethTilb1975UP}. The class of uniformly packed codes in the wide sense (UPWS, for short) was introduced in 1974 in \cite{BasZaiZinUP}.
Such codes are interesting as they have good packing density and possess combinatorial properties
similar to perfect codes \cite[Introduction]{BorgRiZin2019PIT}. In this paper we are interested in the UPWS codes.

\begin{definition}\label{def1:UnPackWide} \cite{BasZaiZinUP}, \cite[Definition 6]{BorgRiZin2019PIT}, \cite[Definition 1.2]{RiZinLiftPerf}
  An $[n,k,d]_qR$ code $C$
  is \emph{uniformly packed in the wide sense} (UPWS) if there exist rational numbers $\beta_0,\beta_1,\ldots,\beta_R$ (packing parameters) such that for any vector $\ov\in\F_{q}^{\,n}$ we have $\sum_{j=0}^R\beta_j B_j(\ov,C)=1$.
\end{definition}

\begin{theorem} \label{th1:UPWS nes-suf} \emph{\cite[Theorem 1]{BasZin}, \cite[Lemma 2.2(ii)]{BorgRiZin12010}, \cite[Theorem 4(iv)]{BorgRiZin2019PIT}, \cite[Lemma 3.1]{RiZinLiftPerf}}
A code $C$ of covering radius $R(C)$ is UPWS if and only if $ R(C)=s(C^\bot)$.
\end{theorem}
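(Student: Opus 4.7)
The plan is to reduce the UPWS identity to a polynomial identity in Krawtchouk polynomials via the MacWilliams identity for coset weight enumerators, and then apply a degree count. First, I would apply MacWilliams to the coset $\ov+C$ for each $\ov\in\F_q^{\,n}$ and extract the coefficient of $x^{n-j}y^j$ to obtain
\begin{equation*}
B_j(\ov,C)=\frac{1}{|C^\bot|}\Bigl[K_j(0)+\sum_{l=1}^{s}\psi_l(\ov)\,K_j(j_l)\Bigr],
\end{equation*}
where $s=s(C^\bot)$, $S(C^\bot)=\{j_1,\ldots,j_s\}$, $K_j$ is the $j$-th Krawtchouk polynomial $K_j^{(n,q)}$, and $\psi_l(\ov)\triangleq\sum_{\ou\in C^\bot,\,\T{wt}(\ou)=j_l}\chi(\langle\ov\cdot\ou\rangle)$ for a fixed nontrivial additive character $\chi$ of $\F_q$. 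The crucial point is that the $s+1$ functions $1,\psi_1,\ldots,\psi_s$ are linearly independent on $\F_q^{\,n}$: each is a nonzero sum of the characters $\ov\mapsto\chi(\langle\ov\cdot\ou\rangle)$ indexed by pairwise disjoint nonempty subsets of $C^\bot$, and distinct additive characters of $\F_q^{\,n}$ are linearly independent.

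Next I would introduce the polynomial $P(x)\triangleq\sum_{j=0}^{R}\beta_j K_j(x)$, of degree at most $R=R(C)$, so that
\begin{equation*}
\sum_{j=0}^{R}\beta_j B_j(\ov,C)=\frac{1}{|C^\bot|}\Bigl[P(0)+\sum_{l=1}^{s}\psi_l(\ov)\,P(j_l)\Bigr].
\end{equation*}
By the linear independence above, the UPWS identity (left-hand side $\equiv 1$) is equivalent to the $s+1$ scalar equations $P(0)=|C^\bot|$ and $P(j_l)=0$ for $l=1,\ldots,s$. For the ``if'' direction, assuming $R=s$, I would take $P(x)\triangleq|C^\bot|\prod_{l=1}^{s}(x-j_l)/(-j_l)$, a polynomial of degree $s=R$ meeting the required conditions; expanding $P$ in the basis $\{K_0,\ldots,K_R\}$ of polynomials of degree $\le R$ yields rational packing parameters $\beta_0,\ldots,\beta_R$. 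For the ``only if'' direction, Delsarte's external distance bound gives $R\le s$; on the other hand, any UPWS polynomial $P$ has the $s$ distinct nonzero roots $j_1,\ldots,j_s$ while $P(0)=|C^\bot|\neq 0$, forcing $\deg P\ge s$ and hence $R\ge s$. Combining, $R=s$.

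The principal obstacle is the careful derivation of the coset MacWilliams identity and the verification that $\psi_1,\ldots,\psi_s$ are nontrivial and jointly linearly independent; everything else reduces to Lagrange interpolation and a degree count. As a side remark, the polynomial $P$ constructed in the converse direction actually proves $R\le s$ on its own (some $B_j(\ov,C)$ with $j\le s$ must be positive because the left-hand side of the UPWS identity equals $1\neq 0$), so Delsarte's bound can be recovered from the same machinery rather than invoked as an external fact.
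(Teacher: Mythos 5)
Your proposal is correct, but note that the paper does not prove this statement at all: Theorem \ref{th1:UPWS nes-suf} is imported from the literature (Bassalygo--Zinoviev \cite{BasZin}; see also \cite{BorgRiZin12010,BorgRiZin2019PIT,RiZinLiftPerf}), so there is no internal proof to compare against. What you have written is essentially a self-contained reconstruction of the classical argument behind those references: the coset MacWilliams/Krawtchouk expansion $B_j(\ov,C)=\frac{1}{|C^\bot|}\bigl[K_j(0)+\sum_{l}\psi_l(\ov)K_j(j_l)\bigr]$, the observation that $1,\psi_1,\ldots,\psi_s$ are linearly independent (valid, since they are character sums over pairwise disjoint nonempty subsets of $C^\bot$ and distinct additive characters of $\F_q^{\,n}$ are independent), the reduction of the UPWS identity to $P(0)=|C^\bot|$, $P(j_l)=0$ for the polynomial $P=\sum_{j\le R}\beta_jK_j$, Lagrange interpolation for the ``if'' part, and the degree count plus the external-distance bound for the ``only if'' part. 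All steps check out: since $\deg K_j=j$ exactly and the $K_j$ have rational coefficients, expanding the degree-$s$ interpolating polynomial in $K_0,\ldots,K_R$ does give rational packing parameters, as the definition requires. Your closing remark is also sound and is in fact the standard proof of Delsarte's bound $R\le s(C^\bot)$ (Theorem \ref{th22:cosets}(i)): applying the same identity with the degree-$s$ polynomial shows every vector has a codeword within distance $s$, so the argument can be made independent of the cited bound. The only caveat is presentational: what you give is a proof of a known theorem that the paper (legitimately) treats as a black box, not an alternative to any argument in the paper itself.
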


Codes UPWS are essentially connected with completely regular (CR, for short) codes, see for example
\cite{BasZaiZinUP,BasZin,BorgRiZin12010,BorgRiZin2019PIT,CameronvanLint,%
Delsarte,GoethTilb1975UP,NeumaierCR,%
RiZinLiftPerf,SemZinovZaitsUP,TilborgThes} and Section~\ref{subsec21:CosetCRlift}.

\begin{definition}\label{def1:multcov} \cite{BDGMP_MultCov,BDGMP_MultCovFurth,CHLL-bookCovCod,DMP-AMC-Cosets} The vectors  $\ov\in\F_q^{\,n}$ with $ d(\ov,C)=R$ are called \emph{the farthest-off points} or \emph{deep holes}.
 An $[n,k,d]_{q}R$ code $C$ is said to be an \emph{$(R,\lambda)$
multiple covering} of the farthest-off points ($(R,\lambda )$-MCF
code, for short) if for all vectors $\ov$ with $d(\ov,C)=R$ we have $B_R(\ov,C)\ge\lambda$. If for all vectors $\ov$ with $d(\ov,C)=R$
we have $B_R(\ov,C)=\lambda$ then the code $C$ is called an \emph{$(R,\lambda)$
almost perfect multiple covering} of the farthest-off points
($(R,\lambda )$-APMCF code, for short); if in addition $d\geq2R$ then the code is called
an \emph{$(R,\lambda )$  perfect multiple covering} of the farthest-off
points $((R,\lambda )$-PMCF code, for short).
The parameter $\lambda$ is called \emph{the multiplicity of covering.}
 \end{definition}

Motivations for studying MCF codes arise e.g.\ from the generalized football pool problem \cite{HamalHonkLitsOster,HamalRankGolay} and the list decoding \cite[Chapter 9]{Roth}, \cite{WeeCoLit}.

The perfect ternary Golay code $\G_{11}=[11,6,5]_32$ \cite{Golay1949} has an interesting history, see \cite[p.\ 25]{Barg}, \cite[Introduction, p.\ 85]{HamalRankGolay}, \cite{ThompsonGolay}, and references therein.

 \emph{In this work}, we consider a few ``starting'' codes, connected with the ternary Golay code $\G_{11}$, and then we examine the corresponding $m$-lifted codes for $m\ge2$ which generate infinite code families. Our investigations concentrate on combinatorial properties of the codes, first of all, whether they are UPWS  and/or $(R,\lambda)$ APMCF. As a result, we proved for five infinite families that codes in them are UPWS and also are $(R,\lambda)$ APMCF. As far as it is known to the authors, these properties of the code families have not been described in the literature. We also obtained several new sporadic results; for example a few codes were shown to be $(R,\lambda)$ PMCF. For the new results, see Theorems \ref{th41:UPWSliftGolay}, \ref{th42:UPWSliftGolay} and Section~\ref{sec6:conclusion}.

 We use the following codes as the starting ones: the ternary perfect Golay code $\G_{11}=[11,6,5]_32$; the self-dual quasi-perfect extended ternary Golay code $\G_{12}=\widehat{\G}_{11}=[12,6,6]_33$; the codes
$\G_{10}=(\G_{11})^*_{11}=[10, 6, 4]_32$, $\G_{9}=(\G_{11})^*_{11,10} =[9, 6, 3]_32$,  $\G_{8}=(\G_{11})^*_{11,10,9} =[8, 6, 2]_31$, obtained by puncturing $\G_{11}$;
and, finally, the dual codes $\G_{11}^\bot=[11,5,6]_35$, $\G_{10}^\bot=[10,4,6]_35,~\G_{9}^\bot=[9,3,6]_35,~\G_{8}^\bot=[8,2,6]_35$. Then, for $m\ge2$ we investigate the corresponding $m$-lifted codes $\G_{n,m}$, $\G_{n,m}^\bot$, $n=12,11,10,9,8$, and prove that
for the infinite families $\G_{n,m}$ the condition $ R(C)=s(C^\bot)$ of Theorem \ref{th1:UPWS nes-suf} holds if $m\ge m'(n)$ where $n-6\ge m'(n)\ge 2$. This implies that the codes of these families are UPWS. Also, we prove that an UPWS code of covering radius $R$ always is an $(R,\lambda)$ APMCF, see Theorem~\ref{th52:RUWS=APMCF}. Moreover, the codes $\G_{12},\G_{11},\G_{10},\G_{8}$ are $(R,\lambda)$ PMCF.

To check whether the equality $ R(C)=s(C^\bot)$ holds, we interpret the codes under consideration as near-MDS (NMDS) ones,
 see \cite{BoerAMDS,DingTangDesignsNMDS2020IEEE,DodunLandgNMDS1995,DodunLandgNMDS2000,FaldWillSmallDef1997,LandRousNMDS2015}. This approach turned out to be very useful. We obtained several new results, e.g. the formula of the weight distribution of an $m$-lifted NMDS code \eqref{eq3:weight distrib NMDS}, Theorem \ref{th3:NMDS-Adp1=0-UPWS} (general theorem on infinite families of UPWS NMDS codes), and so on.  The properties of NMDS codes are used in the proofs of almost all lemmas, propositions, and theorems in Sections \ref{sec3:useful} and~\ref{sec4:newUPWS}.

Many of our starting codes are well studied, but this is not the case for their $m$-lifted versions. Nevertheless, we obtained new results not only for $m$-lifted codes but also for starting ones, e.g. assertions connected with multiple coverings and the number of codewords of minimal weight $d$. The new results are summarized in
Theorems \ref{th41:UPWSliftGolay}, \ref{th42:UPWSliftGolay}, and Section \ref{sec6:conclusion}.

The paper is organized as follows. Section \ref{sec2:prelim} contains preliminaries. Sections \ref{sec3:useful} and~\ref{sec4:newUPWS} are devoted to infinite families of UPWS codes. Section \ref{sec5:MultCover} considers multiply coverings of
the farthest-off points. In Section \ref{sec6:conclusion}, conclusion and open problems are given.

\section{Preliminaries}\label{sec2:prelim}

\subsection{Code cosets, completely regular and lifted codes}\label{subsec21:CosetCRlift}

A \emph{coset} $\V(C)$ of a code $C\subset\F_q^{\,n}$ is defined as
\begin{equation}\label{eq21:coset}
\V(C)=\{\ov\in\F_q^{\,n}\,|\,\ov=\oc+\ou,\oc\in C\}=C+\ou\subset\F_q^{\,n},
\end{equation}
where $\ou\in \V(C)$ is a vector fixed for the given representation; $\#\V(C)=q^{k}$.

 The \emph{weight $\Wc(\V)$ of a coset} $\V$ is the smallest Hamming weight of any vector in~$\V$. Let $\V^{(\Wc)}$ or $\V^{(\Wc)}(C)$ be a coset of weight $\Wc$. Let $\A_w(\V)$ be the number of vectors of Hamming weight $w$ in a coset $\V$. The set $\{\A_w(\V)\,|\,0\le w\le n\}$ is the \emph{weight distribution} of a coset $\V$.
   A \emph{coset leader} is a vector in the coset having the smallest Hamming weight.
   In a coset $\V^{(\Wc)}$ the number of all the coset leaders is $\A_\Wc(\V^{(\Wc)})$.
 The code $C$  is the coset of weight zero. All $q^{n-k}$ cosets of $C$ form a partition of $\F_q^{\,n}$.

 \begin{theorem}\label{th22:cosets}
 We consider an $[n,k,d]_qR$ code $C$ and its cosets $\V(C)$. We have
\begin{description}
    \item[(i)] $\lfloor \frac{d-1}{2}\rfloor\le R\le n-k$ \emph{\cite{Delsarte},\cite[Chapter 11]{HufPless}}; \emph{\textbf{(Delsarte Bound)}} $R\le s(C^\bot)$  \emph{\cite{Delsarte}}.

  \item[(ii)]
  Each coset of $C$ of weight $s(C^\bot)$ has the same weight distribution \emph{\cite[Theorem~7.5.2]{HufPless}}.

    \item[(iii)] For all vectors $\ov\in\V(C)$, we have $B_{j}(\ov,C)=\A_j(\V(C))$, $0\le j\le n$.
 If $\ov\in\V(C)^{(\Wc)}$ then $d(\ov,C)=\Wc$.
\end{description}
 \end{theorem}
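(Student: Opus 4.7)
Because Theorem \ref{th22:cosets} bundles three classical facts with citations, the plan is to prove each clause in turn, leaning on sphere-packing and duality arguments.

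For clause (i), the plan is to split off three subbounds. The bound $\lfloor (d-1)/2 \rfloor \le R$ I would establish by picking codewords $\oc_1, \oc_2$ at distance exactly $d$ and constructing a vector $\ov$ that disagrees with $\oc_1$ in $e = \lfloor(d-1)/2\rfloor$ of the coordinates where $\oc_1$ and $\oc_2$ differ; then $d(\ov, \oc_1) = e$, and the triangle inequality forces $d(\ov, \oc) \ge d - e \ge e+1$ for every other codeword, so $R \ge d(\ov,C) = e$. The bound $R \le n-k$ I would extract syndromically: since any parity-check matrix has rank $n-k$, every syndrome equals an $\F_q$-combination of at most $n-k$ columns, producing a coset representative of weight $\le n-k$. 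The Delsarte bound $R \le s(C^\bot)$ is the deepest; I would reproduce the outer-distribution argument of \cite{Delsarte}, viewing $(B_j(\ov,C))_{j}$ as a nonnegative vector subject to Krawtchouk identities indexed by the dual support, and noting that if $R > s(C^\bot)$ then the system $B_0(\ov,C)=\cdots=B_{R-1}(\ov,C)=0$ would overdetermine these identities and force $B_R(\ov,C)=0$ as well, contradicting the existence of a deep hole.

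For clause (ii), the plan is to apply the coset MacWilliams transform: for a coset $\V = C + \ou$, the weight enumerator $\sum_w \A_w(\V) z^w$ is expressible through characters of $C^\bot$ evaluated at $\ou$. Grouping these evaluations by dual weight gives a linear system with at most $s(C^\bot)+1$ nontrivial equations in the unknowns $\A_w(\V)$. Assuming the coset weight equals $s(C^\bot)$ kills $\A_0(\V),\dots,\A_{s(C^\bot)-1}(\V)$ and leaves exactly as many unknowns as independent equations, so the remaining $\A_w(\V)$ are pinned down by quantities depending only on $C^\bot$, not on $\ou$. I expect this part to be the main obstacle, since it demands a careful check that the surviving equations are indeed independent (the Vandermonde-type argument on the support $S(C^\bot)$).

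Clause (iii) is a direct consequence of \eqref{eq21:coset}. Writing $\ov = \oc_0 + \ou$ with $\oc_0 \in C$, the map $\oc \mapsto \ov - \oc$ is a bijection $C \to \V(C)$, and $d(\ov, \oc) = \T{wt}(\ov - \oc)$, so counting codewords of $C$ at distance $j$ from $\ov$ coincides with counting vectors of weight $j$ in $\V(C)$, giving $B_j(\ov,C)=\A_j(\V(C))$. Setting $j = \Wc$, the minimum weight in $\V^{(\Wc)}(C)$, then reads off $d(\ov,C)=\Wc$. Since all three clauses are already in the literature, in practice I would spell out only as much as needed to keep the section self-contained for use in Sections~\ref{sec3:useful}--\ref{sec5:MultCover}.
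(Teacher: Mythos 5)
The paper never proves Theorem \ref{th22:cosets}: it is a preliminaries statement, with (i) and (ii) quoted from \cite{Delsarte} and \cite[Theorem 7.5.2]{HufPless} and (iii) left as immediate from the coset definition \eqref{eq21:coset}, so your proposal is a reconstruction of the cited classical arguments rather than an alternative to an in-paper proof, and in outline it is sound. Your clause (iii) is exactly the intended argument ($\oc\mapsto\ov-\oc$ is a bijection of $C$ onto $\V(C)$ with $d(\ov,\oc)=\T{wt}(\ov-\oc)$), and both elementary bounds in (i) are correct: the vector altered on $e=\lfloor(d-1)/2\rfloor$ of the positions where a minimum-distance pair differs, plus the triangle inequality, gives $R\ge e$, and the syndrome argument gives a representative of weight at most $n-k$ in every coset. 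For the Delsarte bound and for (ii) you sketch the standard outer-distribution/Krawtchouk argument; if you write it out, the bookkeeping is cleaner with the $s(C^\bot)$ character sums $\beta_i=\sum\chi(\langle\ou\cdot\overline{x}\rangle)$, taken over dual codewords $\overline{x}$ of weight $i\in S(C^\bot)$ (together with $\beta_0=1$), as the unknowns rather than the $\A_w(\V)$: the inverse transform expresses every $\A_w(\V)$ linearly in these with coefficients depending only on $n,q,w,i$, so for a coset of weight $s(C^\bot)$ the conditions $\A_0=\dots=\A_{s(C^\bot)-1}=0$ form a square system that is nonsingular by precisely the Vandermonde-type argument on the distinct nodes of $S(C^\bot)$ that you flag, determining the $\beta_i$ and hence the whole distribution independently of $\ou$, which is (ii); for a putative coset of weight exceeding $s(C^\bot)$, adjoining $\A_{s(C^\bot)}=0$ makes the system force all $\beta_i=0$, contradicting $\beta_0=1$ (equivalently, forcing every $\A_w=0$, impossible for a nonempty coset) --- this is the rigorous form of your ``overdetermined, hence $B_R(\ov,C)=0$'' step. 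These are presentation refinements, not gaps: the only nontrivial point is the nonsingularity on $S(C^\bot)$, which you correctly identify, and it serves both (i) and (ii) at once, exactly as in the sources the paper cites.
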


Let $C(i)$ be the set of all vectors of $\F_q^{\,n}$ at distance $i$ from a code $C$ \cite{BasZaiZinUP,BorgRiZin2019PIT,NeumaierCR}. We have $C(0)=C$. Also, $C(i)$ can be viewed as the union of all weight $i$ cosets, i.e.
\begin{equation}\label{eq22:C(i)}
  C(i)\triangleq\{\ov\in\F_q^{\,n}\,|\,d(\ov,C)=i\}=\bigcup_{\Wc=i}\V^{(\Wc)}(C),~i=0,1,2,\ldots,R.
\end{equation}

  \begin{definition} \label{def22:t regular CR}
  \begin{description}
    \item[(i)] \cite{GoethTilb1975UP} A code $C$ of covering radius $R$ is $t$-regular ($0 \le t \le R$) if for all $\ov$  with $d(\ov,C) \le t$ and all $j = 0, \ldots, R$,
the value $B_j(\ov,C)$ depends only on $j$ and on $d(\ov,C)$. In other words, a code $C$ of covering radius $R$ is \emph{$t$-regular} if for $0\le\Wc\le t$, all the cosets of $C$ of weight $\Wc$ have the same weight distribution, i.e. all the cosets included in a set $C(i)$, $0\le i\le t$, have the same weight distribution.
    \item[(ii)]  \cite{Delsarte} A code $C$  of covering radius $R$ is \emph{completely regular} (CR), if
it is $R$-regular.
  \end{description}
  \end{definition}

\begin{proposition}\label{prop23:examplesCR}
\begin{description}
  \item[(i)] \emph{\cite[Lemma 2.2]{BorgRiZin12010},\cite[Corollary 2]{BorgRiZin2019PIT}} If a code is CR, then it is UPWS.

  \item[(ii)] \emph{\cite[Section 5.12, p.\ 36]{BorgRiZin2019PIT},\cite{Delsarte}} The dual code of any linear two-weight code is a CR (and hence, UPWS) code with covering radius $R=2$ .

  \item[(iii)] \emph{\cite[Section 5.1]{BorgRiZin2019PIT}} The Golay code $\G_{11}=[11,6,5]_32$, the punctured Golay code $\G_{10}=[10,6,4]_32$, the extended Golay code $\G_{12}=[12,6,6]_33$ are CR (and hence, UPWS).

  \item[(iv)] \emph{\cite[Section 5.10]{BorgRiZin2019PIT}}  Let $H_m^q$ be a parity check matrix (up to monomial equivalence) of the $[n_m,n_m-m,3]_q1$ Hamming code with $n_m=(q^m-1)/(q-1)$. Let $C$ be an $[\ell n_m,\ell n_m-m,d(C)]_qR(C)$ code with a parity check matrix $H=\underbrace{H_m^q\ldots H_m^q}_{\ell\T{ times}}$, $\ell\ge2$. Then $d(C)=2$, $R(C)=1$, and $C$  is a CR (and hence, UPWS) code.
  \end{description}
\end{proposition}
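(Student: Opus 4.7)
The plan is to treat the four items one at a time, using the cited sources as formal references while pointing out the structural reason each assertion is true.

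For part~(i), I would argue directly from Definitions~\ref{def1:UnPackWide} and~\ref{def22:t regular CR}. If $C$ is $R$-regular, then for each $i\in\{0,1,\ldots,R\}$ all cosets of weight $i$ share a common weight distribution; by Theorem~\ref{th22:cosets}(iii) this yields well-defined numbers $b_{i,j}$ with $B_j(\ov,C)=b_{i,j}$ for every $\ov$ with $d(\ov,C)=i$. A coset leader of weight $i$ has no codeword closer than distance $i$, so $b_{i,j}=0$ for $j<i$, while $b_{i,i}\ge 1$. Hence the $(R+1)\times(R+1)$ matrix $(b_{i,j})_{0\le i,j\le R}$ is upper-triangular with nonzero diagonal, and is invertible over $\mathbb{Q}$. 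The linear system $\sum_{j=0}^R \beta_j b_{i,j}=1$, $i=0,\ldots,R$, then admits a unique rational solution $(\beta_0,\ldots,\beta_R)$, which are the packing parameters required by Definition~\ref{def1:UnPackWide}.

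For part~(ii), I would combine the identity $s((C^\bot)^\bot)=s(C)=2$ with Theorem~\ref{th22:cosets}(i) and Theorem~\ref{th1:UPWS nes-suf}: the Delsarte bound yields $R(C^\bot)\le 2$ and, excluding the degenerate case, equality holds, so UPWS follows at once. To upgrade to complete regularity, I would invoke Delsarte's observation that a linear code whose covering radius equals its external distance and whose outer-distribution rows are forced by the external weight enumerator is CR, which applies here because the two-weight code $C$ has only two admissible rows.

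For part~(iii), since part~(i) and Theorem~\ref{th1:UPWS nes-suf} are at hand, the task reduces to a short calculation for each of $\G_{12},\G_{11},\G_{10}$: write down the (classical) weight distributions of $\G_{11}^\bot$, of the self-dual $\G_{12}$, and of $\G_{10}^\bot$; check that $R=s(C^\bot)$; and verify that for each admissible coset weight the distribution is uniquely determined by the MacWilliams-type identities, giving CR. For part~(iv), reading off the block structure $H=[H_m^q\,|\,\cdots\,|\,H_m^q]$ one sees immediately that $d=2$ (two identical columns appear) and $R=1$ (every syndrome is either $\ozero$ or a column of $H_m^q$); $1$-regularity then follows from the transitive action of the column-permutation group of $H$, induced by $\mathrm{GL}(m,q)$ on each block together with $\Sk_\ell$ permuting the blocks, which maps any weight-$1$ coset to any other.

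The main obstacle I expect is in part~(iv): passing from UPWS to full $1$-regularity is not automatic, because two weight-$1$ cosets of $C$ could a priori have different weight distributions, and ruling this out requires exploiting the block-symmetry of $H$ in detail. Parts~(i)--(iii) are essentially bookkeeping once Theorems~\ref{th1:UPWS nes-suf} and~\ref{th22:cosets} are available, so the references in the statement can be followed almost verbatim.
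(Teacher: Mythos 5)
The paper offers no proof of Proposition \ref{prop23:examplesCR} at all: all four items are imported from the cited literature, so your argument is necessarily a reconstruction rather than a parallel of anything in the text. With that said, your part (i) is complete and correct (the triangular-system argument is the standard proof of the cited fact), and your sketches for (iii) and (iv) follow workable lines: for (iii), the cosets of weight $s(C^\bot)$ are handled by Theorem \ref{th22:cosets}(ii), and for the lower coset weights the distribution is pinned down by its entries $\A_0,\dots,\A_{s-1}$ together with the fact that the large minimum distance forces each such coset to contain exactly one leader (e.g.\ two weight-$2$ vectors in a coset of $\G_{12}$ would differ by a codeword of weight at most $4<6$); for (iv), the $\mathrm{GL}(m,q)$ action you invoke does lift to monomial automorphisms of $C$ (if $AH_m^q=H_m^qQ$ with $Q$ monomial, the block-diagonal map built from $Q$ preserves $C$ and sends the coset with syndrome $\overline{s}$ to the one with syndrome $A\overline{s}$), and transitivity on nonzero syndromes gives $1$-regularity, closing the obstacle you flagged; the $\Sk_\ell$ part is unnecessary.

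The genuine gap is in (ii). You never establish $R(C^\bot)=2$: the Delsarte bound only gives $R(C^\bot)\le2$, and waving away $R(C^\bot)\le1$ as ``the degenerate case'' hides exactly where a hypothesis is needed. In fact, for two-weight codes with repeated generator columns the claim fails outright: the $[5,2]_3$ code whose generator columns are the four points of $\PG(1,3)$ with one point doubled has weights $\{3,4\}$, yet its dual has covering radius $1$ and is neither CR nor UPWS (its weight-$1$ cosets have $\A_1$ equal to $2$ or $1$). So the argument must use projectivity of the two-weight code, i.e.\ $d(C^\bot)\ge3$, which is the implicit setting of \cite{Delsarte} and \cite{BorgRiZin2019PIT}: with no repeated or zero columns, $R(C^\bot)=1$ would force the columns to be all points of $\PG(k-1,q)$, making $C$ a one-weight simplex code, a contradiction, whence $R=2$. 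Likewise your upgrade from UPWS to CR rests on a vaguely stated ``observation''; the actual step is that a weight-$1$ coset of $C^\bot$ has its distribution determined by $(\A_0,\A_1)$, and $\A_1=1$ for every such coset precisely because $d(C^\bot)\ge3$ forbids two weight-$1$ vectors in one coset; without that input your ``only two admissible rows'' remark does not deliver $1$-regularity. Adding the projectivity hypothesis, the $R=2$ argument, and this coset-leader count would repair part (ii).
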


\begin{proposition}\label{prop24:m-lifted} \emph{\cite{BorgRiZin12010,RiZinLiftPerf}}
 Let $C_m$ be the $[n,k,d]_{q^m}R_m$ code over the field
$\F_{q^m}$, obtained by $m$-lifting an $[n,k,d]_qR$ code $C$ over $\F_q$, see Definition \emph{\ref{def1:lift}}. Then
$  C_m=C+\alpha C+\alpha^2 C+\ldots+\alpha^{m-1} C,$
where $\alpha$ is a primitive element of $\F_{q^m}$ \emph{\cite[Proposition 2.1]{RiZinLiftPerf}}. Also, for any
codeword $\oc\in C_m$ of weight $d$, there exists a codeword
$\oc'\in C$ of weight  $d$ such that $\oc=\beta\oc'$, where $\beta\in\F_{q^m}^*$ \emph{\cite[Lemma 2.2]{RiZinLiftPerf}}.
Finally, the code $C_m$ is self-dual $(C_m=C_m^\bot)$ if and only if $C$  is self-dual \emph{\cite[Lemma 3.6]{BorgRiZin12010}}.
\end{proposition}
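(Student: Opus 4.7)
The entire statement rests on one elementary fact: because $\alpha\in\F_{q^m}$ is primitive, the set $\{1,\alpha,\alpha^2,\ldots,\alpha^{m-1}\}$ is an $\F_q$-basis of $\F_{q^m}$. I plan to exploit this basis coordinate-wise throughout all three parts, and this reduces each claim to a short linear-algebra calculation built on the fact that $H(C)$ has entries in the subfield~$\F_q$.

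For the first assertion, I write an arbitrary $\ov\in\F_{q^m}^{\,n}$ uniquely as $\ov=\sum_{i=0}^{m-1}\alpha^i\ov_i$ with $\ov_i\in\F_q^{\,n}$. Since $H(C)$ has $\F_q$-entries, $H(C)\ov^{\,T}=\sum_{i=0}^{m-1}\alpha^i H(C)\ov_i^{\,T}$, where each coefficient $H(C)\ov_i^{\,T}$ lies in $\F_q^{\,n-k}$. The $\F_q$-linear independence of $\{1,\alpha,\ldots,\alpha^{m-1}\}$ then forces every $H(C)\ov_i^{\,T}$ to vanish, i.e.\ every $\ov_i\in C$. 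Conversely, any $\sum\alpha^i\ov_i$ with $\ov_i\in C$ is annihilated by $H(C)$, yielding $C_m=C+\alpha C+\cdots+\alpha^{m-1}C$.

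For the second assertion, let $\oc\in C_m$ have weight $d$ and decompose $\oc=\sum_{i=0}^{m-1}\alpha^i\oc_i$ with $\oc_i\in C$. At any coordinate $j$ outside $\T{supp}(\oc)$, the vanishing $\sum_i\alpha^i(\oc_i)_j=0$ combined with $\F_q$-linear independence of the powers of $\alpha$ forces $(\oc_i)_j=0$ for every $i$; hence each $\oc_i$ has support inside $\T{supp}(\oc)$, a set of size~$d$. Since $d$ is the minimum distance of $C$, every nonzero $\oc_i$ has weight exactly $d$ with support equal to $\T{supp}(\oc)$. I then invoke the standard rigidity lemma for minimum-weight codewords: if $\oc^{\,\prime},\oc^{\,\prime\prime}\in C$ share the same support $S$ with $\#S=d$, then for any fixed $j\in S$ the codeword $(\oc^{\,\prime\prime})_j\oc^{\,\prime}-(\oc^{\,\prime})_j\oc^{\,\prime\prime}\in C$ vanishes at position $j$, hence has weight $<d$, so is $\ozero$; thus $\oc^{\,\prime\prime}$ is an $\F_q$-scalar multiple of $\oc^{\,\prime}$. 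Fixing one nonzero $\oc^{\,\prime}\in C$ of weight $d$ with support $\T{supp}(\oc)$, I obtain $\gamma_i\in\F_q$ with $\oc_i=\gamma_i\oc^{\,\prime}$, whence $\oc=\bigl(\sum_i\alpha^i\gamma_i\bigr)\oc^{\,\prime}=\beta\oc^{\,\prime}$ with $\beta\in\F_{q^m}^{\,*}$.

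For the third assertion, pick a generator matrix $G\in\F_q^{\,k\times n}$ of $C$. Its rows lie in $C\subseteq C_m$ by part~(1), and the rank of $G$ is $k$ over $\F_q$ and hence also over $\F_{q^m}$; since $\dim_{\F_{q^m}}C_m=k$ by Definition~\ref{def1:lift}, the same $G$ is a generator matrix of $C_m$ over $\F_{q^m}$. Self-duality of $C_m$ amounts to $GG^{\,T}=0$ over $\F_{q^m}$, but the Gram matrix $GG^{\,T}$ has entries in $\F_q$, so this equality is equivalent to $GG^{\,T}=0$ over $\F_q$, i.e.\ to self-duality of $C$ (the dimension condition $n=2k$ is packaged into either statement). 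The only mildly non-routine ingredient in the whole proof is the rigidity lemma invoked in part~(2); everything else is a direct basis-expansion argument over the extension $\F_q\subseteq\F_{q^m}$, so I expect no serious obstacle.
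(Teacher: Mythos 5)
Your proof is correct. Note, however, that the paper itself does not prove Proposition \ref{prop24:m-lifted} at all: all three assertions are quoted from the literature (\cite[Proposition 2.1]{RiZinLiftPerf}, \cite[Lemma 2.2]{RiZinLiftPerf}, \cite[Lemma 3.6]{BorgRiZin12010}), so your self-contained argument is doing genuinely more work than the text requires. Your route is the natural one and each step checks out: since a primitive $\alpha$ generates $\F_{q^m}$ over $\F_q$, the powers $1,\alpha,\ldots,\alpha^{m-1}$ do form an $\F_q$-basis, and because $H(C)$ has subfield entries the syndrome of $\ov=\sum_i\alpha^i\ov_i$ splits coordinate-wise, giving the decomposition $C_m=C+\alpha C+\cdots+\alpha^{m-1}C$; the support argument plus the rigidity lemma for minimum-weight codewords sharing a common $d$-set support correctly yields $\oc=\beta\oc'$ (and your implicit choice of $\oc'$ as a nonzero component $\oc_i$ is legitimate, since some $\oc_i\ne\ozero$ and it then has support exactly $\T{supp}(\oc)$); and the self-duality equivalence is sound because $G$ remains a generator matrix of $C_m$ (rank is preserved under field extension and $\dim_{\F_{q^m}}C_m=k$), the Gram matrix $GG^{\,T}$ lies in $\F_q^{\,k\times k}$ so its vanishing is field-independent, and the dimension condition $n=2k$ is common to both statements. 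The only stylistic difference from the cited sources is that \cite{RiZinLiftPerf} phrases the first two facts for lifted perfect codes and \cite{BorgRiZin12010} the third in its own setting, whereas your argument proves them uniformly for arbitrary linear codes directly from Definition \ref{def1:lift}; that generality is exactly what the present paper needs, so nothing is lost.
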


\subsection{Near-MDS (NMDS) codes}\label{subsec22:nearMDS}
NMDS codes have been introduced in 1995 in \cite{DodunLandgNMDS1995}, see also \cite{BoerAMDS,DingTangDesignsNMDS2020IEEE,DodunLandgNMDS2000,FaldWillSmallDef1997,LandRousNMDS2015}, \cite[p. 363]{Roth}, and   references therein. In \cite{FaldWillSmallDef1997}, NMDS codes are called "dually AMDS Codes".

\begin{definition} \label{def25:nearMDS} \cite[Lemma 3.1, Corollary 3.3]{DodunLandgNMDS1995}, \cite[Definitions 2.2, 2.4]{DodunLandgNMDS2000}
 A linear $[n,k,n-k]_q$  code $C$ is an NMDS code if and only if  its dual $C^\bot$ is an $[n,n-k,k]_q$ code. In other words, a linear $[n,k,d]_q$  code $C$ is an NMDS code if and only if $d(C)+d(C^\bot)=n$.
\end{definition}

\begin{theorem}\label{th25:NMDS}
 \begin{description}
\item[(i)] \emph{\cite[Lemma 3.2]{DodunLandgNMDS1995}} A code dual to an NMDS one is NMDS as well.

 \item[(ii)] \emph{\cite[Theorem 3.4]{DodunLandgNMDS1995}, \cite[Theorem 2.5]{DodunLandgNMDS2000}}
 If $n>k + q$, every $[n,k,n-k]_q$ code is NMDS.

\item[(iii)] \emph{\cite[Proposition 5.3, Proof]{DodunLandgNMDS1995}} Let $C$ be an $[n,k,n-k]_q$ NMDS code with  a generator matrix $G(C)$. Then removing a column from $G(C)$
    with preserving a set of $k+1$ columns which contain $k$ linearly dependent columns
    gives a generator matrix of an $[n-1,k,n-k-1]_q$ NMDS code.

\item[(iv)]  \emph{\cite[Proposition 14]{FaldWillSmallDef1997}}
The number of codewords of minimum weight in an NMDS code $C$ and its dual $C^\bot$ are equal.

   \item[(v)]
Let $C$ be an $[n,k,d]_q=[n,k,n-k]_q$ NMDS code  with $d=n-k$, $A_d(C)=A_{n-k}(C)$. For the codeword weights of $C$ we have the following.
\begin{align}\label{eq25:weight distrib NMDS}
& \bullet~A_{d+s}(C)=\binom{n}{k-s}\sum_{j=0}^{s-1}(-1)^j\binom{d+s}{j}(q^{s-j}-1)+(-1)^s\binom{k}{s}A_{d}(C),\db\\
&\phantom{\bullet~~}s\ge1 \T{\emph{\cite[Theorem 4.1]{DodunLandgNMDS1995}, \cite[Corollary 15]{FaldWillSmallDef1997}}}.\notag\db\\
&\bullet~A_d(C)\le A^{\max}_{d}(n,k,q)\triangleq\binom{n}{k-1}\frac{q-1}{k}\T{ with  equality} \label{eq25:UpBndAd NMDS}\db\\
&\phantom{\bullet~~}A_{d}(C)=A^{\max}_{d}(n,k,q) \T{ if and only if }A_{d+1}(C) = 0\T{ \emph{\cite[Corollary 4.2]{DodunLandgNMDS1995}, \cite{LandRousNMDS2015}}}.\notag\db\\
&\bullet~A_{d}(C)=\binom{n}{d-2}(q^2-1)\binom{d}{2}^{-1} \T{ if }  n=2q+d,~d\ge q+1;\label{eq25:AdNMDS}\db\\
&\phantom{\bullet~~}A_{d}(C)=\binom{n}{d-2}q(q-1)\binom{d}{2}^{-1} \T{ if }  n=2q+d-1,~d\ge q+1 \emph{\cite[Lemma 2]{BoerAMDS}}.\notag\db\\
&\bullet~0\le A_{d+2}(C)\le\frac{q-1}{2}\binom{n}{k-2}(2q+k-n) \emph{\cite[Proposition 5.1, Proof]{DodunLandgNMDS1995}}.\label{eq25:Adp2 NMDS}\db\\
&\bullet~\T{If $n=2k$, the weight distributions of $C$ and $C^\bot$ coincide} \emph{\cite[Remark, p.\ 37]{DodunLandgNMDS1995}}.\notag
\end{align}

 \item[(vi)] \emph{\cite[Proposition 5.1]{DodunLandgNMDS1995}, \cite[Theorem 2.7, pp.\ 58,59]{DodunLandgNMDS2000}} Let $m'(k,q)$ be the maximum possible length of an NMDS code of dimension $k$ over the field $\F_q$. We have
     $ m'(k,q)\le2q+k$; an $[2q+k,k,2q]_q$ NMDS code with $n=2q+k$ is called
        \emph{extremal}, it has $A_{d+1}=A_{2q+1}=0$;
       $m'(2,q)=2q+2$; $m'(3+t,3)=9+t,~t=0,1,2,3$.
     Moreover, the NMDS codes realizing the values $m'(2,q)$ and $m'(3+t,3)$ are unique up to equivalence; for $m'(2,3)$ this is the $[8,2,6]_3$ code, for $m'(3+t,3)$ this is the $[9,3,6]_3$ code  and its successive extensions $[10,4,6]_3$, $[11,5 ,6]_3$ (the dual to ternary Golay code), $[12,6,6]_3$ (the extended ternary Golay code).

 \item[(vii)]  \emph{\cite[Proposition 23]{FaldWillSmallDef1997}}
Let $C$ be an $[n,k,d]_q$ NMDS code over $\F_q$ with no codewords
of weight $d+1$. If $C_m$ is the $m$-lifted code over $\F_{q^m}$ as in Definition \emph{\ref{def1:lift}}, then $C_m$ is an
$[n,k,d]_{q^m}$ NMDS  code with no codewords of weight $d+1$ as well.
 \end{description}
\end{theorem}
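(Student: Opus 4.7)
The plan is to establish the statement in two parts: (a) $C_m$ is NMDS with parameters $[n,k,d]_{q^m}$, and (b) $A_{d+1}(C_m) = 0$.

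For (a), I would first identify the dual of the lifted code. By Definition~\ref{def1:lift}, $C_m$ has parity-check matrix $H(C)$, so $(C_m)^\bot$ equals the $\F_{q^m}$-span of the rows of $H(C)$. Since $H(C)$ is also a generator matrix of $C^\bot$ over $\F_q$, the same definition identifies this $\F_{q^m}$-span with the $m$-lifted code $(C^\bot)_m$. Hence $(C_m)^\bot = (C^\bot)_m$. Because $C$ is NMDS, $C^\bot$ is NMDS by Theorem~\ref{th25:NMDS}(i) with $d(C^\bot) = k$. Proposition~\ref{prop1:m-lifted} applied to $C^\bot$ then yields $d((C_m)^\bot) = d((C^\bot)_m) = k$. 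Combined with $d(C_m) = n-k$ from Proposition~\ref{prop1:m-lifted} applied to $C$, the NMDS criterion $d(C_m) + d((C_m)^\bot) = n$ of Definition~\ref{def25:nearMDS} is satisfied.

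For (b), I would specialise the weight-distribution formula \eqref{eq25:weight distrib NMDS}, which now also applies to the NMDS code $C_m$, to $s=1$, obtaining
\[ A_{d+1}(C_m) = \binom{n}{k-1}(q^m - 1) - k\, A_d(C_m). \]
Thus $A_{d+1}(C_m) = 0$ reduces to the identity $A_d(C_m) = \binom{n}{k-1}(q^m - 1)/k$. Since the same formula over $\F_q$ together with the hypothesis $A_{d+1}(C) = 0$ forces $A_d(C) = \binom{n}{k-1}(q-1)/k$, it suffices to prove the clean relation
\[ A_d(C_m) = \frac{q^m - 1}{q - 1}\, A_d(C). \]

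The key computation uses the second part of Proposition~\ref{prop24:m-lifted}: every weight-$d$ codeword $\oc \in C_m$ equals $\beta\oc'$ for some $\oc' \in C$ of weight $d$ and $\beta \in \F_{q^m}^*$. Hence the weight-$d$ codewords of $C_m$ decompose into $\F_{q^m}^*$-orbits of size $q^m - 1$, one per one-dimensional $\F_{q^m}$-subspace of $C_m$ meeting the minimum-weight set. The delicate point -- which I expect to be the only non-routine step -- is showing that distinct one-dimensional $\F_q$-subspaces of $C$ through weight-$d$ codewords produce distinct $\F_{q^m}$-lines in $C_m$: if $\oc'' = \lambda \oc'$ with $\oc',\oc'' \in C \subset \F_q^n$ of weight $d$ and $\lambda \in \F_{q^m}^*$, then comparing any coordinate $i$ with $\oc'_i \neq 0$ forces $\lambda = \oc''_i/\oc'_i \in \F_q^*$. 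Thus the $A_d(C)/(q-1)$ $\F_q$-lines of minimum-weight vectors in $C$ give exactly that many $\F_{q^m}$-lines in $C_m$, and multiplying by the orbit size $q^m - 1$ yields the displayed relation. Substituting into the specialised weight formula gives $A_{d+1}(C_m) = 0$, completing the plan.
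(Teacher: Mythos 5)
The statement you were asked about is a seven-part compendium of known facts on NMDS codes, and the paper offers no proof of any of it: items (i)--(vii) are all quoted from the literature (\cite{DodunLandgNMDS1995}, \cite{DodunLandgNMDS2000}, \cite{FaldWillSmallDef1997}, \cite{BoerAMDS}, \cite{LandRousNMDS2015}). Your proposal addresses only item (vii), and within that argument you take items (i) and (v) of the very same theorem (the $s=1$ instance of \eqref{eq25:weight distrib NMDS} and the bound \eqref{eq25:UpBndAd NMDS}) as given. So, measured against the full statement, the proposal is incomplete: parts (i)--(vi) are simply not touched, and nothing you write supplies them.

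For item (vii) itself, your argument is correct, and it is in substance the self-contained route the paper develops later in Section \ref{sec3:useful} instead of relying on the cited proof of \cite[Proposition 23]{FaldWillSmallDef1997}: your identity $(C_m)^\bot=(C^\bot)_m$ is Lemma \ref{lem3:obvious1}; your step (a) is Lemma \ref{lem3:NMDS}; your count $A_d(C_m)=\frac{q^m-1}{q-1}A_d(C)$, including the key observation that a scalar $\lambda\in\F_{q^m}^*$ relating two weight-$d$ vectors of $\F_q^{\,n}$ must already lie in $\F_q^*$, is exactly Lemma \ref{lem3:minimWeight}; and the substitution into the $s=1$ weight formula to force $A_{d+1}(C_m)=0$ is \eqref{eq3:Adp1(Cm)=0} of Theorem \ref{th3:LiftWeightDistr}. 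One small point deserves tightening: since Definition \ref{def1:lift} defines lifting via a parity check matrix, $(C^\bot)_m$ is by definition the kernel over $\F_{q^m}$ of a generator matrix $G(C)$ of $C$, so to identify it with the $\F_{q^m}$-row span of $H(C)$ you should add the one-line rank/dimension argument ($H(C)$ keeps rank $n-k$ over $\F_{q^m}$ and its row span lies in that kernel, which also has dimension $n-k$). With that remark your proof of (vii) stands; the remaining six items of the theorem, however, are left entirely to the citations.
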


\subsection{Lower bounds on the number of minimum weight codewords}\label{subsec23:minweight}
In \cite{DavTombWordsMinWeght}, the results are given for non-linear codes. In Theorem~\ref{th23:Ad low bnds}, we give these results on linear codes language. In \cite{DavTombWordsMinWeght}, $b_d(n,M,q)$  denotes the number of codewords couples at distance $d$ from each other; we replace it by $A_{d}(n,k,q)$.  Also, in the notations  of the type $\Psi_{d}(n,M,q)$ from \cite{DavTombWordsMinWeght}, we change  $M$ by $k$. In the rest of cases, we write $q^k$ instead of the code cardinality $M$.

Let $ V_{q}(\rho,n)$ be the volume of the Hamming ball $\Bs_q(\rho,n,\ov)$, see Section \ref{sec1:Intro}. The \emph{surface} of this ball is the set $\{\ov'\,|\,\ov'\in F_{q}^{\,n},~d(\ov,\ov')= \rho\}\subset\F_{q}^{\,n}$; its size is denoted by $\Sk_{q}(\rho,n)$. It is well-known that for any $\ov$ we have
\begin{equation*}
  V_{q}(\rho,n)=\sum_{i=0}^\rho\binom{n}{i}(q-1)^i,~
  \Sk_{q}(\rho,n)=\binom{n}{\rho}(q-1)^\rho.
\end{equation*}
\begin{theorem} \label{th23:Ad low bnds}  \emph{\cite{DavTombWordsMinWeght}}
\textbf{\emph{(i)}} Let $C$ be an $[n,k,d]_qR$ code over $\F_q$ of length $n$, cardinality $q^k$, packing radius $e=\lfloor(d-1)/2\rfloor$, minimum distance $d$, covering radius $R$, with $A_d(C)$ codewords of weight $d$.
Let $A_{d}(n,k,q)$ be $\min\limits_C A_d(C)$. The following holds.
\begin{align}
&d=2e+2,~A_{2e+2}(n,k,q)\ge\Psi_{2e+2}(n,k,q)\triangleq\frac{\Sk_{q}(e+1,n)}{\binom{2e+2}{e+1}}
\left(\frac{q^k\Sk_{q}(e+1,n)}{q^{n}-q^kV_{q}(e,n)}-1\right)\label{eq23:A2ep2UP};\db\\
&\T{it is achieved if and only if }C\T{ is a quasi-perfect UPWS code} \emph{\cite[Theorem 2]{DavTombWordsMinWeght}}.\notag\db\\
&d=2e+1,A_{2e+1}(n,k,q)\ge n\Psi_{2(e-1)+2}(n-1,k,q)(2e+1)^{-1}\emph{\cite[Corollaries 1,2]{DavTombWordsMinWeght}}.\label{eq23:A2ep2ReducOdd}\db\\
&d=2e+1,~A_{2e+1}(n,k,q)\ge\Psi_{2e+1}^{(J)}(n,k,q)\triangleq\label{eq23:A2ep1J}\db\\
&\left(\Sk_q(e+1,n)-\left(\frac{q^n}{q^k}-V_q(e,n)\right)
\left\lfloor\frac{n(q-1)}{e+1}\right\rfloor\right)\binom{2e+1}{e}^{-1}\emph{\cite[Theorem 4]{DavTombWordsMinWeght}}.\notag\db\\
&d=2e+2,~A_{2e+2}(n,k,q)\ge n\Psi_{2e+1}^{(J)}(n-1,k,q)(2e+2)^{-1}\emph{\cite[Corollary 4]{DavTombWordsMinWeght}}.\label{eq23:A2ep2JRed}
\end{align}
\textbf{\emph{(ii)}} The $[12,6,6]_3$ extended Golay code and all the codes obtained by puncturing it to an $[8,6,2]_3$ code achieve the bounds \eqref{eq23:A2ep2UP} and \eqref{eq23:A2ep2ReducOdd} \emph{\cite[p.\ 11]{DavTombWordsMinWeght}}.
\end{theorem}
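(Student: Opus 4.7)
The plan is to establish the four lower bounds in part (i) by double counting incidences between codewords of minimum weight and vectors in thin Hamming shells around the code, and then to verify part (ii) via the equality case of (2.3) applied directly to the Golay chain.

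For the flagship even-distance bound (2.3) with $d = 2e+2$, I would focus on the set $W$ of weight-$(e+1)$ vectors. Two structural observations organize the argument. First, because $d - e = e + 2 > e + 1$, the balls of radius $e$ around codewords are disjoint and contain no element of $W$, so every $\ov \in W$ satisfies $d(\ov, C) = e + 1$. Second, by the triangle inequality, any $\oc \in C$ with $d(\oc, \ov) = e + 1$ and $\ov \in W$ must have weight at most $2(e+1) = d$, so $\oc$ is either $\ozero$ or a minimum-weight codeword. Double counting then yields
\[
\sum_{\ov \in W} B_{e+1}(\ov, C) \;=\; \Sk_q(e+1, n) \;+\; A_d(C) \binom{2e+2}{e+1}.
\]
The left-hand side is bounded below by combining the global identity $\sum_{\ov \in \F_q^n} B_{e+1}(\ov, C) = q^k \Sk_q(e+1, n)$ (which, for $d = 2e+2$, is supported entirely on vectors at distance $e+1$ from $C$) with the bound $q^n - q^k V_q(e, n)$ on the size of that deep-hole stratum, giving a pointwise average multiplicity of $q^k \Sk_q(e+1, n)/(q^n - q^k V_q(e, n))$. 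Rearranging produces (2.3), and the equality case reduces precisely to the demand that every deep hole see the same number of codewords at distance $e+1$, i.e. to the quasi-perfect UPWS criterion of Theorem \ref{th1:UPWS nes-suf}.

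The odd-distance bound (2.5) proceeds along the same incidence lines, now invoking the Johnson-type estimate $\lfloor n(q-1)/(e+1)\rfloor$ on the number of weight-$(e+1)$ vectors sharing a common support. The two reduction bounds (2.4) and (2.6) then follow by applying the already-established inequality at distance $d-1$ to each of the $n$ length-$(n-1)$ punctures $C_i^*$ of $C$ and summing through the identity $\sum_{i=1}^n A_{d-1}(C_i^*) = d A_d(C)$, which redistributes minimum-weight codewords uniformly over coordinate deletions and produces the factor $n/d$ in front.

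For part (ii), I would verify attainment of (2.3) and (2.4) through the equality criterion just derived. The extended ternary Golay code $\G_{12} = [12,6,6]_3 3$ is completely regular by Proposition \ref{prop23:examplesCR}(iii), hence UPWS, and quasi-perfect since $R = 3 = e+1$ for $d = 6$; therefore (2.3) is tight for it. Each successive puncturing down to the $[8,6,2]_3$ code inherits the UPWS quasi-perfect structure — the upper stages by Proposition \ref{prop23:examplesCR}(iii), the final one by Proposition \ref{prop23:examplesCR}(iv) applied to the repeated-Hamming parity check matrix of $[8,6,2]_3$ — so (2.3) remains tight at every even-$d$ link of the chain, and (2.4) automatically inherits tightness at the odd-$d$ intermediate codes through the same puncturing reduction by which it was derived. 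The subtlest step in the whole plan is the passage from the global incidence average to the pointwise lower bound on the sum over $W$; for non-UPWS codes this demands a careful convexity analysis of how weight-$(e+1)$ vectors distribute across cosets, and it is exactly this analysis that exposes the UPWS hypothesis as the extremal case.
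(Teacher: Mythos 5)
First, note that the paper itself does not prove this theorem: it is imported verbatim (translated to linear-code language) from Davydov--Tombak \cite{DavTombWordsMinWeght}, with each inequality attributed to a specific theorem or corollary there. So your attempt can only be measured against that known argument, and as written it has a genuine gap at the heart of \eqref{eq23:A2ep2UP}. Your double-counting identity over the weight-$(e+1)$ sphere $W$, namely $\sum_{\ov\in W}B_{e+1}(\ov,C)=\Sk_q(e+1,n)+A_d(C)\binom{2e+2}{e+1}$, is correct, but the next step is a non sequitur: the global identity $\sum_{\ov}B_{e+1}(\ov,C)=q^k\Sk_q(e+1,n)$ together with $\#\{\ov: d(\ov,C)=e+1\}\le q^n-q^kV_q(e,n)$ only bounds the \emph{average} of $B_{e+1}$ over the whole deep-hole stratum; it does not bound the average over the subset $W$, so ``rearranging produces \eqref{eq23:A2ep2UP}'' does not follow. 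You acknowledge this yourself by deferring ``the subtlest step'' to an unspecified convexity analysis --- but that analysis \emph{is} the proof. The missing ingredient is the translation-invariance identity $q^k\sum_{\ov\in W}B_{e+1}(\ov,C)=\sum_{\ov:\,d(\ov,C)=e+1}B_{e+1}(\ov,C)^2$ (every deep hole is a codeword translate of a weight-$(e+1)$ vector, counted with multiplicity $B_{e+1}$), after which Cauchy--Schwarz and the stratum-size bound give \eqref{eq23:A2ep2UP}; the equality analysis then requires \emph{both} $R=e+1$ (quasi-perfectness, i.e.\ the stratum bound is tight) \emph{and} constancy of $B_{e+1}$ on deep holes (which yields UPWS), whereas you state only the second condition.

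Two further points. For \eqref{eq23:A2ep1J} you misidentify the role of $\lfloor n(q-1)/(e+1)\rfloor$: it is not the number of weight-$(e+1)$ vectors ``sharing a common support'' (that number is $(q-1)^{e+1}$), but an upper bound on the number of weight-$(e+1)$ coset leaders in a single coset: since $d=2e+1$, two such leaders differ by a codeword, which forces their supports to meet in at most one position with different values there, so the leaders occupy pairwise disjoint (position, nonzero value) incidences, of which only $n(q-1)$ exist; combining this with the exact count $A_d(C)\binom{2e+1}{e}$ of weight-$(e+1)$ vectors lying within distance $e$ of a minimum-weight codeword gives \eqref{eq23:A2ep1J}. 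Your puncturing reductions for \eqref{eq23:A2ep2ReducOdd} and \eqref{eq23:A2ep2JRed} via $\sum_{i}A_{d-1}(C_i^*)=d\,A_d(C)$ are sound. In part (ii), attainment of \eqref{eq23:A2ep2UP} for the even-distance members of the Golay chain via the equality criterion is the right idea, but the claim that the odd-distance members ($\G_{11}$, $\G_9$) ``automatically'' attain \eqref{eq23:A2ep2ReducOdd} needs an argument that \emph{every} coordinate puncture --- not just the one chosen in the chain --- attains \eqref{eq23:A2ep2UP} (e.g.\ coordinate transitivity of the automorphism group, or a direct comparison of the known $A_d$ with the numerical bound); as written this is asserted rather than proved.
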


\section{Some useful relations}\label{sec3:useful}
We consider linear $[n,k,d]_qR$ codes $C$ and the corresponding $m$-lifted $[n,k,d]_{q^m}R_m$ codes in accordance with Definition \ref{def1:lift}  and Proposition \ref{prop1:m-lifted}.

The following two lemmas are obvious.
\begin{lemma}\label{lem3:obvious1}
Let $C$ be a linear code, $C^\bot$ be its dual, and $C_m$ and $(C^\bot)_m$ be the corresponding  $m$-lifted codes. Then
 \begin{equation*}
   (C_m)^\bot=(C^\bot)_m,~ ((C^\bot)_m)^\bot= C_m.
 \end{equation*}
 \end{lemma}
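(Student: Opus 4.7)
The plan is to unpack both sides of each equality using the definitions, and reduce everything to the fact that an $\F_{q^m}$-matrix's row space (respectively, right kernel) is determined by its rows regardless of which ground field the entries originally came from. Concretely, $C_m$ is defined via the parity check matrix $H(C)$, and a generator matrix $G(C)$ of $C$ can simultaneously be viewed as a parity check matrix of $C^\bot$. So lifting $C^\bot$ means solving $G(C)v^T = \ozero$ over $\F_{q^m}$.

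First I would observe that $\T{rank}_{\F_{q^m}}(G(C)) = \T{rank}_{\F_{q}}(G(C)) = k$ (matrix rank is invariant under field extension), so $G(C)$ has the ``right'' number of independent rows when regarded over $\F_{q^m}$; the same remark applies to $H(C)$. This ensures the dimensions come out as expected and there is no degeneracy when passing from $\F_q$ to $\F_{q^m}$.

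Next I would use Proposition \ref{prop24:m-lifted}: the lifted code satisfies $C_m = C + \alpha C + \cdots + \alpha^{m-1} C$ with $\alpha$ a primitive element of $\F_{q^m}$. This means that the $\F_{q^m}$-row span of $G(C)$ is exactly $C_m$. Therefore
\begin{equation*}
(C^\bot)_m = \{\ov \in \F_{q^m}^{\,n} : G(C)\ov^T = \ozero\} = \{\ov \in \F_{q^m}^{\,n} : \langle \ov \cdot \oc\rangle = 0 \T{ for all } \oc \in C_m\} = (C_m)^\bot,
\end{equation*}
which gives the first equality. The second equality then follows by applying the first to $C^\bot$ in the role of $C$ and using $(C^\bot)^\bot = C$, so that $((C^\bot)_m)^\bot = ((C^\bot)^\bot)_m = C_m$.

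I do not anticipate any real obstacle here; the only subtle point is ensuring that the ``parity check matrix'' of a lifted code is well-defined as a matrix over $\F_q$ (not just up to a change of basis over $\F_{q^m}$), which is guaranteed by Definition \ref{def1:lift} that fixes $H(C)$ as the defining matrix. Everything else is a tautology once one observes that $G(C)$ plays the dual role of parity check matrix for $C^\bot$.
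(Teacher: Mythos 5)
Your proposal is correct. The paper offers no argument at all for this lemma (it is dismissed as ``obvious''), and your justification is essentially the canonical one the authors have in mind: by Proposition \ref{prop24:m-lifted}, $C_m$ is the $\F_{q^m}$-row span of $G(C)$, while $G(C)$ serves as a parity check matrix of $C^\bot$, so $(C^\bot)_m=\{\ov\in\F_{q^m}^{\,n}\,|\,G(C)\ov^{T}=\ozero\}=(C_m)^\bot$, and the second identity follows by duality. Your remark on rank invariance under field extension also quietly settles the one point worth checking, namely that the lift does not depend on which parity check matrix (equivalently, which generator matrix of the dual) is chosen, since any two such matrices have the same $\F_q$-row space and hence the same $\F_{q^m}$-row space and null space.
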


\begin{lemma}\label{lem3:obvious2}
   Let $C$ be an $[n,k,d]_qR$ code and let $C_m$ be the corresponding  $m$-lifted $[n,k,d]_{q^m}R_m$ code. Let $S(C_{m})$ and $s(C_{m})$ be as in~\eqref{eq1:support non-zero weights}. Then we have the following:
 \begin{align*}
     &S(C_{m})\subseteq S(C_{m+1});~ S(C_{1})\subseteq S(C_{2})\subseteq S(C_{3})\subseteq\ldots\subseteq
     \{d,d+1,\ldots,n\};\db\\
   &   s(C_{m})\le s(C_{m+1});~s(C_{1})\le s(C_{2})\le s(C_{3})\ldots\le  n-d+1.
 \end{align*}

  If $S(C_{m^*})=\{d,d+1,\ldots,n\}$ then, for all $m\ge m^*$, we have $S(C_{m})=\{d,d+1,\ldots,n\}$, $s(C_{m})=n-d+1$.

  Let $A_{d+1}(C_m)=0$ for all $m\ge m^*$. Let $S(C_{m^*})=\{d,d+2,d+3,\ldots,n\}$. Then, for all $m\ge m^*$, we have $S(C_{m})=\{d,d+2,d+3,\ldots,n\}$, $s(C_{m})=n-d$.
 \end{lemma}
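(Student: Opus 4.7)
The plan is to reduce the whole lemma to the single nontrivial inclusion $S(C_m)\subseteq S(C_{m+1})$; the four displayed assertions then follow by bookkeeping together with Proposition~\ref{prop1:m-lifted}, which forces every nonzero weight in $S(C_m)$ to lie in $\{d,d+1,\ldots,n\}$.

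The main step I would carry out uses the decomposition given in Proposition~\ref{prop24:m-lifted}. Given $w\in S(C_m)$, write a weight-$w$ codeword as $\oc=\oc_0+\alpha\oc_1+\cdots+\alpha^{m-1}\oc_{m-1}$ with $\oc_i\in C$ and $\alpha$ a primitive element of $\F_{q^m}$. Since $\{1,\alpha,\ldots,\alpha^{m-1}\}$ is an $\F_q$-basis of $\F_{q^m}$ and every coordinate $(\oc_i)_j$ lies in $\F_q$, the $j$-th coordinate of $\oc$ vanishes iff $(\oc_i)_j=0$ for every $i$, so $w=\bigl|\bigcup_{i}\T{supp}(\oc_i)\bigr|$. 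Now pick a primitive $\alpha'\in\F_{q^{m+1}}$ and observe that $1,\alpha',\ldots,(\alpha')^{m-1}$ are $\F_q$-linearly independent, because $[\F_{q^{m+1}}:\F_q]=m+1>m$. Then $\oc'=\oc_0+\alpha'\oc_1+\cdots+(\alpha')^{m-1}\oc_{m-1}$ lies in $C_{m+1}$ (since $H(C)(\oc')^T=\ozero$ follows from $H(C)\oc_i^T=\ozero$), and the same linear-independence argument yields $\T{wt}(\oc')=w$. Thus $w\in S(C_{m+1})$.

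Everything else is then immediate. Passing to cardinalities gives the chain $s(C_m)\le s(C_{m+1})\le n-d+1$. The third and fourth bullets are simple sandwich arguments: if $S(C_{m^*})=\{d,d+1,\ldots,n\}$, then for every $m\ge m^*$ we have $\{d,d+1,\ldots,n\}=S(C_{m^*})\subseteq S(C_m)\subseteq\{d,d+1,\ldots,n\}$, so equality holds and $s(C_m)=n-d+1$; for the fourth bullet the hypothesis $A_{d+1}(C_m)=0$ excludes $d+1$ from $S(C_m)$, and the analogous sandwich $\{d,d+2,\ldots,n\}\subseteq S(C_m)\subseteq\{d,d+2,\ldots,n\}$ closes the argument.

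The only step I expect to require any thought is the mild field-theoretic subtlety that $\F_{q^m}$ is \emph{not} in general a subfield of $\F_{q^{m+1}}$ (indeed this fails whenever $m\ge2$), so one cannot simply reinterpret $\oc\in C_m$ as an element of $C_{m+1}$. The linear-independence trick above is precisely what bridges this gap, and I expect it is why the authors kept the statement explicit even while flagging it as essentially obvious.
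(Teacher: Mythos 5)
Your proof is correct: the paper itself gives no argument for this lemma (it is declared obvious immediately before its statement), and your use of the decomposition $C_m=C+\alpha C+\cdots+\alpha^{m-1}C$ from Proposition~\ref{prop24:m-lifted}, together with the $\F_q$-linear independence of $1,\alpha',\ldots,(\alpha')^{m-1}$ in $\F_{q^{m+1}}$, is exactly the natural way to justify the one genuinely non-trivial point, namely $S(C_m)\subseteq S(C_{m+1})$ despite $\F_{q^m}\not\subseteq\F_{q^{m+1}}$ for $m\ge2$. The remaining inclusions and the two sandwich arguments are handled correctly, so the proposal fully establishes the lemma.
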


\begin{lemma}\label{lem3:NMDS}
  Let $C$ be an $[n,k,n-k]_q$ NMDS code, $C^\bot$ be its dual $[n,n-k,k]_q$ NMDS code, and let $C_m$ and $(C^\bot)_m$ be the corresponding $m$-lifted $[n,k,n-k]_{q^m}$ and $[n,n-k,k]_{q^m}$ codes. Then $C_m$ and $(C^\bot)_m$ are NMDS codes, too.
\end{lemma}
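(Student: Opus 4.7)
The plan is to reduce the lemma directly to Definition~\ref{def25:nearMDS}, using only two already-established facts: that $m$-lifting preserves the minimum distance (Proposition~\ref{prop1:m-lifted}) and that $m$-lifting commutes with taking the dual (Lemma~\ref{lem3:obvious1}). So the proof is essentially a one-line computation of $d(C_m)+d((C_m)^\bot)$, and there is no serious obstacle; the only thing to check is that all objects in sight have the expected dimensions and lengths.

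First I would record the parameters. By hypothesis $C$ is an $[n,k,n-k]_q$ NMDS code, so by Definition~\ref{def25:nearMDS} its dual $C^\bot$ is an $[n,n-k,k]_q$ code. Applying Proposition~\ref{prop1:m-lifted} to $C$ gives that $C_m$ is an $[n,k,n-k]_{q^m}$ code, and applying it to $C^\bot$ gives that $(C^\bot)_m$ is an $[n,n-k,k]_{q^m}$ code. In particular, both minimum distances survive the $m$-lifting.

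Next I would invoke Lemma~\ref{lem3:obvious1} to identify $(C_m)^\bot=(C^\bot)_m$. Combining this with the previous step,
\begin{equation*}
 d(C_m)+d((C_m)^\bot)=d(C_m)+d((C^\bot)_m)=(n-k)+k=n,
\end{equation*}
so by Definition~\ref{def25:nearMDS} the code $C_m$ is NMDS. The same argument applied to $C^\bot$ in place of $C$ (using Theorem~\ref{th25:NMDS}(i) to know $C^\bot$ is itself NMDS) yields that $(C^\bot)_m$ is NMDS as well; alternatively, once $C_m$ is shown to be NMDS, Theorem~\ref{th25:NMDS}(i) applied to $C_m$ together with $(C_m)^\bot=(C^\bot)_m$ gives the same conclusion. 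Either route is immediate, which is why this lemma functions mainly as a bookkeeping statement that will be cited freely in the subsequent sections on UPWS families.
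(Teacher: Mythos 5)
Your proof is correct and follows essentially the same route as the paper: the paper's proof is a one-line citation of Definition~\ref{def1:lift}, Definition~\ref{def25:nearMDS}, and Propositions~\ref{prop1:m-lifted}, \ref{prop24:m-lifted}, i.e.\ exactly the reduction you spell out (distance is preserved by $m$-lifting, duality commutes with lifting via Lemma~\ref{lem3:obvious1}, then apply the NMDS criterion $d(C_m)+d((C_m)^\bot)=n$). Nothing is missing; your version is simply more explicit than the paper's.
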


\begin{proof}
 The assertion follows from Definitions \ref{def1:lift}, \ref{def25:nearMDS}, and Propositions \ref{prop1:m-lifted}, \ref{prop24:m-lifted}.
\end{proof}

\begin{lemma}\label{lem3:minimWeight}
  Let $C$ be a linear $[n,k,d]_q$ code and let $C_m$ be the corresponding $m$-lifted code $[n,k,d]_{q^m}$, $m\ge1$, $C_1=C$, in accordance with Definition~\emph{\ref{def1:lift}} and Proposition~\emph{\ref{prop1:m-lifted}}. Let $A_d(C)$ and $A_d(C_m)$ be the number of  codewords of minimum weight~$d$ in the codes $C$ and $C_m$, respectively. Then $q-1$ divides $A_d(C)$ and
    \begin{equation}\label{eq3:minimWeight}
A_d(C_m)=\frac{A_d(C)}{q-1}(q^m-1),~m\ge1.
  \end{equation}
\end{lemma}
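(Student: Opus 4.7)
The plan is to use Proposition \ref{prop24:m-lifted} (the second assertion: every minimum-weight codeword of $C_m$ is of the form $\beta\oc'$ for some $\oc'\in C$ of weight $d$ and some $\beta\in\F_{q^m}^*$) and to count minimum-weight codewords by grouping them into lines through the origin.

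First, I would observe that since $C$ and $C_m$ share the same parity check matrix $H(C)$, every codeword $\oc'\in C\subset\F_q^{\,n}\subset\F_{q^m}^{\,n}$ also satisfies $H(C)\,\oc'^{\,T}=\ozero$ over $\F_{q^m}$, hence $\oc'\in C_m$. In particular, every minimum-weight codeword of $C$ is a minimum-weight codeword of $C_m$ (the minimum distance being preserved by Proposition \ref{prop1:m-lifted}).

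Next, I would set up the line-counting. Let $\mathscr{L}(C)$ denote the set of one-dimensional $\F_q$-subspaces (``lines'') spanned by minimum-weight codewords of $C$, and analogously $\mathscr{L}(C_m)$ for $\F_{q^m}$-subspaces in $C_m$. Each $\F_q$-line contains exactly $q-1$ nonzero (minimum-weight) vectors, so $A_d(C)=(q-1)\,\#\mathscr{L}(C)$; in particular $(q-1)\mid A_d(C)$. Similarly $A_d(C_m)=(q^m-1)\,\#\mathscr{L}(C_m)$. It therefore suffices to prove $\#\mathscr{L}(C)=\#\mathscr{L}(C_m)$.

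Define the map $\Phi:\mathscr{L}(C)\to\mathscr{L}(C_m)$ sending an $\F_q$-line $\F_q\oc'$ to the $\F_{q^m}$-line $\F_{q^m}\oc'$. Surjectivity is exactly Proposition \ref{prop24:m-lifted}: given any minimum-weight $\oc\in C_m$, write $\oc=\beta\oc'$ with $\oc'\in C$ of weight $d$; then $\F_{q^m}\oc=\F_{q^m}\oc'=\Phi(\F_q\oc')$. For injectivity, suppose $\oc'_1,\oc'_2\in C$ are minimum-weight codewords with $\F_{q^m}\oc'_1=\F_{q^m}\oc'_2$, so $\oc'_1=\beta\oc'_2$ for some $\beta\in\F_{q^m}^*$. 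Picking any coordinate $i$ with $(\oc'_2)_i\ne 0$, the relation $(\oc'_1)_i=\beta(\oc'_2)_i$ forces $\beta=(\oc'_1)_i(\oc'_2)_i^{-1}\in\F_q$, so $\F_q\oc'_1=\F_q\oc'_2$. Thus $\Phi$ is a bijection, $\#\mathscr{L}(C_m)=\#\mathscr{L}(C)=A_d(C)/(q-1)$, and the desired formula \eqref{eq3:minimWeight} follows.

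The only nontrivial point is the surjectivity of $\Phi$, but it is exactly the content of Proposition \ref{prop24:m-lifted} and hence requires no further work; everything else is a direct counting argument. Note that the formula is consistent with $m=1$, where $C_1=C$ and the right-hand side reduces to $A_d(C)$.
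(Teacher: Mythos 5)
Your proof is correct and follows essentially the same route as the paper: the paper likewise partitions the weight-$d$ codewords of $C$ into $A_d(C)/(q-1)$ scalar classes (your $\F_q$-lines) and invokes Proposition \ref{prop24:m-lifted} to write every weight-$d$ codeword of $C_m$ as $\beta\oc'$ with $\oc'$ a class representative and $\beta\in\F_{q^m}^*$. The only difference is that you spell out the injectivity of the line correspondence (the coordinate argument forcing $\beta\in\F_q$), which the paper leaves implicit.
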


\begin{proof}
It is well known that if $\oc\in C$, $\T{wt}(\oc)=w$, then for any $\beta\in\F_q^*$, we have $\beta\oc\in C$, $\T{wt}(\beta\oc)=w$, that implies: $q-1$ divides $A_i(C)$ for any $i$. Now the assertions follow from Proposition~\ref{prop24:m-lifted}. In particular,
 the $A_d(C)$-set of weight $d$ codewords of $C$ can be partitioned into $A_d(C)/(q-1)$ disjoint $(q-1)$-subsets so that weight $d$ codewords $\oc_1,\oc_2$ belong to the same subset if and only if $\oc_1=\gamma\oc_2$ with $\gamma\in\F_q$. We form the $A_d(C)/(q-1)$-set $\As_d(C)$ containing one codeword from every subset. Finally, every
weight $d$ codeword of $C_m$ can be obtained as $\oc\beta$ with $\oc\in\As_d(C)$, $\beta\in\F_{q^m}^*$.
\end{proof}

\begin{theorem}\label{th3:LiftWeightDistr}
  Let $C$ be a linear $[n,k,d]_q=[n,k,n-k]_q$ NMDS code with $d=n-k$ and let $C_m$ be the corresponding NMDS $m$-lifted code $[n,k,d]_{q^m}=[n,k,n-k]_{q^m}$, $m\ge1$, $C_1=C$, in accordance with Definition~\emph{\ref{def1:lift}} and Proposition~\emph{\ref{prop1:m-lifted}}. Let $A_d(C)$ and $A_d(C_m)$ be the number of  codewords of minimum weight~$d$ in the codes $C$ and $C_m$, respectively. Then, for  $m\ge1$, $s\ge1$, the weight distribution of $C_m$ is as follows
    \begin{align}\label{eq3:weight distrib NMDS}
&A_{d+s}(C_m)=\binom{n}{k-s}\sum_{j=0}^{s-1}(-1)^j\binom{d+s}{j}(q^{m(s-j)}-1)+(-1)^s\binom{k}{s}\frac{A_d(C)}{q-1}(q^m-1).
\end{align}
In addition to above, let $A^{\max}_{d}(n,k,q)\triangleq \binom{n}{k-1}\frac{q-1}{k}$ as in \eqref{eq25:UpBndAd NMDS} and let $A_d(C)=A_{n-k}(C)= A^{\max}_{d}(n,k,q)$ that implies  $A_{d+1}(C)=0$. Then
  \begin{equation}\label{eq3:Adp1(Cm)=0}
    A_d(C_m)=A^{\max}_{d}(n,k,q^m)= \binom{n}{k-1}\frac{q^m-1}{k},~A_{d+1}(C_m)=0,~m\ge1.
    \end{equation}
\end{theorem}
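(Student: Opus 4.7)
The plan is to reduce this to two earlier results: the NMDS weight-distribution formula (2.5) in Theorem~\ref{th25:NMDS}(v), applied to the $m$-lifted code, and Lemma~\ref{lem3:minimWeight}, which gives the count of minimum-weight codewords after lifting.

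First I would verify that $C_m$ is indeed NMDS over $\F_{q^m}$: this is immediate from Lemma~\ref{lem3:NMDS}, since $C_m$ has the same length $n$, same dimension $k$, and same minimum distance $d=n-k$ as $C$ by Proposition~\ref{prop1:m-lifted}. Thus $C_m$ is an $[n,k,n-k]_{q^m}$ NMDS code and formula \eqref{eq25:weight distrib NMDS} applies to it with $q$ replaced by $q^m$:
\begin{equation*}
A_{d+s}(C_m)=\binom{n}{k-s}\sum_{j=0}^{s-1}(-1)^j\binom{d+s}{j}(q^{m(s-j)}-1)+(-1)^s\binom{k}{s}A_{d}(C_m).
\end{equation*}
Next I would substitute the value of $A_d(C_m)$ given by Lemma~\ref{lem3:minimWeight}, namely $A_d(C_m)=\tfrac{A_d(C)}{q-1}(q^m-1)$. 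This substitution yields exactly \eqref{eq3:weight distrib NMDS}, proving the first part.

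For the second part, assume $A_d(C)=A^{\max}_d(n,k,q)=\binom{n}{k-1}\tfrac{q-1}{k}$. Plugging this into the formula of Lemma~\ref{lem3:minimWeight} gives
\begin{equation*}
A_d(C_m)=\frac{1}{q-1}\binom{n}{k-1}\frac{q-1}{k}(q^m-1)=\binom{n}{k-1}\frac{q^m-1}{k}=A^{\max}_d(n,k,q^m),
\end{equation*}
so $A_d(C_m)$ meets the NMDS upper bound \eqref{eq25:UpBndAd NMDS} over $\F_{q^m}$. The equivalence in \eqref{eq25:UpBndAd NMDS} then forces $A_{d+1}(C_m)=0$. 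Alternatively, since the hypothesis $A_d(C)=A^{\max}_d(n,k,q)$ already implies $A_{d+1}(C)=0$ by \eqref{eq25:UpBndAd NMDS}, one can invoke Theorem~\ref{th25:NMDS}(vii) directly to conclude $A_{d+1}(C_m)=0$ for every $m\ge1$.

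There is no substantive obstacle here: once Lemmas~\ref{lem3:NMDS} and~\ref{lem3:minimWeight} are in place, the theorem is a direct plug-in of the classical NMDS weight-distribution formula together with the simple observation that lifting multiplies the count of minimum-weight codewords by $\tfrac{q^m-1}{q-1}$. The only point to be careful about is that the formula \eqref{eq25:weight distrib NMDS} is stated over a general ground field, so one must be explicit that it is being invoked with ground field $\F_{q^m}$ rather than $\F_q$; otherwise the calculation is routine.
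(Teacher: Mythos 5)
Your proposal is correct and follows essentially the same route as the paper's own (very terse) proof: apply the NMDS weight-distribution formula of Theorem~\ref{th25:NMDS}(v) to the lifted code $C_m$ over $\F_{q^m}$ (legitimate by Lemma~\ref{lem3:NMDS}), substitute $A_d(C_m)=\frac{A_d(C)}{q-1}(q^m-1)$ from Lemma~\ref{lem3:minimWeight}, and obtain $A_{d+1}(C_m)=0$ from the equality case of \eqref{eq25:UpBndAd NMDS} together with Theorem~\ref{th25:NMDS}(vii). Your write-up simply spells out the substitutions the paper leaves implicit; no discrepancy.
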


\begin{proof}
 The assertions follow from Lemmas \ref{lem3:NMDS}, \ref{lem3:minimWeight}, and Theorem \ref{th25:NMDS}. In particular, for
$A_{d+1}(C_m)=0$ see Theorem \ref{th25:NMDS}(v)(vii).
\end{proof}

\begin{lemma}\label{lem3:LowBndAd}
Let $C$ be a linear $[n,k,d]_qR$ code of packing radius $e=\lfloor(d-1)/2\rfloor$, with $A_d(C)$ codewords of weight $d$.
Let $A_{d}(n,k,q)$ be $\min\limits_C A_d(C)$. Let $\Psi_{2e+2}(n,k,q)$ and $\Psi_{2e+1}^{(J)}(n,k,q)$ be as in \eqref{eq23:A2ep2UP} and in \eqref{eq23:A2ep1J}, respectively. The following lower bounds hold:
\begin{align}
&A_{2e+2}(n,k,q)\ge\left\lceil\Psi_{2e+2}(n,k,q)\right\rceil_{q-1}\db\label{eq3:A2ep2UP},\\
&\label{eq3:A2ep1RedCeilodd}
A_{2e+1}(n,k,q)\ge\left\lceil n\left\lceil\Psi_{2(e-1)+2}(n-1,k,q)\right\rceil_{q-1}(2e+1)^{-1}\right\rceil_{q-1},\db\\
&\label{eq3:A2ep1JonsonCeil}
A_{2e+1}(n,k,q)\ge\lceil\Psi_{2e+1}^{(J)}(n,k,q)\rceil_{q-1},\db\\
&\label{eq3:A2ep2JRedCeil}
A_{2e+2}(n,k,q)\ge\left\lceil n\left\lceil\Psi_{2e+1}^{(J)}(n-1,k,q)\right\rceil_{q-1}(2e+2)^{-1}\right\rceil_{q-1},
\end{align}
where $\lceil\bullet\rceil_{q-1}$ is the nearest greater integer multiple of $q-1$.
\end{lemma}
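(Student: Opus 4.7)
The plan is to combine two ingredients already in place: the divisibility fact from Lemma~\ref{lem3:minimWeight}, which says that $q-1$ divides $A_i(C)$ for every weight $i$ in every linear code $C$ over $\F_q$ (because $\F_q^*$ acts on weight-$i$ codewords by scalar multiplication), and the real-valued lower bounds from Theorem~\ref{th23:Ad low bnds}. Since $A_d(C)$ is a nonnegative integer multiple of $q-1$, any real lower bound $L$ on $A_d(C)$ can be strengthened to $\lceil L\rceil_{q-1}$.

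For the two direct bounds \eqref{eq3:A2ep2UP} and \eqref{eq3:A2ep1JonsonCeil}, I would simply apply \eqref{eq23:A2ep2UP} and \eqref{eq23:A2ep1J}, respectively, and replace the right-hand side by its $\lceil\cdot\rceil_{q-1}$ using the observation above. This handles these two lines immediately.

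For the two reduction-type bounds \eqref{eq3:A2ep1RedCeilodd} and \eqref{eq3:A2ep2JRedCeil}, the idea is to revisit the averaging/puncturing mechanism used in \cite{DavTombWordsMinWeght} to derive \eqref{eq23:A2ep2ReducOdd} and \eqref{eq23:A2ep2JRed}: for each of the $n$ coordinates one punctures and counts incidences of minimum-weight codewords of $C$ with minimum-weight codewords of the length-$(n-1)$ punctured code, giving an inequality that links $(2e+1)A_{2e+1}(C)$ (resp.\ $(2e+2)A_{2e+2}(C)$) with $n$ times the minimum-weight count of the punctured codes. Each punctured code is linear over $\F_q$, so its minimum-weight count is itself a multiple of $q-1$; applying the already-established \eqref{eq3:A2ep2UP} and \eqref{eq3:A2ep1JonsonCeil} to the punctured codes therefore produces the inner ceilings $\lceil \Psi_{2(e-1)+2}(n-1,k,q)\rceil_{q-1}$ and $\lceil \Psi_{2e+1}^{(J)}(n-1,k,q)\rceil_{q-1}$. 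Substituting these stronger inner bounds into the averaging inequality and then invoking divisibility once more for $A_{2e+1}(C)$ and $A_{2e+2}(C)$ yields the outer ceiling, which is exactly the form claimed in \eqref{eq3:A2ep1RedCeilodd} and \eqref{eq3:A2ep2JRedCeil}.

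The only potentially subtle point is justifying the legitimacy of the nested ceiling in the two reduction bounds, but this reduces to observing that puncturing preserves linearity over $\F_q$, so the intermediate minimum-weight counts are genuine multiples of $q-1$. I do not anticipate any real obstacle; the whole argument is a two-line application of Lemma~\ref{lem3:minimWeight} on top of Theorem~\ref{th23:Ad low bnds}.
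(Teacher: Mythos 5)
Your proposal is correct and follows essentially the same route as the paper: combine the divisibility of $A_d(C)$ by $q-1$ (Lemma~\ref{lem3:minimWeight}) with the bounds \eqref{eq23:A2ep2UP}--\eqref{eq23:A2ep2JRed} of Theorem~\ref{th23:Ad low bnds}, rounding up to the nearest multiple of $q-1$. Your explicit justification of the inner ceilings in \eqref{eq3:A2ep1RedCeilodd} and \eqref{eq3:A2ep2JRedCeil} — via the puncturing/averaging mechanism of \cite{DavTombWordsMinWeght}, noting that the punctured codes are again linear so their minimum-weight counts are multiples of $q-1$ — is exactly the point the paper's terse proof leaves implicit, so it is a welcome clarification rather than a deviation.
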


\begin{proof}
  The value $A_d(C)$ is an integer multiple of $q-1$, see Lemma \ref{lem3:minimWeight}. Now the assertions
  \eqref{eq3:A2ep2UP}--\eqref{eq3:A2ep2JRedCeil} follow from \eqref{eq23:A2ep2UP}--\eqref{eq23:A2ep2JRed}.
\end{proof}

\begin{lemma} \label{lem3:Rierarh}
We consider the $[n,k,d]_{q^m}R_m$ code $C_m$ over the field
$\F_{q^m}$, obtained by $m$-lifting an $[n,k,d]_qR$ code $C$  over $\F_q$. We have
\begin{description}
  \item[(i)] \emph{\cite[Section III, equations (1), (2)]{GCR2021IEEE-EFS}}, \emph{\cite[Section II]{GCR2022IEEE-EWS}}
  \begin{align}\label{eq3:Rm}
    &m\le R_m\le mR_1;~R_1\le R_2\le\ldots\le R_{n-k}=R_{n-k+1} =R_{n-k+2}=\ldots =n-k.
  \end{align}

  \item[(ii)]If $R_m=n-k$ with $m=m'\le n-k$ then $R_m=n-k$ for all $m\ge m'$.
  \end{description}
\end{lemma}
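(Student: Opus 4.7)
Part (i) is cited from \cite{GCR2021IEEE-EFS} and \cite{GCR2022IEEE-EWS}; I would recall the short argument for the upper bound $R_m \le m R_1$ since it is the piece most directly needed in the deduction of (ii). By Proposition \ref{prop24:m-lifted}, any $\ov \in \F_{q^m}^{\,n}$ admits a unique decomposition $\ov = \ov_0 + \alpha \ov_1 + \cdots + \alpha^{m-1}\ov_{m-1}$ with $\ov_i \in \F_q^{\,n}$. Choosing $\oc_i \in C$ so that $d(\ov_i,\oc_i) \le R_1$ and setting $\oc := \oc_0 + \alpha \oc_1 + \cdots + \alpha^{m-1}\oc_{m-1}$, one has $\oc \in C_m$, and the support of $\ov - \oc$ is contained in $\bigcup_i \mathrm{supp}(\ov_i - \oc_i)$, which has at most $mR_1$ elements; hence $d(\ov,C_m)\le mR_1$. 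The lower bound $m \le R_m$, the monotonicity $R_1 \le R_2 \le \cdots$, the universal upper bound $R_m \le n-k$, and the equality $R_{n-k}=n-k$ (which together yield the terminal plateau $R_{n-k}=R_{n-k+1}=\cdots=n-k$) are all drawn from the same references.

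For part (ii), the plan is to deduce the claim directly from the monotonicity in (i). Assume $R_{m'} = n-k$ with $m' \le n-k$. By the non-decreasing chain supplied by (i), $R_m \ge R_{m'} = n-k$ for every $m \ge m'$; on the other hand, the universal upper bound $R_m \le n-k$ holds for every $m$. Sandwiching these two inequalities forces $R_m = n-k$ for all $m \ge m'$, which is exactly the assertion.

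The argument poses no real obstacle once part (i) is granted: (ii) is essentially a one-line corollary of the non-decreasing behaviour of $(R_m)_{m \ge 1}$ together with the universal upper bound $R_m \le n-k$. The only small point worth noting is that the monotonicity from (i) applies for every integer $m \ge m'$ (not just multiples of $m'$ or only for $m \le n-k$), which is precisely the content of the chain $R_{m'} \le R_{m'+1} \le R_{m'+2} \le \cdots$ quoted from the references; this allows the bridge between $m'$ and any larger $m$, including those in the range $m > n-k$ where the plateau from (i) already guarantees $R_m = n-k$ a priori.
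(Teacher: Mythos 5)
Your proposal is correct and follows essentially the same route as the paper: part (i) is taken from the cited references on generalized covering radii, and part (ii) is deduced from (i) by sandwiching the non-decreasing sequence $R_{m'}\le R_{m'+1}\le\cdots$ against the universal bound $R_m\le n-k$, which is exactly the (implicit) content of the paper's one-line proof of (ii). Your added sketch of $R_m\le mR_1$ via the decomposition $C_m=C+\alpha C+\cdots+\alpha^{m-1}C$ is a correct but optional embellishment, not a different approach.
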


\begin{proof}
\begin{description}
  \item[(i)]
  In \cite[Section III]{GCR2021IEEE-EFS},\cite[Section II.A]{GCR2022IEEE-EWS},
the so-called $m$-th generalized covering radius of an $[n,k,d]_qR$ code $C$ over the field $\F_q$ is considered and it is shown that this generalized radius is equal to the regular covering radius $R_m$ of the $m$-lifted code $C_m$. The results of \cite{GCR2021IEEE-EFS,GCR2022IEEE-EWS} give rise to the assertion.

\item[(ii)] The assertion follows from the case (i) of this lemma. \qedhere
  \end{description}
\end{proof}

\begin{theorem}\label{th3:NMDS-Adp1=0-UPWS} \textbf{\emph{(general theorem on infinite families of  UPWS NMDS codes)}}
Let $C_m$ be the $[n,k,n-k]_{q^m}R_m$ NMDS code over the field
$\F_{q^m}$, obtained by $m$-lifting an $[n,k,n-k]_qR_1$ NMDS code $C_1$  over $\F_q$, $m\ge 1$.  Let
\begin{equation}\label{eq3:AdC1main}
  A_{d}(C_1)=A_{n-k}(C_1)=\binom{n}{k+1}\frac{q-1}{n-k}.
\end{equation}
We have the following.
  \begin{description}
    \item[(i)]  The codes $C_m=[n,k,n-k]_{q^m}R_m$  with $m\ge n-k$ form an infinite family of UPWS NMDS codes with $R_m=n-k$.

    \item[(ii)]  If $R_{m'}=n-k$ for $m'< n-k$ then the codes $C_m$  with $m\ge m'$ form an infinite family of UPWS NMDS codes with $R_m=n-k$.
  \end{description}
\end{theorem}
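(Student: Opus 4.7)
The plan is to invoke the UPWS criterion of Theorem~\ref{th1:UPWS nes-suf}, which reduces the claim to proving $R_m = s((C_m)^\bot)$. Since Lemma~\ref{lem3:obvious1} gives $(C_m)^\bot = (C^\bot)_m$, I will show both quantities equal $n-k$ by squeezing them: the Delsarte bound from Theorem~\ref{th22:cosets}(i) supplies $R_m \le s((C_m)^\bot)$ from one side, and Lemma~\ref{lem3:Rierarh} supplies $R_m \ge n-k$ from the other. All $m$-lifts in sight remain NMDS by Lemma~\ref{lem3:NMDS}, so the NMDS machinery of Section~\ref{subsec22:nearMDS} applies freely.

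The first step is to translate the hypothesis \eqref{eq3:AdC1main} into a vanishing statement on the dual lift. By Theorem~\ref{th25:NMDS}(iv) the minimum-weight counts of an NMDS code and its dual coincide, so $A_k(C_1^\bot) = A_{n-k}(C_1) = \binom{n}{k+1}(q-1)/(n-k)$. Substituting $k \mapsto n-k$ in \eqref{eq25:UpBndAd NMDS} yields $A^{\max}_k(n,n-k,q) = \binom{n}{n-k-1}(q-1)/(n-k) = \binom{n}{k+1}(q-1)/(n-k)$, so $C_1^\bot$ attains the maximum, and the equivalence clause of the second bullet of Theorem~\ref{th25:NMDS}(v) forces $A_{k+1}(C_1^\bot) = 0$. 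Applying Theorem~\ref{th25:NMDS}(vii) to $C_1^\bot$ (or, equivalently, using \eqref{eq3:Adp1(Cm)=0} on $C_1^\bot$) propagates the vanishing to every lift: $A_{k+1}((C^\bot)_m) = 0$ for all $m \ge 1$. Because $(C^\bot)_m$ is NMDS with minimum distance $k$, its weight support is contained in $\{k, k+2, k+3, \ldots, n\}$, a set of size $n-k$; hence $s((C_m)^\bot) \le n-k$.

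The second step handles the covering radius. Combining Delsarte with the previous step gives $R_m \le s((C_m)^\bot) \le n-k$. For the reverse inequality, Lemma~\ref{lem3:Rierarh} yields $R_m = n-k$ in both parts of the theorem: in (i), the hypothesis $m \ge n-k$ places us in the stationary tail of \eqref{eq3:Rm}; in (ii), Lemma~\ref{lem3:Rierarh}(ii) propagates $R_{m'} = n-k$ to every $m \ge m'$. The squeeze gives $R_m = s((C_m)^\bot) = n-k$, and Theorem~\ref{th1:UPWS nes-suf} now asserts that $C_m$ is UPWS.

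I expect the main obstacle to be psychological rather than technical: recognizing that the hypothesis \eqref{eq3:AdC1main}, whose right-hand side $\binom{n}{k+1}(q-1)/(n-k)$ does \emph{not} match the natural maximum $A^{\max}_d(n,k,q) = \binom{n}{k-1}(q-1)/k$ for $C_1$ itself, should be read as the statement that $C_1^\bot$ attains its own maximal $A_k$. Once this dual-side reading is in hand, the vanishing $A_{k+1}((C^\bot)_m) = 0$ and the final Delsarte squeeze both follow from results already tabulated in Sections~\ref{subsec22:nearMDS} and~\ref{sec3:useful}, and no delicate case analysis of intermediate values of $R_m$ is needed.
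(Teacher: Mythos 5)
Your proof is correct and follows essentially the same route as the paper: read hypothesis \eqref{eq3:AdC1main} via Theorem \ref{th25:NMDS}(iv) as $C_1^\bot$ attaining $A^{\max}_{d}(n,n-k,q)$, deduce $A_{d+1}(C_1^\bot)=0$ from \eqref{eq25:UpBndAd NMDS}, propagate it to all lifts by Theorem \ref{th25:NMDS}(vii) so that $s(C_m^\bot)\le n-k$, and then squeeze with the Delsarte bound and Lemma \ref{lem3:Rierarh}(i)(ii) to get $R_m=s(C_m^\bot)=n-k$ and conclude via Theorem \ref{th1:UPWS nes-suf}. No discrepancies worth noting.
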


\begin{proof}
  \begin{description}
    \item[(i)]
  By Definition \ref{def25:nearMDS}(ii), the code $C_m^\bot$ is an $[n,n-k,k]_{q^m}$ NMDS code, $m\ge 1$. So, $d(C_m)=n-k$, $d(C_m^\bot)=k$, $m\ge 1$.  By \eqref{eq25:UpBndAd NMDS}, \eqref{eq3:AdC1main}, and Theorem \ref{th25:NMDS}(iv),
  \begin{equation*}
    A^{\max}_{d}(n,n-k,q)=\binom{n}{k+1}\frac{q-1}{n-k}=A_{d}(C_1)=A_{d}(C_1^\bot),
  \end{equation*}
 that implies $A_{d+1}(C_1^\bot)=0$, see \eqref{eq25:UpBndAd NMDS}. Therefore,  by
 Theorem \ref{th25:NMDS}(vii), $A_{d+1}(C_m^\bot)=0$ for all $m\ge 1$. This means, $s(C_m^\bot)\le n-d(C_m^\bot)=n-k$, $m\ge 1$, see Lemma~\ref{lem3:obvious2}.

  For $C_m$, by Lemma \ref{lem3:Rierarh}(i), covering radius $R_m=n-k$ if $m\ge n-k$. By above and by Delsarte bound, Theorem \ref{th22:cosets}(i), $R_m=n-k\le s(C_m^\bot)\le n-k$. The only possibility is $R_m=n-k=s(C_m^\bot)$. Now, the assertion follows from Theorem \ref{th1:UPWS nes-suf}.
      \item[(ii)]  The assertion follows adding Lemma \ref{lem3:Rierarh}(ii) to the proof of the case (i). \qedhere
  \end{description}
\end{proof}

\section{New infinite families of near-MDS codes uniformly packed in the wide sense (UPWS)} \label{sec4:newUPWS}

\subsection{The ternary Golay code, its dual and extended one, and the corresponding lifted codes}

We consider the ternary perfect $[11,6,5]_32$ Golay code $\G_{11}$, its dual $[11,5,6]_35$ code $\G_{11}^\bot$, and
the self-dual quasi-perfect $[12,6,6]_33$ extended ternary Golay code $\G_{12}\triangleq\widehat{\G}_{11}$. Also, we research
the corresponding $m$-lifted codes $\G_{11,m}=[11,6,5]_{3^m}R_{11,m}$, $\G_{11,m}^\bot=[11,5,6]_{3^m}R_{11,m}^\bot$, $\G_{12,m}=[12,6,6]_{3^m}R_{12,m}$, $m\ge2$, see Definition \ref{def1:lift}, Proposition \ref{prop1:m-lifted}.
It is known that $\G_{11}$, $\G_{11}^\bot$,  and $\G_{12}$ are UPWS codes \cite {BasZaiZinUP,BasZin,BorgRiZin2019PIT} and, moreover,
$\G_{11}$  and $\G_{12}$ are CR codes, see Proposition \ref{prop23:examplesCR}(iii). Also, see \eqref{eq1:support non-zero weights}, $S(\G_{11})=\{5,6,8,9,11\}$, $s(\G_{11})=5$, $S(\G_{11}^\bot)=\{6,9\}$, $s(\G_{11})=2$, $S(\G_{12})=\{6,9,12\}$, $s(\G_{12})=3$  \cite{HufPless,MWS,Roth}. Let $I_k$ be the $k\times k$ identity matrix. The generator matrices of the noted codes are as follows:
\begin{align}\label{eq41:Golay3}
& G(\G_{11})=[I_6\,|\,M(\G_{11})],~ G(\G_{11}^\bot)=[I_5\,|\,M(\G_{11}^\bot)],
~G(\G_{12})=[I_6\,|\,M(\G_{12})];\db\\
&  M(\G_{11})= \left[ \begin{array}{c}
   2  2  2  2  2 \\
   2  2  1  1  0 \\
   2  1  2  0  1 \\
   1  2  0  2  1  \\
   1  0  2  1  2  \\
   0  1  1  2  2
    \end{array}  \right],~
 M(\G_{11}^\bot)=\left[ \begin{array}{c}
    0 2 2 2 2 2  \\
    2 2 1 0 2 1  \\
    2 2 0 1 1 2  \\
    2 1 1 2 0 2   \\
    2 1 2 1 2 0
   \end{array}  \right],~
    M(\G_{12})= \left[ \begin{array}{c}
      0  1  1  1  1  1 \\
      1  0  1  2  2  1 \\
      1  1  0  1  2  2 \\
      1  2  1  0  1  2  \\
      1  2  2  1  0  1  \\
      1  1  2  2  1  0
       \end{array}  \right]. \notag
       \end{align}

    \begin{proposition}\label{prop41:Golay}
   For the $m$-lifted codes  $\G_{11,m}=[11,6,5]_{3^m}R_{11,m}$,\\
   $\G_{11,m}^\bot =[11,5,6]_{3^m}R_{11,m}^\bot$,
     and $\G_{12,m}=[12,6,6]_{3^m}R_{12,m}$, $m\ge2$, the following hold.
   \begin{description}
    \item[(i)] All these codes are NMDS.

    \item[(ii)] For covering radii we have the following: $R_{11,2}=R_{11,3}=4$, $R_{11,m}=5, ~m\ge4$; $R_{11,2}^\bot=R_{11,3}^\bot=5$,
 $R_{11,m}^\bot=6,~m\ge4$; $R_{12,2}=4,R_{12,3}=5$, $R_{12,m}=6, ~m\ge6$.

    \item[(iii)] We have  the following:
    \begin{align}
    & S(\G_{11,m})=\{5,6,7,8,9,10,11\},~  s(\G_{11,m})=7,~ m\ge2;\label{eq41:s11m}\db\\
     & S(\G_{11,m}^\bot )=\{6,8,9,10,11\}, ~ s(\G_{11,m}^\bot )=5,~m\ge2;\label{eq41:s11m bot}\db\\
     &   S(\G_{12,m})=\{6,8,9,10,11,12\}, ~ s(\G_{12,m})=6,~m\ge2;\label{eq41:s12m}\db\\
    &\label{eq41:Adp1=a7=0}
      A_{d+1}(\G_{11,m}^\bot )=A_7(\G_{11,m}^\bot )=A_{d+1}(\G_{12,m})=A_7(\G_{12,m})=0,~m\ge1.
    \end{align}
       \end{description}
  \end{proposition}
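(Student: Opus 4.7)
The plan is to verify the NMDS property at the ground level and then lift. Each starting code satisfies $d(C)+d(C^\bot)=n$: directly $5+6=11$ for $\G_{11}$ and $6+6=12$ for the self-dual $\G_{12}$, while $\G_{11}^\bot$ is NMDS by Theorem \ref{th25:NMDS}(i). Definition \ref{def25:nearMDS} then identifies all three as NMDS, and Lemma \ref{lem3:NMDS} (combined with Lemma \ref{lem3:obvious1}) propagates the property to $\G_{11,m}$, $\G_{11,m}^\bot$, and $\G_{12,m}$ for every $m\ge2$.

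\textbf{Part (iii).} For equation (\ref{eq41:Adp1=a7=0}) I would first observe that the classical weight distributions satisfy $A_6(\G_{11}^\bot)=132=\binom{11}{4}(q-1)/5$ and $A_6(\G_{12})=264=\binom{12}{5}(q-1)/6$, so in both cases $A_d$ attains the bound $A_d^{\max}(n,k,3)$ of (\ref{eq25:UpBndAd NMDS}); equation (\ref{eq3:Adp1(Cm)=0}) of Theorem \ref{th3:LiftWeightDistr} then immediately yields $A_7(\G_{11,m}^\bot)=A_7(\G_{12,m})=0$ for every $m\ge1$. For the supports (\ref{eq41:s11m})--(\ref{eq41:s12m}), plug $A_5(\G_{11})=132=A_6(\G_{11}^\bot)$ (Theorem \ref{th25:NMDS}(iv)) and $A_6(\G_{12})=264$ into formula (\ref{eq3:weight distrib NMDS}), and verify by direct calculation at $m=2$ that each weight in the claimed support is attained with strictly positive multiplicity (and that $A_7$ vanishes where asserted). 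The monotonicity $S(C_m)\subseteq S(C_{m+1})$ of Lemma \ref{lem3:obvious2} then extends the assertions to all $m\ge2$ from the single computation at $m=2$.

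\textbf{Part (ii).} The upper bounds come at once from Lemma \ref{lem3:Rierarh}(i): $R_{11,m}\le\min(2m,5)$, $R_{11,m}^\bot\le\min(5m,6)=6$, and $R_{12,m}\le\min(3m,6)$. Together with the monotonicity of $\{R_m\}$ and the starting values $R_1(\G_{11})=2$, $R_1(\G_{11}^\bot)=5$, $R_1(\G_{12})=3$, this reduces the task to identifying $R_m$ at the finitely many transitional indices: $m=2,3,4$ for $\G_{11,m}$ and $\G_{11,m}^\bot$, and $m=2,\ldots,6$ for $\G_{12,m}$. Once any $R_{m^\ast}=n-k$ is established, Lemma \ref{lem3:Rierarh}(ii) settles every larger $m$ automatically.

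The main obstacle is the \emph{sharp} lower bound at each of these transitions, since the generic inequalities cannot distinguish, say, $R_{11,2}=3$ from $R_{11,2}=4$, nor confirm that $R_{11,4}$ has already reached $5$. My plan is to exploit the identification, recalled in the proof of Lemma \ref{lem3:Rierarh}, of $R_m(C)$ with the $m$-th generalized covering radius of the starting code $C$, and to perform a direct coset-leader search on the syndrome space $\F_{q^m}^{\,n-k}$. Since $n-k\le 6$ and only a handful of values of $m$ require verification, this finite computation either exhibits an explicit deep hole at the claimed level (furnishing the lower bound) or certifies that every syndrome is covered at a smaller radius (tightening the upper bound); in either case it pins $R_m$ down uniquely, after which Lemma \ref{lem3:Rierarh}(ii) completes the description for all larger $m$.
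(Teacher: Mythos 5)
Your proposal is correct and follows essentially the same route as the paper: part (i) via Definition \ref{def25:nearMDS} and Lemma \ref{lem3:NMDS}, part (iii) via the maximal value $A_d=A^{\max}_d$ with \eqref{eq3:Adp1(Cm)=0} for the vanishing of $A_7$, the lifted weight-distribution formula \eqref{eq3:weight distrib NMDS} evaluated at $m=2$, and Lemma \ref{lem3:obvious2} to extend to all $m\ge2$, and part (ii) via the bounds and monotonicity of Lemma \ref{lem3:Rierarh} combined with a finite computation at the small transitional values of $m$ (the paper performs exactly this computation with Magma). The only cosmetic difference is that you list $m=2,\ldots,6$ as transitional for $\G_{12,m}$, whereas only $m=2,3$ are needed for the stated claim ($m=4,5$ are left open in the paper and $m\ge6$ is automatic from Lemma \ref{lem3:Rierarh}(i)).
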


  \begin{proof}
    \begin{description}
    \item[(i)]
    The assertion follows from Definition \ref{def25:nearMDS} and Lemma \ref{lem3:NMDS}.

    \item[(ii)] By
    Lemma \ref{lem3:Rierarh}(i) with \eqref{eq3:Rm}, we have the following: $R_{11,m}=5, ~m\ge5$;  $R_{11,m}^\bot=R_{12,m}=6, ~m\ge6$. In addition, from \eqref{eq41:Golay3}, using the  symbolic computation system Magma \cite{Magma}, we obtain: $R_{11,2}=R_{11,3}=4$, $R_{11,4}=5$;  $R_{11,2}^\bot=R_{11,3}^\bot=5$, $R_{11,4}^\bot=R_{11,5}^\bot=6$; $R_{12,2}=4$, $R_{12,3}=5$.

    \item[(iii)] By the case (i), we can use Theorem \ref{th3:LiftWeightDistr} with \eqref{eq3:weight distrib NMDS}.

       $\bullet$ By \eqref{eq23:A2ep2ReducOdd} or \eqref{eq23:A2ep1J}, $A_d(\G_{11})=A_5(\G_{11})=132$.  By \eqref{eq3:weight distrib NMDS} with $m=2$, we obtain the weight distribution of the non-zero codewords (see also \cite[Example 8]{DingTangDesignsNMDS2020IEEE})
    \begin{align*}
     &\{A_5=A_6=528, A_7=15840, A_8=40920, A_9=129800,\db\\
     & A_{10}=198000,A_{11}=145824\}\T{ for }\G_{11,2}=[11,6,5]_{9}4.\notag
    \end{align*}
    This implies $S(\G_{11,2})=\{5,6,7,8,9,10,11\}$, $s(\G_{11,2})=7$. Now, the assertion \eqref{eq41:s11m} for $\G_{11,m}$ follows from Lemma \ref{lem3:obvious2}.

    $\bullet$ By \eqref{eq25:UpBndAd NMDS}, $A^{\max}_{d}(n,k,q)= \binom{n}{k-1}\frac{q-1}{k}$. By \eqref{eq3:A2ep2JRedCeil} or by Theorem \ref{th25:NMDS}(v) with \eqref{eq25:AdNMDS}, $A_d(\G_{11}^\bot)=A_6(\G_{11}^\bot)=132$. By \eqref{eq25:AdNMDS} or \eqref{eq23:A2ep2UP}, or \eqref{eq3:A2ep2JRedCeil}, $A_d(\G_{12})=A_6(\G_{12})=264$. For $m\ge2$, by \eqref{eq3:minimWeight},
   we have $A_{d}(\G_{11,m}^\bot)=A_{6}(\G_{11,m}^\bot)=132(3^m-1)/(3-1),~A_{d}(\G_{12,m})=A_{6}(\G_{12,m})=264(3^m-1)/(3-1)$,
 that implies, see \eqref{eq3:Adp1(Cm)=0}, $A_{d}(\G_{11,m}^\bot)=A_{6}(\G_{11,m}^\bot)=
 A^{\max}_{d}(11,5,3^m)$, $A_{d}(\G_{12,m})=A_{6}(\G_{12,m})=A^{\max}_{d}(12,6,3^m)$. This proves \eqref{eq41:Adp1=a7=0}.

$\bullet$ By \eqref{eq3:weight distrib NMDS}, using $A_{d}(\G_{11}^\bot)$ and $A_{d}(\G_{12})$, for $m=2$ we obtain the  weight distributions:
\begin{align*}
 &\{A_6=528, A_8=7920, A_9=11000, A_{10}=23760, A_{11}=15840\}\T{ for }\G_{11,2}^\bot;\db\\
 &\{\A_6=1056,A_8=23760, A_9=44000, A_{10}=142560,A_{11}=190080,A_{12}=129984\}\db\\
 &\T{ for }\G_{12,2}=[12,6,6]_{9}4.
\end{align*}
  This implies $S(\G_{11,2}^\bot)=\{6,8,9,10,11\}$, $s(\G_{11,2}^\bot)=5$,
$S(\G_{12,2})=\{6,8,9,10,11,12\}$, $s(\G_{12,2})=6$.
Now, the assertions \eqref{eq41:s11m bot}, \eqref{eq41:s12m}, follow from Lemma \ref{lem3:obvious2} and \eqref{eq41:Adp1=a7=0}.
\qedhere
    \end{description}
  \end{proof}

\begin{theorem}\label{th41:UPWSliftGolay}
\emph{(The main results on the $m$-lifted codes $\G_{11,m},\G_{12,m},\G_{11,m}^\bot$, $m\ge1$)}
\begin{description}
    \item[(i)] We have two infinite families $\Ds_1$, $\Ds_2$ of UPWS NMDS codes.\\
   $\Ds_1$: the $m$-lifted extended Golay codes,\\
    \phantom{$\Ds_1$: }$\G_{12,1}=[12,6,6]_{3}3$, $\G_{12,m}=[12,6,6]_{3^m}6$, $m\ge6$.\\
  $\Ds_2$: the $m$-lifted Golay codes, $\G_{11,1}=[11,6,5]_{3}2$, $\G_{11,m}=[11,6,5]_{3^m}5$, $m\ge4$.

  \item[(ii)]
  The following NMDS codes are \emph{not} UPWS:\\
   -- the infinite family of $m$-lifted dual Golay codes $\G_{11,m}^\bot=[11,5,6]_{3^m}R_{11,m}^\bot$, $m\ge2$;\\
   -- the two $m$-lifted Golay codes $\G_{11,m}=[11,6,5]_{3^m}4$, $m=2,3$;\\
   -- the two $m$-lifted extended Golay codes  $\G_{12,2}=[12,6,6]_{3^2}4$, $\G_{12,3}=[12,6,6]_{3^3}5$.

  \item[(iii)] The dual Golay code $\G_{11}^\bot=\G_{11,1}^\bot=[11,5,6]_35$ is CR. Also, this code, punctured on the position $11$, is the  $(\G_{11}^\bot)^*_{11}=[10,5,5]_34$ self-dual CR code. Moreover, surprisingly, the code \emph{\cite[p.\ 24, (S.13)]{BorgRiZin2019PIT}}, obtained as the set of all codewords of weights $0,6,9$ of the Golay code $\G_{11}=[11,6,5]_32$, coincides with $\G_{11}^\bot$ which is the dual code of $\G_{11}$.

  \item[(iv)]  Let $A^{\max}_{d}(n,k,q)=\binom{n}{k-1}\frac{q-1}{k}$ be as in  \eqref{eq25:UpBndAd NMDS}. For $m\ge1$, all the codes of the infinite families $\G_{12,m}=[n,k,d]_q=[12,6,6]_{3^m}$  and $\G_{11,m}^\bot=[n,k,d]_q=[11,5,6]_{3^m}$ have the maximum possible (for NMDS codes) number of codewords of minimal weight $d=6$ which is equal to $A^{\max}_{6}(12,6,3^m)$ and $A^{\max}_{6}(11,5,3^m)$, respectively. Moreover, for $m=1$, this maximal number coincides with the minimal possible (for non-restricted codes) number of codewords of minimal weight $d=6$.

      Also, the Golay code $\G_{11}=[11,6,5]_{3}$ has the minimal possible (for non-restricted codes) number of codewords of minimal weight $d=5$.

  \item[(v)]  For $m\ge1$, all the codes of the infinite families $\G_{12,m}=[n,k,d]_q=[12,6,6]_{3^m}$  and $\G_{11,m}^\bot=[n,k,d]_q=[11,5,6]_{3^m}$ have no codewords of weight $d+1=7$.
\end{description}
\end{theorem}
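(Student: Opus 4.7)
The plan is to dispatch the five parts by routing each through the framework developed in Section \ref{sec3:useful}, with Proposition \ref{prop41:Golay} feeding in the lifted parameters.

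For Part (i), the approach is to apply Theorem \ref{th3:NMDS-Adp1=0-UPWS}. Both starting codes $\G_{12}$ and $\G_{11}$ are NMDS by Proposition \ref{prop41:Golay}(i) and satisfy the hypothesis $A_d(C_1)=\binom{n}{k+1}(q-1)/(n-k)$ of \eqref{eq3:AdC1main}: direct substitution yields $\binom{12}{7}\cdot 2/6=264=A_6(\G_{12})$ and $\binom{11}{7}\cdot 2/5=132=A_5(\G_{11})$, values already confirmed in the proof of Proposition \ref{prop41:Golay}(iii). The lifted covering radii recorded in Proposition \ref{prop41:Golay}(ii) locate the smallest $m'$ with $R_{m'}=n-k$: $m'=4$ for the $\G_{11,m}$ family and $m'=6$ for the $\G_{12,m}$ family. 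Theorem \ref{th3:NMDS-Adp1=0-UPWS}(ii) then delivers the desired UPWS NMDS families for $m\ge m'$, and the $m=1$ representatives $\G_{11}$ and $\G_{12}$ are CR (hence UPWS) by Proposition \ref{prop23:examplesCR}(iii).

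For Part (ii), the approach is to apply Theorem \ref{th1:UPWS nes-suf} in contrapositive form: it suffices to exhibit in each case a discrepancy between $R(C)$ and $s(C^\bot)$, using Lemma \ref{lem3:obvious1} to pass to second duals. For $\G_{11,m}^\bot$ one has $s((\G_{11,m}^\bot)^\bot)=s(\G_{11,m})=7$ by \eqref{eq41:s11m}, while $R_{11,m}^\bot\le 6$. For $\G_{11,m}$ at $m=2,3$ one has $R=4$ but $s(\G_{11,m}^\bot)=5$ by \eqref{eq41:s11m bot}. For $\G_{12,m}$ at $m=2,3$ the self-duality from Proposition \ref{prop24:m-lifted} gives $s(\G_{12,m}^\bot)=s(\G_{12,m})=6$ by \eqref{eq41:s12m}, while $R_{12,2}=4$ and $R_{12,3}=5$; in every case the UPWS equality fails.

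For Part (iv), the identities $A_6(\G_{12})=264=A^{\max}_6(12,6,3)$ and $A_6(\G_{11}^\bot)=132=A^{\max}_6(11,5,3)$ are direct arithmetic from \eqref{eq25:UpBndAd NMDS}, and \eqref{eq3:Adp1(Cm)=0} of Theorem \ref{th3:LiftWeightDistr} extends them to every $m\ge 1$; the minimality-at-$m=1$ claim compares these values with the lower bounds of Lemma \ref{lem3:LowBndAd}, namely \eqref{eq3:A2ep2UP} for $\G_{12}$, \eqref{eq3:A2ep2JRedCeil} for $\G_{11}^\bot$, and \eqref{eq3:A2ep1JonsonCeil} for $\G_{11}$. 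Part (v) is the second conclusion of \eqref{eq3:Adp1(Cm)=0} applied to the same two families. For Part (iii), the CR properties of $\G_{11}^\bot$ and of $(\G_{11}^\bot)^*_{11}$, along with the self-duality of the latter, will be confirmed by direct Magma computation from the generator matrix of $\G_{11}^\bot$ given in \eqref{eq41:Golay3}. The ``surprising'' coincidence of the weight-$\{0,6,9\}$ codewords of $\G_{11}$ with the entire code $\G_{11}^\bot$ is proved by counting: the weight distribution of $\G_{11}$ gives $1+132+110=243=3^5=|\G_{11}^\bot|$, so the claim reduces to the subcode inclusion $\G_{11}^\bot\subseteq\G_{11}$, an explicit verification from \eqref{eq41:Golay3} that $H(\G_{11})\cdot G(\G_{11}^\bot)^T=0$ over $\F_3$.

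The principal obstacle will be the cluster of ad hoc verifications demanded by Part (iii) --- in particular the subcode inclusion $\G_{11}^\bot\subseteq\G_{11}$ and the self-duality of the punctured code --- neither of which flows from the structural machinery of Section \ref{sec3:useful}, and each of which must be established directly at the level of the explicit generator matrices in \eqref{eq41:Golay3}.
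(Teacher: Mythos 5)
Your proposal is correct and follows essentially the same route as the paper: parts (i)–(ii) combine Proposition \ref{prop41:Golay} with Theorem \ref{th1:UPWS nes-suf} and Theorem \ref{th3:NMDS-Adp1=0-UPWS} (the paper itself notes the latter can be used directly), parts (iv)–(v) rest on the same bounds \eqref{eq23:A2ep2UP}, \eqref{eq23:A2ep1J}, \eqref{eq3:A2ep2JRedCeil} and on \eqref{eq3:Adp1(Cm)=0}, and part (iii) is settled by Magma computation just as in the paper. The only small deviation is your counting-plus-self-orthogonality argument ($243$ codewords of weights $0,6,9$ in $\G_{11}$ versus $\#\G_{11}^\bot=3^5$, reduced to $\G_{11}^\bot\subseteq\G_{11}$) for the coincidence with the code of \cite[(S.13)]{BorgRiZin2019PIT}, which the paper simply verifies by computer; that refinement is sound, granting the standard values $A_6(\G_{11})=132$, $A_9(\G_{11})=110$ obtainable from \eqref{eq3:weight distrib NMDS} with $m=1$.
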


\begin{proof}
  \begin{description}
    \item[(i)]
  By Proposition \ref{prop41:Golay}(ii) and \eqref{eq41:s12m},
  for the self-dual $\G_{12,m}=[12,6,6]_{3^m}R_{12,m}$ codes with $m\ge6$, we have $R_{12,m}=s(\G_{12,m})=6$. By Proposition \ref{prop41:Golay}(ii) and \eqref{eq41:s11m bot}, for the $\G_{11,m}=[11,6,5]_{3^m}R_{11,m}$ codes with $m\ge4$, we have $R_{11,m}=s(\G_{11,m}^\bot)=5$. So, by Theorem \ref{th1:UPWS nes-suf}, these families consist of UPWS codes. All these codes are NMDS, see Proposition \ref{prop41:Golay}(i). Also, we can use directly Theorem \ref{th3:NMDS-Adp1=0-UPWS}. Finally, by
  \cite{BasZaiZinUP,BasZin,BorgRiZin2019PIT}, $\G_{11,1}$ and $\G_{12,1}$ are UPWS codes.

  \item[(ii)]
  By Proposition \ref{prop41:Golay}(ii) and \eqref{eq41:s11m}, for the $\G_{11,m}^\bot=[11,5,6]_{3^m}R_{11,m}^\bot$ codes with $m\ge2$, we have $R_{11,m}^\bot=5<s(\G_{11,m})=7$ if $m=2,3$, and $R_{11,m}^\bot=6<s(\G_{11,m})=7$ if $m\ge4.$ So, by Theorem \ref{th1:UPWS nes-suf}, this  family does not contain any UPWS code.

  By Proposition \ref{prop41:Golay}(ii) and \eqref{eq41:s11m bot}, \eqref{eq41:s12m}, we have $R_{11,m}=4<s(\G_{11,m}^\bot)=5$, $m=2,3$;
   $R_{12,2}=4<s(\G_{12,2})=6$; $R_{12,3}=5<s(\G_{12,3})$=6.
   So, by Theorem \ref{th1:UPWS nes-suf}, the corresponding codes $\G_{11,2}$, $\G_{11,3}$, $\G_{12,2}$, $\G_{12,3}$ are not UPWS.

  \item[(iii)] By computer search with Magma \cite{Magma}, for the code $\G_{11}^\bot=\G_{11,1}^\bot=[11,5,6]_35$, all the cosets of each weight 1,2,3,4,5  have identical weight distribution.
      We have similar results for the $(\G_{11}^\bot)^*_{11}=[10,5,5]_34$ code. Now the first two assertions follow from Definition \ref{def22:t regular CR}. The last assertion has been checked directly by computer.

  \item[(iv)] The assertions follow from \eqref{eq23:A2ep2UP}, \eqref{eq23:A2ep1J}, \eqref{eq3:A2ep2JRedCeil}, and  Proof of Proposition \ref{prop41:Golay}(iii).

  \item[(v)]  The assertion follows from \eqref{eq41:Adp1=a7=0}.
       \qedhere
    \end{description}
 \end{proof}


\subsection{The punctured ternary Golay codes, their dual ones, and the corresponding lifted codes}

Let $\G_{11}$ be the $[11,6,5]_32$ Golay code given by the generator matrix $G(\G_{11})=[I_6\,|\,M(\G_{11})]$  \eqref{eq41:Golay3}. We consider the codes, obtained by puncturing $\G_{11}$ on the positions 11,  \{11,10\}, and \{11,10,9\}, and the corresponding dual codes. We denote these codes $\G_{10}\triangleq(\G_{11})^*_{11}$, $\G_{9}\triangleq(\G_{11})^*_{11,10}$,  $\G_{8}\triangleq(\G_{11})^*_{11,10,9}$, and $\G_{10}^\bot$, $\G_{9}^\bot$, $\G_{8}^\bot$. The generator matrices $G(\G_n)$ of the punctured codes are formed by removing the corresponding columns from $G(\G_{11})$~\eqref{eq41:Golay3}; for the dual codes we use the standard methods of \cite{HufPless,MWS,Roth}.  We have
\begin{align}\label{eq42:Golay3Punct}
 & M(\G_{10})= \left[ \begin{array}{c}
   2  2  2  2 \\
   2  2  1  1 \\
   2  1  2  0 \\
   1  2  0  2  \\
   1  0  2  1  \\
   0  1  1  2
    \end{array}  \right],~
     M(\G_{9})= \left[ \begin{array}{c}
   2  2  2 \\
   2  2  1 \\
   2  1  2 \\
   1  2  0 \\
   1  0  2 \\
   0  1  1
    \end{array}  \right],~
     M(\G_{8})= \left[ \begin{array}{c}
   2  2 \\
   2  2 \\
   2  1 \\
   1  2  \\
   1  0  \\
   0  1
    \end{array}  \right];\db\\
&G(\G_{n})=[I_6\,|\,M(\G_{n})],~G(\G_{n}^\bot)=[-M(\G_{n})^{tr}\,|\,I_{n-6} ],~n=10,9,8,\notag
    \end{align}
where $I_k$ is the $k\times k$ identity matrix, $tr$ is the sign of transposition.

Also, we investigate the corresponding $m$-lifted codes $\G_{n,m}$, $\G_{n,m}^\bot$, $n=10,9,8$, $m\ge2$, see Definition \ref{def1:lift}, Proposition \ref{prop1:m-lifted}. For $m=1$, we put $\G_{n,1}=\G_{n}$, $\G_{n,1}^\bot=\G_{n}^\bot$.

\begin{proposition}\label{prop42:codesG1G1botproperty}
For the punctured codes $\G_{10}\triangleq(\G_{11})^*_{11}$, $\G_{9}\triangleq(\G_{11})^*_{11,10}$,  $\G_{8}\triangleq(\G_{11})^*_{11,10,9}$, and their dual $\G_{10}^\bot$, $\G_{9}^\bot$, $\G_{8}^\bot$, given by \eqref{eq42:Golay3Punct}, the following hold.
\begin{description}
  \item[(i)]The codes $\G_{n},~\G_{n}^\bot$, are NMDS with parameters $\G_{10}=[10, 6, 4]_32$, $\G_{9} =[9, 6, 3]_32$, $\G_{8} =[8, 6, 2]_31$;~
    $\G_{10}^\bot=[10,4,6]_35,~\G_{9}^\bot=[9,3,6]_35,~\G_{8}^\bot=[8,2,6]_35$.

  \item[(ii)] On the number of small weights, for $n=10,9,8$, we have the following:
\begin{align}
   &A_{d+1}(\G_{n}^\bot)=A_{7}(\G_{n}^\bot)=0,~A_{d+2}(\G_{n}^\bot)=A_{8}(\G_{n}^\bot)=0;\label{eq42:SmallWeights}\db\\
     &A_d(\G_{n})=A_{n-6}(\G_{n})=A_d(\G_{n}^\bot)=A_6(\G_{n}^\bot)=\binom{n}{n-7}\frac{3-1}{n-6}
     =A^{\max}_{d}(n,n-6,3).\notag
         \end{align}

  \item[(iii)]  For a code $C$, let $S(C)$ and $s(C)$ be as in \eqref{eq1:support non-zero weights}. Then
\begin{align*}
    & S(\G_{n})=\{n-6,n-5,\ldots,n\},~ s(\G_{n})=7,~n=10,9,8;\db\\
    &  S(\G_{n}^\bot )=\{6,9\},~s(\G_{n}^\bot)=2,~n=10,9;~S(\G_{8}^\bot)=\{6\},~s(\G_{8}^\bot)=1.
 \end{align*}
  \item[(iv)]  The codes $\G_{10},~\G_{9}$, and $\G_{8}$ are CR (and hence, UPWS).

  \item[(v)]  The code $\G_{10}^\bot=[10,4,6]_35$ is $2$-regular.
\end{description}
\end{proposition}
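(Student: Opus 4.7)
The plan is to establish the NMDS structure first and then let the NMDS machinery of Section~\ref{subsec22:nearMDS} do most of the work. I would begin by observing that the Golay code $\G_{11}$ itself is NMDS, since $d(\G_{11}) + d(\G_{11}^\bot) = 5 + 6 = 11$ satisfies Definition~\ref{def25:nearMDS}. Applying Theorem~\ref{th25:NMDS}(iii) three times, puncturing columns $11$, then $10$, then $9$ of the generator matrix $G(\G_{11})$ in~\eqref{eq41:Golay3} while checking at each stage that the deleted column lies in a $(k+1)$-set of columns containing $k$ linearly dependent ones, produces the NMDS codes $\G_{10}$, $\G_9$, $\G_8$ with the announced parameters; their duals are NMDS by Theorem~\ref{th25:NMDS}(i) and coincide with the extremal $[2q+k,\,k,\,2q]_q$ codes listed in Theorem~\ref{th25:NMDS}(vi) for $q=3$ and $k\in\{2,3,4\}$. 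For (ii), extremality of $\G_n^\bot$ gives $A_7(\G_n^\bot)=0$ at once; substituting $n=2q+k$ into~\eqref{eq25:Adp2 NMDS} collapses its right-hand side to zero, whence $A_8(\G_n^\bot)=0$; the iff-clause of~\eqref{eq25:UpBndAd NMDS} upgrades this to $A_6(\G_n^\bot)=A^{\max}_d(n,n-6,3)$; finally Theorem~\ref{th25:NMDS}(iv) transports the equality to $A_{n-6}(\G_n)=A_6(\G_n^\bot)$.

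\textbf{Part (iii).} For $\G_8^\bot$ the cardinality count $|\G_8^\bot|=9=1+A_6(\G_8^\bot)$ exhausts the code, forcing $S(\G_8^\bot)=\{6\}$. For $\G_9^\bot$ and $\G_{10}^\bot$ I would feed the value $A_d=A^{\max}_d$ from~(ii) into the recursion~\eqref{eq25:weight distrib NMDS} and read off the remaining $A_{d+s}$, confirming that $A_7=A_8=0$ (recovering~(ii)), that $A_9>0$, and that $A_{10}=0$ in the $n=10$ case. The same recursion applied to each primal $\G_n$ with its known $A_{n-6}$ from~(ii) produces nonzero values at every weight in $\{n-6,n-5,\dots,n\}$, establishing $s(\G_n)=7$.

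\textbf{Parts (iv)--(v).} Part~(iv) assembles three independent arguments. The code $\G_{10}$ is already listed as CR in Proposition~\ref{prop23:examplesCR}(iii). For $\G_9$, part~(iii) shows that its dual $\G_9^\bot$ is a two-weight code, so Proposition~\ref{prop23:examplesCR}(ii) gives that $\G_9$ is CR with $R=2$. For $\G_8$, I would compute the $2\times 8$ parity-check matrix from~\eqref{eq42:Golay3Punct} and observe that, up to scalar, its columns form two copies of the four points of $\PG(1,3)$; hence the matrix is monomially equivalent to $[H_2^3\mid H_2^3]$, and Proposition~\ref{prop23:examplesCR}(iv) with $\ell=2$, $m=2$, $q=3$ delivers CR. The main obstacle is part~(v): no ingredient from Section~\ref{sec2:prelim} yields the $2$-regularity of the higher-dimensional code $\G_{10}^\bot$ directly, so I would fall back on computer enumeration with Magma of all $3^{10}/81=729$ cosets of $\G_{10}^\bot$, checking that all cosets of weight $1$ share one common weight distribution and all cosets of weight $2$ share another, in line with the computational verifications employed elsewhere in the paper (e.g.\ in the proof of Theorem~\ref{th41:UPWSliftGolay}(iii)).
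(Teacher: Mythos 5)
Your proposal follows essentially the same route as the paper's own proof: NMDS-ness of $\G_{10},\G_9,\G_8$ via Theorem~\ref{th25:NMDS}(iii) and of the duals via Theorem~\ref{th25:NMDS}(i)(vi); part (ii) from extremality ($A_7=0$), the vanishing of the bound \eqref{eq25:Adp2 NMDS} at $n=2q+k$ ($A_8=0$), the iff-clause of \eqref{eq25:UpBndAd NMDS}, and Theorem~\ref{th25:NMDS}(iv); part (iii) by plugging $A_d$ into the NMDS weight-distribution recursion; part (iv) from the two-weight duals via Proposition~\ref{prop23:examplesCR}(ii) (the paper uses this for both $\G_{10}$ and $\G_9$, you invoke Proposition~\ref{prop23:examplesCR}(iii) for $\G_{10}$, which the paper also mentions as an alternative) and the $[H_2^3\,|\,H_2^3]$ structure of $H(\G_8)$ via Proposition~\ref{prop23:examplesCR}(iv); and part (v) by the same Magma coset enumeration the paper performs. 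Your cardinality shortcut $\#\G_8^\bot=9=1+A_6$ is a harmless simplification of the paper's explicit weight-distribution computation.

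The one genuine omission concerns the covering radii of the dual codes: part (i) asserts $R(\G_{10}^\bot)=R(\G_9^\bot)=R(\G_8^\bot)=5$, and nothing in your argument establishes this. None of the structural tools of Section~\ref{sec2:prelim} pins these values down (they only give $R\le n-k=6$, and the Delsarte bound $R\le s(\G_n)=7$ is weaker), which is precisely why the paper determines them by the same Magma computation it uses for $2$-regularity: the coset enumeration shows all three duals have covering radius $5$. Your proposed enumeration of the $729$ cosets of $\G_{10}^\bot$ would in fact yield $R(\G_{10}^\bot)=5$ as a byproduct (the maximal coset weight), but you do not draw that conclusion, and $\G_9^\bot$, $\G_8^\bot$ are not treated at all; to complete the proof you need to extend the computational check (or supply another argument) to all three dual codes. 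The covering radii of the primal codes, by contrast, are covered by your citations of Proposition~\ref{prop23:examplesCR}(ii)--(iv).
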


\begin{proof}
\begin{description}
  \item[(i first part)]
The code $\G_{11}=[11,6,5]_32$ is NMDS. Therefore, by Theorem \ref{th25:NMDS}(iii), the punctured codes are NMDS such that $\G_{n}=[n, 6, n-6]_3$. Hence, the dual codes $\G_{n}^\bot=[n,n-6,6]_3$ are NMDS as well, see Theorem \ref{th25:NMDS}(i). Also, for
$\G_{n}^\bot$ we can use Theorem \ref{th25:NMDS}(ii). The case (i) has been proved except the values of covering radii.

  \item[(ii)] By Theorem \ref{th25:NMDS}(vi), $\G_{n}^\bot=[n,n-6, 6]_3$, $n=10,9,8$, are extremal NMDS codes with $A_{d+1}(\G_{n}^\bot)=A_{7}(\G_{n}^\bot)=0$, that implies, by Theorem \ref{th25:NMDS}(v) and \eqref{eq25:UpBndAd NMDS},
  $A_{d}(\G_{n}^\bot)=A_{6}(\G_{n}^\bot)=A^{\max}_{d}(n,n-6,3)=\binom{n}{n-7}\frac{3-1}{n-6}$.  By Theorem \ref{th25:NMDS}(iv), $A_{d}(\G_{n})=A_{d}(\G_{n}^\bot)$. We have proved the 1-st and 3-rd equalities in \eqref{eq42:SmallWeights}. The 2-nd one holds by \eqref{eq25:Adp2 NMDS}.

   Also, for $A_{d}(\G_{10})$ we can use \eqref{eq25:AdNMDS}, and for $A_{d}(\G_{n}^\bot)$, $n=10,9,8$, we can apply Theorem \ref{th23:Ad low bnds}(ii) together with \eqref{eq23:A2ep2UP}, \eqref{eq23:A2ep2ReducOdd}.

\item[(iii)]
By the case (i), we can use \eqref{eq3:weight distrib NMDS}.  From \eqref{eq42:SmallWeights} we take $A_{d}(\G_{n})=A_{d}(\G_{n}^\bot)$, $A_{7}(\G_{n}^\bot)=A_{8}(\G_{n}^\bot)=0$. Then, by \eqref{eq3:weight distrib NMDS} with $m=1$, we obtain the weight distributions:
       \begin{align*}
       &\{A_4=60, A_5=144, A_6=60, A_7=240, A_8=180, A_9=20, A_{10}=24\}\T{ for } \G_{10};\db\\
       & \{A_3=24, A_4=A_5=108, A_6=192, A_7=216, A_8=54, A_9=26\}\T{ for }\G_{9};\db\\
       & \{A_2= 8, A_3=64, A_4=120, A_5=176, A_6= 232, A_7=96, A_8=32 \}\T{ for }\G_{8};\db\\
       &\{A_6=60, A_9=20\}\T{ for }\G_{10}^\bot;~\{A_6=24, A_9=2\}\T{ for }\G_{9}^\bot;~\{A_6=8\}\T{ for }\G_{8}^\bot.
       \end{align*}

  \item[(iv)(i second part)] By the case (iii), the codes $\G_{10}^\bot$, $\G_{9}^\bot$, are two-weight ones. So, by Proposition \ref{prop23:examplesCR}(ii), their dual codes $\G_{10}$, $\G_{9}$, are CR with covering radius 2. Also, for $\G_{10}$, we can use Proposition \ref{prop23:examplesCR}(iii).

By \eqref{eq42:Golay3Punct}, for $\G_{8}$ we can represent a parity check matrix $H(\G_{8})$ as follows
\begin{equation*}
H(\G_{8})  = G(\G_{8}^\bot)=\left[ \begin{array}{ccc}
0111&|&0212\\
2012&|&1011
\end{array}  \right].
\end{equation*}
 It is formed by two parity check matrices of the $[4,2,3]_31$ Hamming code. By
Proposition \ref{prop23:examplesCR}(iv), $\G_{8}$ is a CR code with distance 2 and covering radius 1.

We have proved the case (iv) and the values of covering radii for $\G_{n}$, $n=10,9,8$.

  \item[(v)(i third part)] By computer search with the help of  Magma \cite{Magma}, we obtained that all the codes $\G_{n}^\bot$, $n=10,9,8$, have covering radius $R=5$. Moreover, as a byproduct, the search showed that for the code $\G_{10}^\bot=[10,4,6]_35$, all the cosets of weight 1 (similarly, all the ones of weight 2) have identical weight distribution. Now the assertion of the case~(v) follows from Definition \ref{def22:t regular CR}.
\qedhere
\end{description}
\end{proof}

\begin{proposition}\label{prop42:Golay}
   Let $n=10,9,8$, $m\ge2$. For the $m$-lifted codes\\ $\G_{n,m}=[n,6,n-6]_{3^m}R_{n,m}$,~
   $\G_{n,m}^\bot=[n,n-6,6]_{3^m}R_{n,m}^\bot$, the following hold.
   \begin{description}
    \item[(i)] All these codes are NMDS.

    \item[(ii)] The covering radii are as follow. $
      R_{10,2}=3,~R_{10,m}=4,~m\ge3$; $R_{9,m}=3,~m\ge2$;
            $R_{8,m}=2,~m\ge2$; $R_{n,m}^\bot=6,~m\ge2, ~n=10,9,8.$

    \item[(iii)]
    On the number of small weights we have the following:
\begin{align}
   &A_{d+1}(\G_{n,m}^\bot)=A_{7}(\G_{n,m}^\bot)=0,~n=10,9,8,~m\ge2;\label{eq42:small weights m2}\db\\
     &A_d(\G_{n,m})=A_{n-6}(\G_{n,m})=A_d(\G_{n,m}^\bot)=A_6(\G_{n,m}^\bot)=\binom{n}{n-7}\frac{3^m-1}{n-6}
     \label{eq42:small weights2-m2}\db\\
     &=A^{\max}_{d}(n,n-6,3^m),~n=10,9,8,~m\ge2.\notag
         \end{align}

    \item[(iv)] For a code $C$, let $S(C)$ and $s(C)$ be as in \eqref{eq1:support non-zero weights}. For $m\ge2$, the following hold:
    \begin{align}\label{}
    & S(\G_{n,m})=\{n-6,n-5,\ldots,n\},~ s(\G_{n,m})=7,~n=10,9,8;\label{eq42:Ssnm}\db\\
    &S(\G_{10,m}^\bot )=\{6,8,9,10\},~s(\G_{10,m}^\bot )=4;
    ~S(\G_{9,m}^\bot )=\{6,8,9\},~s(\G_{9,m}^\bot)=3;\label{eq42:Ssnm bot}\db\\
    &S(\G_{8,m}^\bot)=\{6,8\},~s(\G_{8,m}^\bot)=2.\label{eq42:s8m bot ge2}
    \end{align}
  \end{description}
  \end{proposition}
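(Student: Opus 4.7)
The plan is to prove the four parts in the order stated, leaning on Proposition~\ref{prop42:codesG1G1botproperty} for the base case and on the lifting machinery of Section~\ref{sec3:useful}. Part~(i) is immediate: since $\G_n$ and $\G_n^\bot$ are NMDS by Proposition~\ref{prop42:codesG1G1botproperty}(i), Lemma~\ref{lem3:NMDS} lifts this to every $\G_{n,m}$ and $\G_{n,m}^\bot$, $m\ge 1$.

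For Part~(iii), Proposition~\ref{prop42:codesG1G1botproperty}(ii) supplies $A_{d+1}(\G_n^\bot)=0$ and $A_d(\G_n^\bot)=A^{\max}_d(n,n-6,3)$. Theorem~\ref{th25:NMDS}(vii) propagates the vanishing to every $\G_{n,m}^\bot$, giving \eqref{eq42:small weights m2}, while formula \eqref{eq3:minimWeight} of Lemma~\ref{lem3:minimWeight} yields
\[
A_d(\G_{n,m}^\bot)=\frac{A_d(\G_n^\bot)}{3-1}(3^m-1)=\binom{n}{n-7}\frac{3^m-1}{n-6}=A^{\max}_d(n,n-6,3^m).
\]
Because $\G_{n,m}$ and $\G_{n,m}^\bot$ are mutually dual NMDS codes (Part~(i) together with Lemma~\ref{lem3:obvious1}), Theorem~\ref{th25:NMDS}(iv) equates their minimum-weight counts, finishing \eqref{eq42:small weights2-m2}.

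Part~(iv) for $\G_{n,m}$ follows by sandwiching: Proposition~\ref{prop42:codesG1G1botproperty}(iii) gives $S(\G_n)=\{n-6,\ldots,n\}$, so Lemma~\ref{lem3:obvious2} yields $\{n-6,\ldots,n\}=S(\G_n)\subseteq S(\G_{n,m})\subseteq\{n-6,\ldots,n\}$, hence equality \eqref{eq42:Ssnm}. For the dual side, Lemma~\ref{lem3:obvious2} inherits the $m=1$ weights $\{6,9\}$ (respectively $\{6\}$ when $n=8$) from Proposition~\ref{prop42:codesG1G1botproperty}(iii), and Part~(iii) supplies $A_7=0$; what remains is to certify that the \emph{new} weights $A_8$ (for every $n$) and $A_{10}$ (only for $n=10$) are strictly positive for all $m\ge 2$. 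I would read these off the NMDS lift-weight formula \eqref{eq3:weight distrib NMDS} with $A_d(\G_n^\bot)$ from \eqref{eq42:SmallWeights}; a direct expansion produces expressions of the form $(3^m-1)(3^m-3)$ times positive polynomials in $3^m$ (for instance $A_8(\G_{8,m}^\bot)=(3^m-1)(3^m-3)$), all visibly positive for $m\ge 2$, yielding \eqref{eq42:Ssnm bot} and \eqref{eq42:s8m bot ge2}.

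The hard part will be Part~(ii). Lemma~\ref{lem3:Rierarh}(i) guarantees $R_{n-k}=n-k$, and Lemma~\ref{lem3:Rierarh}(ii) then propagates stabilisation to larger $m$; this already yields $R_{8,m}=2$ for $m\ge 2$, $R_{9,m}=3$ for $m\ge 3$, $R_{10,m}=4$ for $m\ge 4$, and $R_{n,m}^\bot=6$ for $m\ge 6$. The remaining small-$m$ values, namely $R_{9,2}$; $R_{10,2}$ and $R_{10,3}$; and $R_{n,m}^\bot$ for $m\in\{2,3,4,5\}$, are not forced by Lemma~\ref{lem3:Rierarh}, so I would certify them by direct Magma~\cite{Magma} computation from the parity-check matrices in \eqref{eq42:Golay3Punct}, mirroring the strategy of Proposition~\ref{prop41:Golay}(ii). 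In practice it suffices to verify $R_{10,2}=3$, $R_{10,3}=4$, $R_{9,2}=3$, and $R_{n,2}^\bot=6$ for each $n=10,9,8$; Lemma~\ref{lem3:Rierarh}(ii) then automatically extends these equalities to all higher $m$.
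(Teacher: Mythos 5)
Your proposal is correct and follows essentially the same route as the paper: NMDS-ness via Lemma \ref{lem3:NMDS}, the weight counts via Theorem \ref{th25:NMDS}(vii), Lemma \ref{lem3:minimWeight} and the NMDS duality of minimum-weight counts, the supports via Lemma \ref{lem3:obvious2} together with formula \eqref{eq3:weight distrib NMDS}, and the covering radii via Lemma \ref{lem3:Rierarh} supplemented by Magma for the small $m$. The only (harmless) variations are that you verify positivity of the new weights $A_8$, $A_{10}$ symbolically in $3^m$ (e.g.\ $A_8(\G_{8,m}^\bot)=(3^m-1)(3^m-3)$, which checks out) where the paper computes the $m=2$ distributions numerically and extends by Lemma \ref{lem3:obvious2}, and that you invoke Lemma \ref{lem3:Rierarh}(ii) to reduce the Magma computations for $R_{n,m}^\bot$ to $m=2$, whereas the paper verifies $m=2,\ldots,5$ directly.
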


  \begin{proof}
      \begin{description}
    \item[(i)]
    The assertion follows from Definition \ref{def25:nearMDS} and Lemma \ref{lem3:NMDS}.

    \item[(ii)] By  \eqref{eq3:Rm}, we have the following:
    $R_{10,m}=4,~m\ge4$;  $R_{9,m}=3,~m\ge3$;  $R_{8,m}=2,~m\ge2$;  $R_{n,m}^\bot=6, ~m\ge6,~n=10,9,8$.
    The  values $R_{10,2}=3,~R_{10,3}=4$, $R_{9,2}=3$, $R_{n,m}^\bot=6,~m=2,3,4,5,~n=10,9,8$, have been obtained using Magma \cite{Magma}.

    \item[(iii)] The assertions follow from \eqref{eq42:SmallWeights}, Theorems \ref{th25:NMDS}(vii), \ref{th3:LiftWeightDistr}, and Lemma \ref{lem3:minimWeight}.

    \item[(iv)]
    The assertion \eqref{eq42:Ssnm} follows from Proposition \ref{prop42:codesG1G1botproperty}(iii) and Lemma
    \ref{lem3:obvious2}.

    By the case (i), we can use \eqref{eq3:weight distrib NMDS}. We take $A_{d}(\G_{n,1}^\bot)$ from \eqref{eq42:SmallWeights}.  Then, by \eqref{eq3:weight distrib NMDS} with $m=2$,
we obtain the following weight distributions of the non-zero codewords:
 \begin{align*}
  & \{A_6=240,  A_8=2160, A_9=2000, A_{10}=2160\}\T{ for }\G_{10,2}^\bot;\db\\
  &\{A_6=96,  A_8=432, A_9=200\}\T{ for }\G_{9,2}^\bot;~\{A_6=32,  A_8=48\}\T{ for }\G_{8,2}^\bot.
 \end{align*}
   So,
$S(\G_{10,2}^\bot)=\{6,8,9,10\}$, $s(\G_{10,2}^\bot)=4$, $S(\G_{9,2}^\bot)=\{6,8,9\}$, $s(\G_{9,2}^\bot)=3$,  $S(\G_{8,2}^\bot)=\{6,8\}$, $s(\G_{6,2}^\bot)=2$.
Now \eqref{eq42:Ssnm bot} \eqref{eq42:s8m bot ge2} follow from Lemma \ref{lem3:obvious2} and \eqref{eq42:small weights m2}.
    \qedhere
    \end{description}
  \end{proof}

 \begin{theorem}\label{th42:UPWSliftGolay}
 \emph{(The main results on the $m$-lifted codes $\G_{n,m},\G_{n,m}^\bot$, $n=10,9,8$, $m\ge1$)}
\begin{description}
      \item[(i)] We have three infinite families $\Ds_3$, $\Ds_4$, $\Ds_5$ of UPWS NMDS codes.\\
  $\Ds_3$: the $m$-lifted Golay codes, punctured on the position $11$;\\
   \phantom{$\Ds_3$: }$\G_{10,1}=[10,6,4]_{3}2$, $\G_{10,m}=[10,6,4]_{3^m}4,~m\ge3$.\\
  $\Ds_4$: the $m$-lifted Golay codes, punctured on the positions $11,10$;\\
   \phantom{$\Ds_4$: }$\G_{9,1}=[9,6,3]_{3}2$, $\G_{9,m}=[9,6,3]_{3^m}3,~m\ge2$.\\
  $\Ds_5$: the $m$-lifted Golay codes, punctured on the positions $11,10,9$;\\
   \phantom{$\Ds_5$: }$\G_{8,1}=[8, 6, 2]_{3}1$, $\G_{8,m}=[8, 6, 2]_{3^m}2,~m\ge2$.

  \item[(ii)]
The codes of the family
$\Ds_5$, are CR NMDS. Also, the code $\G_{9,1}=[9,6,3]_{3}2$ is CR NMDS;
the code $\G_{10}^\bot=[10,4,6]_35$ is $2$-regular.

  \item[(iii)]
  The following NMDS codes are \emph{not} UPWS.\\
    --  The three infinite families, consisting of codes dual to punctured Golay ones and the corresponding $m$-lifted codes; in details:\\
  $\G_{10,1}^\bot=[10,4,6]_{3}5$, $\G_{10,m}^\bot=[10,4,6]_{3^m}6$, $m\ge2$;\\
  $\G_{9,1}^\bot=[9,3,6]_{3}5$, $\G_{9,m}^\bot=[9,3,6]_{3^m}6$, $m\ge2$;\\
  $\G_{8,1}^\bot=[8,2,6]_{3}6$, $\G_{8,m}^\bot=[8,2,6]_{3^m}6$, $m\ge2$.\\
  -- The code $\G_{10,2}=[10,6,4]_{3^2}3$.

  \item[(iv)]  Let $A^{\max}_{d}(n,k,q)=\binom{n}{k-1}\frac{q-1}{k}$ be as in  \eqref{eq25:UpBndAd NMDS}. For $m\ge1$, all the codes of the three infinite families $\G_{10,m}^\bot=[10,4,6]_{3^m}$, $\G_{9,m}^\bot=[9,3,6]_{3^m}$, and
  $\G_{8,m}^\bot=[8,2,6]_{3^m}$, have the maximum possible (for NMDS codes) number of codewords of minimal weight $d=6$ which is equal to $A^{\max}_{6}(10,6,3^m)$, $A^{\max}_{6}(9,6,3^m)$, and $A^{\max}_{6}(8,6,3^m)$, respectively.

      Also, the punctured Golay codes $\G_{10}=[10,6,4]_{3}$, $\G_{9}=[9,6,3]_{3}$, and
  $\G_{8}=[8,6,2]_{3}$, have the minimal possible (for non-restricted codes) number of codewords of minimal weight $d=6$.

  \item[(v)]  For $m\ge1$, all the codes of the three infinite families $\G_{10,m}^\bot=[10,4,6]_{3^m}$, $\G_{9,m}^\bot=[9,3,6]_{3^m}$, and
  $\G_{8,m}^\bot=[8,2,6]_{3^m}$, have no codewords of weight $d+1=7$.

    \end{description}
\end{theorem}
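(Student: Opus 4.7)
My plan is to deduce all five parts by combining the UPWS criterion of Theorem~\ref{th1:UPWS nes-suf} (namely $R(C)=s(C^\bot)$) with the covering radii, weight supports, and numbers of minimum-weight codewords already compiled in Propositions~\ref{prop42:codesG1G1botproperty} and~\ref{prop42:Golay}. Once those two propositions are in hand, the theorem is essentially a synthesis statement: each part reduces to lining up the correct piece of numerical data with the correct general result.

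For part~(i) I will verify the equality $R(C)=s(C^\bot)$ family by family using Theorem~\ref{th1:UPWS nes-suf}. For $\G_{10,m}$ with $m\ge 3$, Proposition~\ref{prop42:Golay}(ii) gives $R_{10,m}=4$ and \eqref{eq42:Ssnm bot} gives $s(\G_{10,m}^\bot)=4$; for $\G_{9,m}$ with $m\ge 2$ we have $R_{9,m}=3=s(\G_{9,m}^\bot)$; for $\G_{8,m}$ with $m\ge 2$ we have $R_{8,m}=2=s(\G_{8,m}^\bot)$ via \eqref{eq42:s8m bot ge2}. The starting codes $m=1$ are CR by Proposition~\ref{prop42:codesG1G1botproperty}(iv) and hence UPWS by Proposition~\ref{prop23:examplesCR}(i). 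The NMDS property for every $m$ is Proposition~\ref{prop42:Golay}(i). Alternatively, since Proposition~\ref{prop42:codesG1G1botproperty}(ii) verifies the hypothesis $A_d(C_1)=A_{d}^{\max}(n,k,q)$ of Theorem~\ref{th3:NMDS-Adp1=0-UPWS}, that general theorem can be applied directly to each of $\Ds_3,\Ds_4,\Ds_5$.

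Part~(ii) requires an upgrade from UPWS to CR, for which I will invoke the dual two-weight characterization of Proposition~\ref{prop23:examplesCR}(ii): the dual of any linear two-weight code is CR with $R=2$. From \eqref{eq42:s8m bot ge2}, $S(\G_{8,m}^\bot)=\{6,8\}$ for every $m\ge 2$, so $\G_{8,m}=(\G_{8,m}^\bot)^\bot$ is the dual of a linear two-weight code and therefore CR with covering radius~$2$; the case $m=1$ is already in Proposition~\ref{prop42:codesG1G1botproperty}(iv). Likewise, $S(\G_9^\bot)=\{6,9\}$ from Proposition~\ref{prop42:codesG1G1botproperty}(iii) makes $\G_9^\bot$ two-weight and so $\G_{9,1}$ CR. The $2$-regularity of $\G_{10}^\bot$ is recorded as Proposition~\ref{prop42:codesG1G1botproperty}(v).

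For parts (iii)--(v) the work is essentially bookkeeping. In (iii) I exhibit a strict inequality $R(C)<s(C^\bot)$ in each case, which by Theorem~\ref{th1:UPWS nes-suf} rules out UPWS: the three dual families have $R\le 6$ by Propositions~\ref{prop42:codesG1G1botproperty}(i) and~\ref{prop42:Golay}(ii), whereas $s(\G_{n,m})=7$ by \eqref{eq42:Ssnm} (and Proposition~\ref{prop42:codesG1G1botproperty}(iii) for $m=1$), and for $\G_{10,2}$ one reads off $R_{10,2}=3<4=s(\G_{10,2}^\bot)$. In (iv), the maximality $A_d(\G_{n,m}^\bot)=A^{\max}_d(n,n-6,3^m)$ is exactly \eqref{eq42:SmallWeights} and \eqref{eq42:small weights2-m2} together with the upper bound in \eqref{eq25:UpBndAd NMDS}, while the minimality claim for the starting punctured codes follows from Lemma~\ref{lem3:LowBndAd} combined with Theorem~\ref{th23:Ad low bnds}(ii), which states that all codes obtained by puncturing $\G_{12}$ down to an $[8,6,2]_3$ code achieve those bounds. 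Part~(v) is immediate from \eqref{eq42:SmallWeights} and \eqref{eq42:small weights m2} (or Theorem~\ref{th25:NMDS}(vii)). The only non-mechanical step I expect is part~(ii): CR is strictly stronger than UPWS, so it does not follow from part~(i), and one must recognise that the two-weight property of $\G_{8,m}^\bot$ persists for all $m\ge 2$ and apply Proposition~\ref{prop23:examplesCR}(ii).
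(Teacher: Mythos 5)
Your proposal is correct and follows essentially the same route as the paper: parts (i) and (iii) are the $R=s(C^\bot)$ criterion of Theorem~\ref{th1:UPWS nes-suf} fed with the data of Propositions~\ref{prop42:codesG1G1botproperty} and~\ref{prop42:Golay}, part (ii) is the two-weight-dual argument via Proposition~\ref{prop23:examplesCR}(ii) plus Proposition~\ref{prop42:codesG1G1botproperty}(iv)(v), and parts (iv)--(v) are read off from \eqref{eq42:SmallWeights}, \eqref{eq42:small weights2-m2} and the bounds of Theorem~\ref{th23:Ad low bnds}. The only cosmetic deviations (deriving UPWS of the $m=1$ codes from their CR property rather than citing the literature, and using $R\le 6<7$ instead of the exact covering radii in (iii)) are harmless.
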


\begin{proof}
    \begin{description}
    \item[(i)]
  By Proposition \ref{prop42:Golay}(ii) and  \eqref{eq42:Ssnm bot}, \eqref{eq42:s8m bot ge2}, for $\G_{10,m}=[10,6,4]_{3^m}R_{10,m}$, $m\ge3$, we have $R_{10,m}=s(\G_{10,m}^\bot)=4$,
  for  $\G_{9,m}=[9,6,3]_{3^m}R_{9,m}$, $m\ge2$, it holds $R_{9,m}=s(\G_{9,m}^\bot)=3$, for  $\G_{8,m}=[8,6,2]_{3^m}R_{8,m}$, $m\ge2$, we have $R_{8,m}=s(\G_{8,m}^\bot)=2$. So, by Theorem \ref{th1:UPWS nes-suf}, these families consist of UPWS codes. All these codes are NMDS, see Propositions \ref{prop42:codesG1G1botproperty}(i),  \ref{prop42:Golay}(i). Also, we can use directly Theorem \ref{th3:NMDS-Adp1=0-UPWS}. Finally, by \cite{BasZaiZinUP,BasZin,BorgRiZin2019PIT}, $\G_{n,1}$, $n=10,9,8$, are UPWS codes.

\item[(ii)] For $\G_{8,1},\G_{9,1}$, see Proposition \ref{prop42:codesG1G1botproperty}(iv).
By \eqref{eq42:s8m bot ge2}, the codes $\G_{8,m}^\bot,~m\ge2$ are two-weight. So, by Proposition \ref{prop23:examplesCR}(ii), their dual $\G_{8,m}$ are CR with covering radius~2. Finally, for $\G_{10}^\bot=[10,4,6]_35$, see  Proposition \ref{prop42:codesG1G1botproperty}(v).

  \item[(iii)]
  Let $n=10,9,8$. By Propositions \ref{prop42:codesG1G1botproperty}(i)(iii) and \ref{prop42:Golay}(ii)(iv) with  \eqref{eq42:Ssnm}, for $\G_{n,1}^\bot=[n,n-6,6]_3R_{n,1}^\bot$ we have $R_{n,1}^\bot=5<s(\G_{n,1})=7$; for
  $\G_{n,m}^\bot=[n,n-6,6]_3R_{n,m}^\bot$ we have $R_{n,m}^\bot=6<s(\G_{n,m})=7$, $m\ge2$. Finally, by
  Proposition \ref{prop42:Golay}(ii) and \eqref{eq42:Ssnm bot},  for $\G_{10,2}=[10,6,4]_{3^2}R_{10,2}$ we have $R_{10,2}=3<s(\G_{10,2}^\bot)=4$.
   So, by Theorem \ref{th1:UPWS nes-suf}, these codes are not UPWS.

   \item[(iv)] The assertions follow from Theorem \ref{th25:NMDS}(iv) and \eqref{eq42:SmallWeights},  \eqref{eq42:small weights2-m2}, \eqref{eq23:A2ep2UP}, \eqref{eq23:A2ep1J}.

  \item[(v)]  The assertion follows from Propositions \ref{prop42:codesG1G1botproperty}(ii) and \ref{prop42:Golay}(iii) with \eqref{eq42:SmallWeights} and \eqref{eq42:small weights m2}.
   \qedhere
    \end{description}
 \end{proof}


\section{The codes related to the ternary Golay code and the corresponding lifted ones as multiple coverings}\label{sec5:MultCover}

Multiple coverings are given by Definition \ref{def1:multcov}. Let $B_j(\ov,C)$ be as in Section \ref{sec1:Intro} and $C(i)$ be as in \eqref{eq22:C(i)}. The covering quality of an $[n,k,d]_{q}R$ $(R,\lambda)$-MCF code $C$ is characterized by its \emph{MCF-$\lambda$-density} $\gamma _{\lambda }(C,R,q)\ge1$ which is the average value of $B_R(\ov,C)$ divided by $\lambda$ where the average is calculated over all  $\ov\in C(R)$. For APMCF and PMCF codes we have $\gamma _{\lambda}(C,R,q)=1$.
From the covering problem point of view, the best codes are those with small MCF-$\lambda$-density.

\begin{lemma}\label{lem52:MCF&WDcosets}
Let $C$ be an $[n,k,d]_{q}R$ code. For a weight $R$ coset $\V^{(R)}(C)$ of $C$, let $\A_R$ be the number of vectors of Hamming weight $R$ belonging to this coset; in other words, $\A_R$ is the number of the coset leaders. Then the code $C$ is $(R,\lambda)$-MCF
 if for all weight $R$ cosets, forming $C(R)$ \eqref{eq22:C(i)}, we have
 $\A_R\ge\lambda$.  If for all such cosets $\A_R=\lambda$ then $C$ is an $(R,\lambda)$-APMCF code;
 moreover, if in addition $d\ge2R$ then $C$ is an  $(R,\lambda)$-PMCF code.
\end{lemma}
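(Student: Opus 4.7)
The plan is to reduce everything to the combinatorial fact in Theorem~\ref{th22:cosets}(iii), which transports the distance invariants $B_j(\ov,C)$ from individual vectors to the weight distribution of the coset containing~$\ov$. Once that identification is in place, the lemma becomes essentially a rephrasing of Definition~\ref{def1:multcov}.

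First I would fix a vector $\ov\in\F_q^{\,n}$ with $d(\ov,C)=R$. By \eqref{eq22:C(i)} such a vector belongs to $C(R)$, and hence sits inside a unique weight-$R$ coset $\V^{(R)}(C)$. Applying Theorem~\ref{th22:cosets}(iii) to this coset gives the identity $B_R(\ov,C)=\A_R(\V^{(R)}(C))$. Now I would observe that in a coset of weight exactly $R$, a vector of Hamming weight $R$ is the same thing as a coset leader (every coset leader has weight $R$ by definition, and no vector of the coset can have smaller weight since the coset weight is $R$). Therefore the quantity $\A_R$ in the statement is precisely $\A_R(\V^{(R)}(C))=B_R(\ov,C)$, and this common value depends only on the coset, not on the particular representative~$\ov$.

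With this identification the three assertions are immediate. If every weight-$R$ coset satisfies $\A_R\ge\lambda$, then for every $\ov\in C(R)$ we have $B_R(\ov,C)\ge\lambda$, which by Definition~\ref{def1:multcov} is exactly the $(R,\lambda)$-MCF property. If moreover $\A_R=\lambda$ for every weight-$R$ coset, then the inequality becomes an equality, giving the APMCF condition. Finally, if in addition $d\ge 2R$ holds, then the extra hypothesis required in Definition~\ref{def1:multcov} to upgrade APMCF to PMCF is satisfied, so $C$ is $(R,\lambda)$-PMCF.

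There is no real obstacle here: the proof is a direct bookkeeping exercise once one recognises that the number of coset leaders of a weight-$R$ coset is the same as $B_R(\ov,C)$ for any $\ov$ in that coset. The only point requiring a brief justification is the equivalence between \emph{coset leader of $\V^{(R)}$} and \emph{weight-$R$ vector in $\V^{(R)}$}, which follows immediately from the definition of coset weight.
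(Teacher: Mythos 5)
Your argument is correct and follows essentially the same route as the paper, whose one-line proof likewise invokes Definition~\ref{def1:multcov} together with Theorem~\ref{th22:cosets}(iii) (the identification $B_R(\ov,C)=\A_R(\V^{(R)}(C))$ for any $\ov$ in a weight-$R$ coset); you merely spell out the bookkeeping, including the observation that coset leaders of a weight-$R$ coset are exactly its weight-$R$ vectors.
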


\begin{proof}
  The assertions follow from Definition \ref{def1:multcov} and Theorem \ref{th22:cosets}(ii)(iii).
\end{proof}

\begin{theorem}\label{th52:RUWS=APMCF}
  Let $C$ be an $[n,k,d]_{q}R$ UPWS code. Then all its weight $R$ cosets have the same weight distribution and $C$ is an $(R,\A_R)$-APMCF code, where $\A_R$ is the number of vectors of Hamming weight $R$ belonging to any  weight $R$ coset of $C$; moreover, if in addition $d\ge2R$ then $C$ is an $(R,\A_R)$-PMCF code.
\end{theorem}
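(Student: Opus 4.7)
The proof will be a short chain combining the preceding results, so I would simply stitch them together rather than doing new work.

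The plan is to start from the UPWS hypothesis and translate it, via Theorem \ref{th1:UPWS nes-suf}, into the equality $R = R(C) = s(C^{\bot})$. This is the key structural fact, because it identifies the covering radius with the external distance, which is exactly the weight on which Theorem \ref{th22:cosets}(ii) operates. Applying that theorem, every coset $\V^{(R)}(C)$ of weight $R = s(C^{\bot})$ has the same weight distribution; in particular the number of weight-$R$ vectors in such a coset, i.e.\ the number of coset leaders, is a constant that does not depend on the chosen coset. Call this common value $\A_R$.

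Next I would invoke Lemma \ref{lem52:MCF&WDcosets}. By \eqref{eq22:C(i)}, the set $C(R)$ of deep holes decomposes as the disjoint union of all weight-$R$ cosets of $C$, and for any $\ov\in\V^{(R)}(C)$ we have $B_R(\ov,C)=\A_R(\V^{(R)}(C))$ by Theorem \ref{th22:cosets}(iii). Combined with the previous paragraph, this gives $B_R(\ov,C)=\A_R$ for every $\ov\in C(R)$, which is exactly the defining condition (Definition \ref{def1:multcov}) for $C$ to be an $(R,\A_R)$-APMCF code; equivalently, this is precisely the condition stated in Lemma \ref{lem52:MCF&WDcosets}.

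Finally, for the PMCF conclusion I would just note that if additionally $d\ge 2R$, then the extra hypothesis of the PMCF part of Definition \ref{def1:multcov} (equivalently, of Lemma \ref{lem52:MCF&WDcosets}) is satisfied, and the APMCF code is automatically $(R,\A_R)$-PMCF. There is no real obstacle in any of these steps: the whole proof is essentially a two-line deduction from Theorem \ref{th1:UPWS nes-suf}, Theorem \ref{th22:cosets}(ii)(iii), and Lemma \ref{lem52:MCF&WDcosets}. The only subtlety worth mentioning explicitly is the identification of $\A_R$ as a well-defined constant independent of the coset chosen, and this is precisely what Theorem \ref{th22:cosets}(ii) delivers once we know $R=s(C^{\bot})$.
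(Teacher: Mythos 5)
Your proposal is correct and follows exactly the paper's own route: Theorem \ref{th1:UPWS nes-suf} to get $R=s(C^\bot)$, Theorem \ref{th22:cosets}(ii)(iii) to identify a common weight distribution (hence a well-defined constant $\A_R$) for all weight-$R$ cosets, and Lemma \ref{lem52:MCF&WDcosets} to conclude the APMCF and, when $d\ge 2R$, the PMCF property. No gaps; the argument matches the paper's proof.
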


\begin{proof}
  By Theorem \ref{th1:UPWS nes-suf}, for the code $C$, we have $R=s(C^\bot)$. Now the assertions follow from Theorem~\ref{th22:cosets}(ii), citing \cite[Theorem 7.5.2]{HufPless}, and Lemma \ref{lem52:MCF&WDcosets}.
\end{proof}

\begin{example}\label{ex52}
For the codes, considered in Section \ref{sec4:newUPWS}, Tables \ref{tab5:MultCov1} and \ref{tab5:MultCov2} show the results of computer search performed using Magma \cite{Magma}, connected with multiple coverings. The calculations for $\G_{12,3}$ and $\G_{11,3}$ are not finished but they are interesting because they show that these codes are not APMCF.

We use the following notations: $s^\bot=s(C^\bot)$ is as in \eqref{eq1:support non-zero weights}; $b_R$ is the number distinct types of weight distributions of weight $R$ cosets; $\A_R$ and $\A_{R+1}$ are the number of vectors of Hamming weight $R$ and $R+1$ in weight $R$ cosets of the given type; $N$ is the number of weight $R$ cosets of the given type; $\lambda$ is the multiplicity of covering; $\gamma_\lambda$ is the MCF-$\lambda$-density;  $(R,\lambda )$-MCF, $(R,\lambda )$-PMCF, and $(R,\lambda )$-APMCF are as in Definition \ref{def1:multcov}.  In the column ``U'', $\bullet$
notes that the code $C$  is UPWS, whereas $\blacktriangledown$ says that it  is not UPWS.

\begin{table}[h]
\caption{Codes $\G_{12,m}$, $\G_{11,m}$, $\G_{11,m}^\bot$, $m\ge1$, as multiple coverings}
\centering
  \begin{tabular}{c|c|c|c|r|r|r|c|c|l}
    \hline
    code $C$&$s^\bot$&U&$b_R$&$\A_R$&$\A_{R+1}$&$N~~$&$\lambda$&$\gamma_\lambda$&$(R,\lambda)$-...MCF \\\hline

    $\G_{12,1}=[12,6,6]_33$&3&$\bullet$&1&4&9&440&4&1&(3,4)-PMCF \\\hline

     $\G_{12,2}=[12,6,6]_94$&6&$\blacktriangledown$&4&3&48&63360&3&1.28&(4,3)-MCF \\
     &&&    &3&54&190080&&\\
     &&&    &5&44&142560&&\\
     &&&    &6&48&23760&&\\\hline

     $\G_{12,3}=[12,6,6]_{27}5$&6&$\blacktriangledown$&$\ge3$&18&747&&$\le18$&$>1$&$(5,\le18)$-MCF\\
     &&&    &24&732&&&\\
     &&&    &36&702&&&&not finished\\\hline\hline

    $\G_{11,1}=[11,6,5]_32$&2&$\bullet$&1&1&6&220&1&1&(2,1)-PMCF \\\hline

    $\G_{11,2}=[11,6,5]_94$&5&$\blacktriangledown$&1&30&240&7920&30&1&(4,30)-APMCF \\\hline

    $\G_{11,3}=[11,6,5]_{27}4$&5&$\blacktriangledown$&$\ge4$&5&388&2160&$\le5$&$>1$&$(4,\le5)$-MCF\\
    &&&    &10&380&6480&&\\
    &&&    &15&372&3900&&\\
     &&&    &30&348& 291&&&not finished\\\hline\hline

        $\G_{11,1}^\bot=[11,5,6]_35$&5&$\bullet$&1&66&0&2&66&1&(5,66)-APMCF \\\hline

    $\G_{11,2}^\bot=[11,5,6]_95$&7&$\blacktriangledown$&5&21&270&5280&21&1.48&(5,21)-MCF\\
                              &&&    &30&216&528&&\\
                              &&&    &30&243&10560&&\\
                              &&&    &33&234&31680&&\\
                              &&&    &66&0&8&&\\\hline
        \end{tabular}
  \label{tab5:MultCov1}
\end{table}

\begin{table}[h]
\caption{Codes $\G_{n,m}$, $\G_{n,m}^\bot$, $n=10,9,8$, $m\ge1$,  as multiple coverings}
\centering
  \begin{tabular}{c|c|c|c|r|r|r|c|c|l}
    \hline
    code $C$&$s^\bot$&U&$b_R$&$\A_R$&$\A_{R+1}$&$N~~$&$\lambda$&$\gamma_\lambda$&$(R,\lambda)$-...MCF \\\hline

    $\G_{10,1}=[10,6,4]_32$&2&$\bullet$&1&3&12&60&3&1&(2,3)-PMCF \\\hline

    $\G_{10,2}=[10,6,4]_93$&4&$\blacktriangledown$&2&8&142&2160&8&1.24&(3,8)-MCF \\
                                 &&&&12&123&1920&&\\\hline

    $\G_{10,3}=[10,6,4]_{27}4$&4&$\bullet$&1&180&5724&112320&180&1&(4,180)-APMCF \\\hline\hline

    $\G_{10,1}^\bot=[10,4,6]_35$&7&$\blacktriangledown$&1&36&0&4&36&1&(5,36)-APMCF \\\hline

    $\G_{10,2}^\bot=[10,4,6]_96$&7&$\blacktriangledown$&1&180&360&48&180&1&(6,180)-APMCF \\\hline\hline

    $\G_{9,1}=[9,6,3]_32$&2&$\bullet$&1&9&18&8&9&1&(2,9)-APMCF \\\hline

    $\G_{9,2}=[9,6,3]_93$&3&$\bullet$&1&72&702&48&72&1&(3,72)-APMCF \\\hline

    $\G_{9,3}=[9,6,3]_{27}3$&3&$\bullet$&1&72&2970&11856&72&1&(3,72)-APMCF \\\hline\hline

    $\G_{9,1}^\bot=[9,3,6]_35$&7&$\blacktriangledown$&1&18&0&6&18&1&(5,18)-APMCF \\\hline

    $\G_{9,2}^\bot=[9,3,6]_96$&7&$\blacktriangledown$&1&72&108&144&72&1&(6,72)-APMCF \\\hline\hline

    $\G_{8,1}=[8,6,2]_31$&1&$\bullet$&1&2&13&8&2&1&(1,2)-PMCF \\\hline

    $\G_{8,2}=[8,6,2]_92$&2&$\bullet$&1&24&360&48&24&1&(2,24)-APMCF \\\hline

    $\G_{8,3}=[8,6,2]_{27}2$&2&$\bullet$&1&24&1368&624&24&1&(2,24)-APMCF \\\hline\hline

    $\G_{8,1}^\bot=[8,2,6]_{3}5$&7&$\blacktriangledown$&1&8&0&16&8&1&(5,8)-APMCF \\\hline

    $\G_{8,2}^\bot=[8,2,6]_{9}6$&7&$\blacktriangledown$&1&24&24&1344&24&1&(6,24)-APMCF \\\hline
    \end{tabular}
  \label{tab5:MultCov2}
\end{table}
\end{example}

\begin{theorem}\label{th52:Dj}
   \begin{description}
    \item[(i)] All the UPWS NMDS codes $\G_{n,m}$ of the five infinite families $\Ds_1$, $\Ds_2$, $\Ds_3$, $\Ds_4$, $\Ds_5$ are also APMCF codes; moreover some $\G_{n,1}$ codes are PCMF. We have\\
   $\Ds_1$: $\G_{12,1}=[12,6,6]_{3}3$ is $(3,4)$-PMCF,\\
     \phantom{$\Ds_1$: }$\G_{12,m}=[12,6,6]_{3^m}6$ is $(6,\lambda_{12,m})$-APMCF, $m\ge6$;\\
  $\Ds_2$: $\G_{11,1}=[11,6,5]_{3}2$  is $(2,1)$-PMCF,\\ \phantom{$\Ds_2$: }$\G_{11,m}=[11,6,5]_{3^m}5$ is $(5,\lambda_{11,m})$-APMCF, $m\ge4$;\\
    $\Ds_3$: $\G_{10,1}=[10,6,4]_{3}2$ is $(2,3)$-PMCF,\\
    \phantom{$\Ds_3$: }$\G_{10,m}=[10,6,4]_{3^m}4$ is $(4,\lambda_{10,m})$-APMCF, $m\ge3$, $\lambda_{10,3}=180$;\\
  $\Ds_4$: $\G_{9,1}=[9,6,3]_{3}2$ is $(2,9)$-APMCF,\\
   \phantom{$\Ds_4$: }$\G_{9,m}=[9,6,3]_{3^m}3$ is $(3,\lambda_{9,m})$-APMCF, $m\ge2$, $\lambda_{9,2}=\lambda_{9,3}=72$;\\
  $\Ds_5$: $\G_{8,1}=[8, 6, 2]_{3}1$ is $(1,2)$-PMCF,\\
    \phantom{$\Ds_5$: }$\G_{8,m}=[8,6,2]_{3^m}2$ is $(2,\lambda_{8,m})$-APMCF, $m\ge2$, $\lambda_{8,2}=\lambda_{8,3}=24$.

    \item[(ii)]  There are the following $(R,\A_R)$-APMCF codes which are not UPWS:\\
     $\G_{11,2}=[11,6,5]_{9}4$  is $(4,30)$-APMCF;\\
      $\G_{10,1}^\bot=[10,4,6]_35$ is $(5,36)$-APMCF, $\G_{10,2}^\bot=[10,4,6]_96$ is $(6,180)$-APMCF; \\
      $\G_{9,1}^\bot=[9,3,6]_35$ is $(5,18)$-APMCF, $\G_{9,2}^\bot=[9,3,6]_96$ is $(6,72)$-APMCF;\\
     $\G_{8,1}^\bot=[8,2,6]_{3}5$ is $(5,8)$-APMCF, $\G_{8,2}^\bot=[8,2,6]_{9}6$ is $(6,24)$-APMCF.
\end{description}
 \end{theorem}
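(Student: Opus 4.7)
The plan is to deduce part (i) almost entirely from Theorem~\ref{th52:RUWS=APMCF}, and to handle part (ii) by reading off the relevant coset data already collected in Example~\ref{ex52} (Tables~\ref{tab5:MultCov1}, \ref{tab5:MultCov2}).

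For part (i), each of the five families $\Ds_1,\ldots,\Ds_5$ was shown to consist of UPWS codes in Theorems~\ref{th41:UPWSliftGolay}(i) and \ref{th42:UPWSliftGolay}(i). Hence Theorem~\ref{th52:RUWS=APMCF} applies to every code in these families: each is an $(R,\A_R)$-APMCF code, where $\A_R$ is the common number of coset leaders in any weight-$R$ coset. The concrete multiplicities $\lambda_{n,m}$ for the small cases ($\lambda_{10,3}=180$, $\lambda_{9,2}=\lambda_{9,3}=72$, $\lambda_{8,2}=\lambda_{8,3}=24$, and the $m=1$ cases) come from the computations summarized in Tables~\ref{tab5:MultCov1} and~\ref{tab5:MultCov2}. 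To upgrade some of these to PMCF, I will check the condition $d\ge 2R$ from Definition~\ref{def1:multcov} for the $m=1$ codes:
$\G_{12,1}=[12,6,6]_{3}3$ has $d=6=2R$;
$\G_{11,1}=[11,6,5]_{3}2$ has $d=5>2R=4$;
$\G_{10,1}=[10,6,4]_{3}2$ has $d=4=2R$;
$\G_{8,1}=[8,6,2]_{3}1$ has $d=2=2R$.
All four codes therefore satisfy $d\ge 2R$, so they are PMCF, with the multiplicities read from the tables ($\lambda=4,1,3,2$ respectively). The code $\G_{9,1}=[9,6,3]_{3}2$ has $d=3<2R=4$, so the stronger PMCF conclusion fails and we retain only the APMCF property. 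For the remaining codes of the families ($m\ge 2$ in $\Ds_3,\Ds_4,\Ds_5$ and $m\ge 4, m\ge 6$ in $\Ds_2,\Ds_1$), the packing radius $\lfloor(d-1)/2\rfloor$ is strictly less than the respective $R$, and also $d<2R$; so APMCF but not PMCF is the correct conclusion, consistent with the statement.

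For part (ii), Theorem~\ref{th52:RUWS=APMCF} cannot be invoked, because by Theorems~\ref{th41:UPWSliftGolay}(ii) and \ref{th42:UPWSliftGolay}(iii) the listed codes ($\G_{11,2}$, $\G_{n,1}^\bot$, $\G_{n,2}^\bot$ for $n=10,9,8$) are not UPWS. Instead I appeal to Lemma~\ref{lem52:MCF&WDcosets}: a code is $(R,\lambda)$-APMCF precisely when every weight-$R$ coset contains exactly $\lambda$ coset leaders. For each code in the list, Tables~\ref{tab5:MultCov1} and~\ref{tab5:MultCov2} display $b_R=1$, i.e.\ a single type of weight-$R$ coset, with the number of leaders $\A_R$ equal to $36, 180, 18, 72, 8, 24, 30$ respectively. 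Lemma~\ref{lem52:MCF&WDcosets} then gives the APMCF property with the stated multiplicity $\lambda=\A_R$.

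The main obstacle is part (ii): since UPWS does not hold, the constancy of $\A_R$ across all weight-$R$ cosets is not automatic and must be verified, and the present argument relies on the computer-aided coset enumeration recorded in Example~\ref{ex52}. For each of the seven exceptional codes the Magma search yields $b_R=1$, which is exactly what the hypothesis of Lemma~\ref{lem52:MCF&WDcosets} requires; everything else is a straightforward bookkeeping of the parameters $(R,\lambda)$.
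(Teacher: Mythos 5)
Your proposal is correct and follows essentially the same route as the paper: part (i) via Theorem~\ref{th52:RUWS=APMCF} applied to the UPWS families from Theorems~\ref{th41:UPWSliftGolay}(i) and \ref{th42:UPWSliftGolay}(i), with the concrete multiplicities and the non-UPWS cases of part (ii) taken from the Magma coset data in Tables~\ref{tab5:MultCov1} and \ref{tab5:MultCov2} (the paper cites the tables directly, which amounts to your appeal to Lemma~\ref{lem52:MCF&WDcosets} with $b_R=1$). Your explicit check of the $d\ge 2R$ condition for the $m=1$ codes, including the failure for $\G_{9,1}$, is just a spelled-out version of what the paper leaves implicit.
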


\begin{proof}
  The assertions follow from Theorem \ref{th52:RUWS=APMCF} and Tables \ref{tab5:MultCov1}, \ref{tab5:MultCov2} .
\end{proof}

\begin{remark}\label{rem52}
\begin{description}
  \item[(i)]
By Theorem \ref{th52:RUWS=APMCF}, if a code is UPWS, then $b_R=1$. However, Tables \ref{tab5:MultCov1}, \ref{tab5:MultCov2} show that for many codes $b_R=1$ even if the code  is not UPWS.

  \item[(ii)]  By Table \ref{tab5:MultCov2} and Theorem \ref{th52:Dj}, we have  $\lambda_{9,2}=\lambda_{9,3}=72$,
  $\lambda_{8,2}=\lambda_{8,3}=24$, and $\lambda_{10,3}=180$. This gives rise to conjecture that for $m\ge2$, $\Ds_4$ and $\Ds_5$ are infinite families of $(3,72)$-APMCF and $(2,24)$-APMCF codes, respectively, and similarly, for $m\ge3$, $\Ds_3$ is infinite family of $(4,180)$-APMCF codes.

  \item[(iii)]  By Table \ref{tab5:MultCov2} and Theorem \ref{th52:Dj}, we have\\
    $\G_{9,2}=[9,6,3]_93$ is $(3,72)$-APMCF, $\G_{9,2}^\bot=[9,3,6]_96$ is $(6,72)$-APMCF;\\
     $\G_{8,2}=[8,6,2]_{9}2$ is $(2,24)$-APMCF, $\G_{8,2}^\bot=[8,2,6]_{9}6$ is $(6,24)$-APMCF;\\
       $\G_{10,3}=[10,6,4]_{27}4$ is $(4,180)$-APMCF; $\G_{10,2}^\bot=[10,4,6]_96$ is $(6,180)$-APMCF.\\
       This gives rise to the conjecture that for codes  $\G_{n,m}$ and $\G_{n,m}^\bot$ the values of $\lambda_{n,m}$ and $\lambda_{n,m}^\bot$ are equal to each other; similarly to the number of codewords of minimal weight for an NMDS code and its dual, see Theorem \ref{th25:NMDS}(iv).
\end{description}
\end{remark}

\section{Conclusions and open problems}\label{sec6:conclusion}
\subsection{New infinite families of near-MDS (NMDS) uniformly \\packed in the wide sense (UPWS) codes}
\begin{itemize}
  \item As far as it is known to the authors, the infinite families of the $m$-lifted codes $\Ds_1$, $\Ds_2$, $\Ds_3$, $\Ds_4$, $\Ds_5$ from Theorems~\ref{th41:UPWSliftGolay}(i) and \ref{th42:UPWSliftGolay}(i), apart for their 1-st members with $m=1$, have not been described in the literature as NMDS codes uniformly packed in the wide sense.
 Similarly, the codes from Theorem \ref{th42:UPWSliftGolay}(ii) have not been noted in the literature as complete regular (CR) or 2-regular. Also, it is interesting that the code  of \cite[p.\ 24, (S.13)]{BorgRiZin2019PIT}
 coincides with dual ternary Golay code, see Theorem~\ref{th41:UPWSliftGolay}(iii).

\noindent\textbf{Open problem.} For the codes $\G_{12,m}=[12,6,6]_{3^m}R_{12,m}$, $m=4,5$, find the values $R_{12,m}$ and understand if these codes are UPWS.

  \item Distinct combinatorial properties of codes from families $\Ds_j$  and their dual ones, which are described and used in Sections \ref{sec3:useful} and \ref{sec4:newUPWS}, not only helped us to prove needed lemmas, propositions, and theorems, but are also of independent interest. In particular, the following can be noted: the results on the number of codewords of
      weight $d$ and $d+1$, see Theorems~\ref{th41:UPWSliftGolay}(iv)(v) and \ref{th42:UPWSliftGolay}(iv)(v); the weight distribution, the support $S(C)$ of the set of non-zero weights, and its cardinality $s(C)$ for distinct codes $C$, see \eqref{eq1:support non-zero weights},  Propositions \ref{prop41:Golay}(iii), \ref{prop42:codesG1G1botproperty}(iii),
      \ref{prop42:Golay}(iv), and the corresponding Proofs; covering radii for distinct codes, see Propositions \ref{prop41:Golay}(ii), \ref{prop42:codesG1G1botproperty}(i), \ref{prop42:Golay}(ii).

  \item We essentially used the assertion of \cite[Theorem 1]{BasZin} that a code $C$ of covering radius $R(C)$ is UPWS if and only if $R(C)=s(C^\bot)$, see \eqref{eq1:support non-zero weights} and Theorem \ref{th1:UPWS nes-suf}. To check if the last equality holds for specific codes and code families, we interpret codes, their dual and $m$-lifted ones as NMDS codes. As far as it is known to the authors, the approach, connected with NMDS codes, is new for the codes considered in this paper. As a result, we obtained many new useful relations and assertions, see e.g. the formula of the weight distribution of an $m$-lifted NMDS code \eqref{eq3:weight distrib NMDS}, Theorem \ref{th3:NMDS-Adp1=0-UPWS} (general theorem on infinite families of UPWS NMDS codes) and so on. The properties of NMDS codes, presented in Theorem \ref{th25:NMDS}, are used in the proofs of almost all lemmas, propositions, and theorems in Sections \ref{sec3:useful} and~\ref{sec4:newUPWS}.

  \item The natural generalization of the lower bound \cite{DavTombWordsMinWeght} on the number of codewords of minimal weight $d$ in Lemma \ref{lem3:LowBndAd} increases the list of codes achieving a bound that is useful for this work.
\end{itemize}

\subsection{New almost perfect multiple covering of the farthest-off points ($(R,\lambda)$-APMCF codes)}
\begin{itemize}
  \item As far as it is known to the authors, the infinite families of the NMDS UPWS codes $\Ds_1$, $\Ds_2$, $\Ds_3$, $\Ds_4$, $\Ds_5$, have not been described in the literature as almost perfect multiple covering of the farthest-off points ($(R,\lambda)$-APMCF codes), cf. Theorem \ref{th52:Dj}(i). The sporadic examples in Theorem \ref{th52:Dj}(ii) also seem new.

\noindent\textbf{Open problem.} By a theoretic way, find values of $\lambda_{n,m}$ for the codes of infinite families $\Ds_j$ considered as multiple coverings, see Theorem \ref{th52:Dj}(i).

  \item  Remark \ref{rem52} is both interesting and useful; conjectures it contains appear reasonable though somewhat unexpected.

\noindent\textbf{Open problem.} Prove the conjectures stated in Remark \ref{rem52}(ii)(iii).
\end{itemize}


\begin{thebibliography}{99}
\bibitem{Barg}
A. Barg, At the dawn of the theory of codes, \emph{Math. Intelligencer}, \textbf{15}(1), 20--26, (1993)
\url{https://doi.org/10.1007/BF03025254}

\bibitem{BDGMP_MultCov}
D. Bartoli, A.A. Davydov, M. Giulietti, S. Marcugini, F. Pambianco,
Multiple coverings of the farthest-off points with small density from projective geometry,
\emph{Adv. Math. Commun.}, \textbf{9}(1), 63--85 (2015)
\url{https://doi.org/10.3934/amc.2015.9.63}

\bibitem{BDGMP_MultCovFurth}
D. Bartoli, A.A. Davydov, M. Giulietti, S. Marcugini, F. Pambianco,
Further results on multiple coverings of the farthest-off points,
\emph{Adv. Math. Commun.}, \textbf{10}(3), 613--632 (2016)
\url{https://doi.org/10.3934/amc.2016030}

\bibitem{BasZaiZinUP}
L.A. Bassalygo, G.V. Zaitsev, V.A. Zinoviev, Uniformly packed codes, \emph{Probl. Inform. Transmission},
\textbf{10}(1), 6--9 (1974)

\bibitem{BasZin}
L.A. Bassalygo, V.A. Zinoviev, Remark on uniformly packed codes, \emph{Probl. Inform. Transmission}, \textbf{13}(3), 178--180 (1977)

\bibitem{BoerAMDS}
M.A. De Boer, Almost MDS codes, \emph{Des. Codes Cryptogr.}, \textbf{9}(2), 143--155 (1996)\\
\url{https://doi.org/10.1007/BF00124590}

\bibitem{BorgRiZin12010}
J. Borges, J. Rifa, V.A. Zinoviev, On $q$-ary linear completely regular codes with $\rho=2$ and antipodal dual,
 \emph{Adv. Math. Commun.}, \textbf{4}(4), 567--578 (2010)\\
\url{https://doi.org/10.3934/amc.2010.4.567}

\bibitem{BorgRiZin2019PIT}
J. Borges, J. Rifa, V.A. Zinoviev, On completely regular codes, \emph{Probl. Inform. Transmission},
\textbf{55}(1), 1--45 (2019)
\url{https://doi.org/10.1134/S0032946019010010}

 \bibitem{Magma} W. Bosma, J. Cannon, C. Playoust, The Magma algebra system. I.
 The user language, \emph{J. Symbolic Comput.}, \textbf{24}(3-4), 235--265 (1997)\\
 \url{https://doi.org/10.1006/jsco.1996.0125}

\bibitem{Handbook-coverings}
R.A. Brualdi, S. Litsyn, V. Pless, Covering radius. In: V.S. Pless,
 W.C. Huffman (eds.) Handbook of Coding Theory, vol. 1, pp. 755--826.
  Elsevier, Amsterdam, The Netherlands (1998)

\bibitem{CameronvanLint}
P.J. Cameron, J.H. van Lint, Graph Theory, Coding Theory and Block Designs, Cambridge Univ. Press, Cambridge,
2-nd edn. (1975)

\bibitem{CHLL-bookCovCod}
G. Cohen, I. Honkala, S. Litsyn, A. Lobstein, Covering Codes, North-Holland
  Mathematical Library, vol. 54. Elsevier, Amsterdam, The Netherlands (1997)

\bibitem{DGMP-AMC}
A.A. Davydov, M. Giulietti, S. Marcugini, F.~Pambianco,  Linear nonbinary
  covering codes and saturating sets in projective spaces, \emph{Adv.
  Math. Commun.}  \textbf{5}(1),  119--147 (2011) \url{https://doi.org/10.3934/amc.2011.5.119}

\bibitem{DMP-AMC-Cosets}
A.A. Davydov, S. Marcugini, F. Pambianco, On the weight distribution of the cosets of MDS codes,
 \emph{Adv. Math. Commun.}, \textbf{17}(5), 1115--1138 (2023)\\
 \url{https://doi.org/10.3934/amc.2021042}

 \bibitem{DavTombWordsMinWeght}
 A.A. Davydov, L.M. Tombak, Number of minimal-weight words in block codes, \emph{Problems Inform.
Transmiss.}, \textbf{24}(1), 7--18 (1988)\\
 \url{https://www.researchgate.net/publication/268716136}

\bibitem{Delsarte}
P. Delsarte, Four fundamental parameters of a code
and their combinatorial significance, \emph{Inform. Control}, \textbf{23}(5), 407--438 (1973)\\
 \url{https://doi.org/10.1016/S0019-9958(73)80007-5}

\bibitem{DingTangDesignsNMDS2020IEEE}
C. Ding, C. Tang, Infinite families of near MDS codes
holding $t$-designs, \emph{IEEE Trans. Inform. Theory}, \textbf{66}(9), 5419--5428
(2020)\\
 \url{https://doi.org/10.1109/TIT.2020.2990396}

\bibitem{DodunLandgNMDS1995}
S. Dodunekov, I. Landgev, On near-MDS codes, \emph{J. Geom.}, \textbf{54}(1), 30–-43 (1995)
\url{https://doi.org/10.1007/BF01222850}

\bibitem{DodunLandgNMDS2000}
 S. Dodunekov, I. Landjev, Near-MDS codes over some small fields, Discrete
Math. \textbf{213}(1--3), 55--65  (2000)
\url{https://doi.org/10.1016/S0012-365X(99)00168-5}

\bibitem{GCR2021IEEE-EFS} D. Elimelech, M. Firer,  M. Schwartz, The generalized covering radii of
linear codes,  \emph{IEEE Trans. Inform. Theory}, \textbf{67}(12), 8070--8085 (2021)\\
 \url{https://doi.org/10.1109/TIT.2021.3115433}

\bibitem{GCR2022IEEE-EWS} D. Elimelech, H. Wei, M. Schwartz, On the generalized covering radii of
 Reed-Muller codes, \emph{IEEE Trans. Inform. Theory}, \textbf{68}(7), 4378--4391 (2022)\\
 \url{https://doi.org/10.1109/TIT.2022.3158762}

\bibitem{FaldWillSmallDef1997}
A. Faldum, W. Willems, Codes of small defect, \emph{Des. Codes Cryptogr.}, \textbf{10}(3), 341--350 (1997)
\url{https://doi.org/10.1023/A:1008247720662}

\bibitem{GoethTilb1975UP}
J.-M. Goethals, H.C.A. van Tilborg, Uniformly packed codes, \emph{Philips Res. Rep.}, \textbf{30},
9--36 (1975)

\bibitem{Golay1949}
M.J.E. Golay, Notes on digital coding,  \emph{Proc. IRE}, \textbf{37}(6), p. 657, (1949)\\
\url{https://doi.org/10.1109/JRPROC.1949.233620}


\bibitem{HamalHonkLitsOster}
H. H\"{a}m\"{a}l\"{a}inen, I. Honkala, S. Litsyn, P.  \"{O}sterg{\aa}rd, Football pools -- a game for mathematicians,
\emph{Amer. Math. Monthly}, \textbf{102}(7), 579--588 (1995)\\
\url{https://doi.org/10.2307/2974552}

\bibitem{HamalRankGolay}
H. H\"{a}m\"{a}l\"{a}inen, S. Rankinen, Upper bounds for football
pool problems and mixed covering codes, \emph{J. Comb. Theory, Ser. A}, \textbf{56}(1), 84--95 (1991)\\
\url{https://doi.org/10.1016/0097-3165(91)90024-B}

\bibitem{HufPless}
W.C. Huffman, V.S. Pless, Fundamentals of Error-Correcting Codes, Cambridge Univ. Press, Cambridge (2003)

\bibitem{JurPelCoset2015}
R. Jurrius, R. Pellikaan, The coset leader and list weight enumerator, In: Topics in Finite Fields 11-th Int. Conf. Finite Fields and their Applications, G.M. Kyureghyan, G.L. Mullen, A. Pott (Eds.) AMS, Contemporary Mathematics Series, vol. 632, pp. 229--252, 2015
\url{https://www.win.tue.nl/~ruudp/paper/71.pdf}

\bibitem{LandRousNMDS2015}
I. Landjev, A. Rousseva, The main conjecture for near-MDS codes, in: WCC2015 - 9th Int.
Workshop Coding Cryptogr. 2015, Anne Canteaut, Ga\"{e}tan Leurent, Maria
Naya-Plasencia, Apr 2015, Paris, France. hal-01276222\\
\url{https://inria.hal.science/hal-01276222/document}
\bibitem{MWS}
F.J. MacWilliams, N.J.A. Sloane, The Theory of Error-Correcting Codes, 3-rd edn., North-
Holland, Amsterdam, The Netherlands (1981)

\bibitem{NeumaierCR}
A. Neumaier, Completely regular codes, \emph{Discrete math.}
\textbf{106/107}, 353–-360, (1992)
\url{https://doi.org/10.1016/0012-365X(92)90565-W}

\bibitem{RiZinLiftPerf}
J. Rifa, V.A. Zinoviev, On lifting perfect codes, \emph{IEEE Trans. Inform. Theory}, \textbf{57}(9), 5918--5925 (2011)
\url{https://doi.org/10.1109/TIT.2010.2103410}

\bibitem{Roth}
 R.M. Roth, Introduction to Coding Theory,
Cambridge, Cambridge Univ. Press, Cambridge (2006)

\bibitem{SemZinovZaitsUP}
 N.V. Semakov, V.A. Zinoviev, G.V. Zaitsev, Uniformly packed codes, \emph{Problems Inform.
Transmiss.}, \textbf{7}(1), 30--39 (1971)

\bibitem{ThompsonGolay}
T.M. Thompson, From Error-Correcting Codes through Sphere Packings to Simple Groups,
The Mathematical Association of America, Washington, DC (1983)

\bibitem{TilborgThes}
H.C.A. van Tilborg, Uniformly Packed Codes, Thesis, Eindhoven University of Technology (1976)
\url{https://research.tue.nl/files/1736336/162111.pdf}

\bibitem{WeeCoLit}
G.J.M. van Wee, G.D. Cohen, S.N. Litsyn, A note on perfect multiple coverings of the
Hamming space, \emph{IEEE Trans. Inform. Theory}, \textbf{37}(3), 678--682 (1991)\\
\url{https://doi.org/10.1109/18.79931}
 \end{thebibliography}
\end{document}